\theoremstyle{plain}
\newtheorem{thm}{Theorem}[section]
\newtheorem{lem}[thm]{Lemma}
\newtheorem{cor}[thm]{Corollary}
\newtheorem{prop}[thm]{Proposition}
\theoremstyle{definition}
\newtheorem{defi}[thm]{Definition}
\newtheorem{example}[thm]{Example}
\theoremstyle{remark}
\newtheorem{rem}[thm]{Remark}
\DeclareMathOperator{\diag}{diag}
\DeclareMathOperator{\GL}{GL}
\DeclareMathOperator{\lcm}{lcm}
\DeclareMathOperator{\Mat}{Mat}
\DeclareMathOperator{\rank}{rank}
\DeclareMathOperator{\supp}{supp}
\DeclareMathOperator{\wt}{wt}
\newcommand{\abs}[1]{\left\lvert #1 \right\rvert}
\numberwithin{equation}{section}
\title{Periodicity of weight enumerators for codes generated by\\ an integral matrix}
\author[1]{Koji Imamura\thanks{k-imamura@kumamoto-u.ac.jp}}
\author[2]{Norihiro Nakashima\thanks{nakashima@nitech.ac.jp}}
\author[3]{Takuya Saito\thanks{saito@icredd.hokudai.ac.jp}}
\affil[1]{Research and Education Institute for Semiconductors and Informatics, Kumamoto University}
\affil[2]{Department of Mathematics, Nagoya Institute of Technology}
\affil[3]{Institute for Chemical Reaction Design and Discovery, Hokkaido University}
\date{}
\begin{document}

\maketitle

\begin{abstract}
In the theory of error-correcting codes, the minimum weight and the weight enumerator play a crucial role in evaluating the error-correcting capacity.
In this paper, by viewing the weight enumerator as a quasi-polynomial, we reduce the calculation of the minimum weight to that of a code over a smaller integer residue ring.
We also give a transformation formula between the Tutte quasi-polynomial and the weight enumerator.
Furthermore, we compute the number of maximum weight codewords for the codes related to the matroids $N_k$ and $Z_k$.
This is equivalent to computing the characteristic quasi-polynomial of the hyperplane arrangements related to $N_k$ and $Z_k$.

\noindent
\textbf{Keywords:}
Error-correcting code, Hyperplane arrangement, Weight enumerator, Minimum weight, Characteristic quasi-polynomial, Tutte quasi-polynomial
\vspace{2mm}

\noindent
\textbf{2020 Mathematics Subject Classification:}
Primary 68P30, Secondary 52C35.
\end{abstract}

\section{Introduction}
The minimum weight of an error-correcting code is one of its most fundamental invariants.
It determines the error-detecting and -correcting capabilities of the code, and plays a central role in decoding theory and performance analysis.
However, computing the minimum weight is known to be a computationally hard problem in general, and any structural insight leading to a reduction of the amount of computation is required.

As Jurrius and Pellikaan summarize in \cite{JP2013}, there is a strong relationship between hyperplane arrangements and error-correcting codes.
For example, the number of codewords whose entries are all nonzero is given by the characteristic polynomial of the hyperplane arrangement associated with the code.
In the theory of hyperplane arrangements, the study of quasi-polynomial structures originated in the work of Kamiya, Takemura, and Terao \cite{KTT08,KTT11}, who introduced the notion of the characteristic quasi-polynomial as a quasi-polynomial refinement of the characteristic polynomial.
Since then, the subject has developed rapidly, with recent works by Higashitani, Tran, and Yoshinaga~\cite{HTY2023} focusing on periods of characteristic quasi-polynomials, including a detailed study of minimum periods and the associated lcm periods.
This work is closely related to the study of the poset of layers of the toric arrangement. In particular, following the work by Moci \cite{Moci2012}, it was shown that the last constituent of the characteristic quasi-polynomial is the characteristic polynomial of the toric arrangement by Liu, Tran, and Yoshinaga \cite{Liu-Tran-Yoshinaga21}

Recent research on the relationship between hyperplane arrangements and error-correcting codes includes \cite{KNT2025}, which shows that the coboundary quasi-polynomial (equivalent to the weight enumerator up to multiplication by a constant) is a common refinement of the coboundary polynomial and the characteristic quasi-polynomial.
Motivated by the work of Kamiya, Takemura, and Terao, we construct a quasi-polynomial analog of the weight enumerator for error-correcting codes. We then apply this structure to reduce the determination of the minimum weight over $\mathbb{Z}/q\mathbb{Z}$ to computations over smaller moduli.

We denote by $C_G(q)$ the code over $\mathbb{Z}/q\mathbb{Z}$ with generator matrix $G$, where $q$ is a positive integer.
The minimum weight (or minimum Hamming weight) of a code is the smallest Hamming weight among its nonzero codewords.
We denote the minimum weight of the code $C_G(q)$ by $d_q$.
The notation appearing in Theorem \ref{thm-sufficient-necessary-mindist} will be defined in detail in Section \ref{sec-preliminaries}.
The following is the main result of this paper.
\begin{thm}\label{thm-sufficient-necessary-mindist}
    Let $G$ be a $k\times n$ matrix whose entries are integers, and let $\rho_0$ be the lcm period of $G$.
    Let $e_1|e_2|\cdots|e_{r}$ be the elementary divisors of $G$, where $r$ is the rank of $G$.
    Let $m\in\mathbb{Z}_{>0}$ be a divisor of $\rho_0$.
    Then the following hold.
    \begin{itemize}
        \item[\textup{(1)}] Let $m_0 := \min\{q\in\mathbb{Z}\mid q>1,\ \gcd(q,\rho_0)=1\}$. Then for any integer $q>1$ with $\gcd(q,\rho_0)=1$, we have $d_q=d_{m_0}$.
        \item[\textup{(2)}] If $m\geq 2$ and the integers $m$ and $e_r$ are relatively prime, then $d_q=d_m$ for any $q\in\mathbb{Z}_{>0}$ with $\gcd(q,\rho_0)=m$.
        \item[\textup{(3)}] If $d_q=d_m$ for any $q\in\mathbb{Z}_{>0}$ with $\gcd(q,\rho_0)=m$, then $m\nmid e_1$, i.e., $m$ does not divide $e_1$.
    \end{itemize}
\end{thm}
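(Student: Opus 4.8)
The plan is to reduce $d_q$ to a combinatorial invariant of the column set of $G$ and then analyze it via Smith normal forms. Write $g_1,\dots,g_n$ for the columns of $G$. A codeword of $C_G(q)$ has the form $xG$ with $x\in(\mathbb{Z}/q\mathbb{Z})^k$, and $\wt(xG)=n-\#\{j\mid x\cdot g_j=0\ \text{in}\ \mathbb{Z}/q\mathbb{Z}\}$. For $S\subseteq\{1,\dots,n\}$ let $G_S$ be the submatrix of $G$ with columns indexed by $S$, and set
\[
  \delta_q(S):=\#\{x\in(\mathbb{Z}/q\mathbb{Z})^k\mid x\cdot g_j=0\ \text{for all}\ j\in S\}.
\]
The set counted by $\delta_q(\{1,\dots,n\})$ is $\{x\mid xG=0\}$, and it is contained in the set counted by $\delta_q(S)$; hence there is a nonzero codeword vanishing at every position of $S$ if and only if $\delta_q(S)>\delta_q(\{1,\dots,n\})$. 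Choosing a minimal-weight nonzero codeword and letting $S$ be its zero set, one obtains
\[
  n-d_q=\max\{\,|S|\ \mid\ S\subseteq\{1,\dots,n\},\ \delta_q(S)>\delta_q(\{1,\dots,n\})\,\},
\]
where the empty maximum (which occurs exactly when $C_G(q)=\{0\}$) is read as $d_q=+\infty$. From the Smith normal form of $G_S$, if $e_1(S)\mid\cdots\mid e_{\rank G_S}(S)$ are its elementary divisors then $\delta_q(S)=q^{\,k-\rank G_S}\prod_{i}\gcd(q,e_i(S))$; in particular $\delta_q(S)\ge q^{\,k-\rank G_S}$, with equality if and only if $\gcd(q,e_{\rank G_S}(S))=1$, and every $e_i(S)$ divides $\rho_0$ by the definition of the lcm period.

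For (1), if $\gcd(q,\rho_0)=1$ then $\gcd(q,e_i(S))=1$ for all $S$ and $i$, so $\delta_q(S)=q^{\,k-\rank G_S}$; since $q>1$, the inequality $\delta_q(S)>\delta_q(\{1,\dots,n\})$ is equivalent to $\rank G_S<r$, a condition independent of $q$. Hence $d_q$ takes the same value for every $q>1$ coprime to $\rho_0$, and in particular equals $d_{m_0}$. For (2), note first that $m\mid\rho_0$ and $m\ge 2$ force $r\ge1$. Since $e_1\mid\cdots\mid e_r\mid\rho_0$ and $\gcd(m,e_r)=1$, a $p$-adic valuation argument gives $\gcd(q,e_i)=1$ for all $i$: if a prime $p$ divides some $e_i$ then $p\mid e_r\mid\rho_0$ and $p\nmid m$, so from $\min(v_p(q),v_p(\rho_0))=v_p(m)=0$ and $v_p(\rho_0)\ge1$ we get $v_p(q)=0$. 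Therefore $\delta_q(\{1,\dots,n\})=q^{\,k-r}$, and likewise $\delta_m(\{1,\dots,n\})=m^{\,k-r}$. Consequently a set $S$ contributes to the maximum for $q$ precisely when either $\rank G_S<r$ (which holds for every $q\ge2$) or $\rank G_S=r$ and $\gcd(q,e_r(S))>1$. In the second case $e_r(S)\mid\rho_0$, so any prime dividing $\gcd(q,e_r(S))$ divides $\gcd(q,\rho_0)=m$, while conversely $m\mid q$; thus $\gcd(q,e_r(S))>1\iff\gcd(m,e_r(S))>1$. Hence the contributing sets for $q$ and for $m$ coincide, and $d_q=d_m$.

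For (3) we argue by contraposition. If $m\mid e_1$ then $m\mid e_i$ for every $i$, so $\delta_m(\{1,\dots,n\})=m^{\,k-r}\,m^{\,r}=m^{k}$; that is, $C_G(m)=\{0\}$ (equivalently, $m$ divides every entry of $G$), so $d_m=+\infty$. Pick a prime $\ell\nmid\rho_0$ and set $q:=m\ell$. A valuation check gives $\gcd(q,\rho_0)=m$: for $p=\ell$ use $\ell\nmid\rho_0$ together with $m\mid\rho_0$, and for $p\neq\ell$ note $v_p(q)=v_p(m)\le v_p(\rho_0)$. However $q\nmid e_1$, since otherwise $\ell\mid e_1\mid\rho_0$, contradicting $\ell\nmid\rho_0$; hence $C_G(q)\neq\{0\}$ and $1\le d_q\le n$, so $d_q\neq d_m$. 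This contradicts the hypothesis of (3), and therefore $m\nmid e_1$.

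The routine ingredients are the reduction formula for $n-d_q$ and the Smith normal form expression for $\delta_q(S)$; the main obstacle is the arithmetic bookkeeping in (2) and (3), namely passing between the single condition $\gcd(q,\rho_0)=m$ and the conditions $\gcd(q,e_i)=1$ and $\gcd(q,e_r(S))>1$. This is exactly where the hypotheses $\gcd(m,e_r)=1$ and $m\mid e_1$, together with the fact that $\rho_0$ is divisible by every elementary divisor of every column-submatrix, are used; one must also keep track of the degenerate zero-code situation, which the $+\infty$ convention absorbs.
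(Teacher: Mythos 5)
Your proof is correct, and it takes a genuinely different route from the paper's. The paper builds up the quasi-polynomial framework (explicit constituents $f_i^m(t)$, a closed formula for $f_{d'_m}^m(t)$, and divisibility lemmas on elementary-divisor ratios), culminating in an exact characterization (Theorem~\ref{thm-mindist-ifandonlyif}) of when $d_m$ agrees with the generic value $d'_m$; Theorem~\ref{thm-sufficient-necessary-mindist} is then read off from that characterization. You bypass the constituent machinery entirely by first establishing the combinatorial identity
$n - d_q = \max\{\lvert S\rvert : \delta_q(S) > \delta_q(E)\}$
(with the empty maximum encoding $d_q = +\infty$), and then using only the Smith normal form count $\delta_q(S) = q^{k-\operatorname{rank}G_S}\prod_i\gcd(q,e_i(S))$ plus $p$-adic valuation bookkeeping. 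For (1) and (2) you show the set of contributing $S$ depends only on $\gcd(q,\rho_0)$ under the stated hypotheses; for (3) you observe $m\mid e_1$ forces $C_G(m)=\{0\}$ while $q=m\ell$ (for a prime $\ell\nmid\rho_0$) still has $\gcd(q,\rho_0)=m$ but $C_G(q)\neq\{0\}$. The paper's approach is heavier but yields the stronger Theorem~\ref{thm-mindist-ifandonlyif} along the way; yours is shorter and more self-contained if all one wants is Theorem~\ref{thm-sufficient-necessary-mindist}, and it makes the role of the hypotheses $\gcd(m,e_r)=1$ and $m\mid e_1$ very transparent. One minor caveat: in (2), when invoking ``$\operatorname{rank}G_S<r$ holds for every $q\geq 2$'' you implicitly use $\delta_q(E)=q^{k-r}$, which you have already justified from $\gcd(q,e_i)=1$; it is worth saying explicitly that this applies to $q=m$ as well (via $m\mid q$), which you do, so the argument is sound.
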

When $m$ divides $q$, there is a natural injection $\mathbb{Z}_m\to\mathbb{Z}_q$; hence $d_m\ge d_q$ (see Remark \ref{rem-min-weight-distance}).
Theorem \ref{thm-sufficient-necessary-mindist} is a refinement of this fact.
In the case where $m$ and $e_r$ are relatively prime for any $m \geq 2$, Theorem \ref{thm-sufficient-necessary-mindist}\textup{(1)} and \textup{(2)} imply that the values $d_q$ for all positive integers $q$ are determined by certain values among the minimum weights $d_2,d_3,\dots, d_{\rho_0+1}$.
For example, when $\rho_0=2$, calculating $d_2$ and $d_3$ allows us to determine $d_q$ for all $q\in\mathbb{Z}_{>0}$.
When $\rho_0=6$, calculating $d_2$, $d_3$, $d_5$, and $d_6$ allows us to determine $d_q$ for all $q\in\mathbb{Z}_{>0}$.
In particular, if $G$ is represented as $G=(I|A)$ with identity matrix $I$ and some matrix $A$, then all elementary divisors of $G$ equal $1$.
Hence $e_r=1$, and $e_r$ is relatively prime to every $m\ge2$.
In addition, Theorem \ref{thm-sufficient-necessary-mindist} \textup{(3)} is a necessary condition for the minimum weight to be constant.
The converses of Theorem \ref{thm-sufficient-necessary-mindist}\textup{(2)} and \textup{(3)} do not hold.
Example \ref{ex-2004} is a counterexample to the converse of Theorem \ref{thm-sufficient-necessary-mindist} \textup{(2)}, and Example \ref{ex-Kerdoc4} is a counterexample to the converse of Theorem \ref{thm-sufficient-necessary-mindist} \textup{(3)}.
See Section \ref{sec-examples} for details.

Greene’s theorem \cite{Greene1976} (see Theorem~\ref{thm-Greene}) provides a transformation formula relating a weight enumerator of a code over a field and the Tutte polynomial of the matroid associated with the code.
It has applications to the MacWilliams identity (see \cite{JP2013}).
We also establish an analog of Greene’s theorem that provides a transformation formula between the corresponding weight enumerator and the Tutte quasi-polynomial in the sense of Br\"and\'en and Moci \cite{BM2014}, both arising from an integer matrix.

Finally, we compute the number of codewords with all nonzero entries of the codes derived from the matroids $N_k$ and $Z_k$, which is essentially equivalent to computing the characteristic quasi-polynomials of the associated hyperplane arrangements.
Using coding-theoretic methods, we detect congruence conditions by employing the complete orthogonal system of additive characters
modulo $q$, thereby reducing the enumeration of codewords whose entries are all nonzero to the estimation of certain character sums.

The organization of this paper is as follows.
In Section \ref{sec-preliminaries}, we recall the definitions and known results of the weight enumerator and the characteristic quasi-polynomial.
We also present detailed calculations for specific examples that motivated this research.
In Section \ref{sec-quasi-poly-weight-dist}, we prove the main theorem of this paper using an explicit formula for the weight enumerator as a quasi-polynomial.
In Section \ref{sec-Greene}, we give a transformation formula between the Tutte quasi-polynomial and the weight enumerator.
In Section \ref{sec-NkZk}, we compute the number of maximum-weight codewords for the codes related to $N_k$ and $Z_k$.
Finally, in Section \ref{sec-examples}, notable examples are listed to help the reader understand the results of this paper.

\section{Preliminaries on weight enumerators and characteristic quasi-polynomials}\label{sec-preliminaries}

Let $G\in\operatorname{Mat}_{k\times n}(\mathbb{Z})$ be a $k\times n$ matrix whose entries are integers, and write $G=\left(g^1\ g^2\ \cdots\ g^n\right)$, where $g^1,g^2,\dots,g^n$ are the columns of $G$.
We define $\mathbb{Z}_{>0} \coloneq \{z\in\mathbb{Z}\mid z>0\}$, $\mathbb{Z}_{\geq 0} \coloneq \{z\in\mathbb{Z}\mid z\geq 0\}$, and $[z] \coloneq \{1,2,\dots,z\}$ for any $z\in\mathbb{Z}_{>0}$.
Throughout this paper, we assume that $G$ does not contain zero column vectors, that is, $g^j\neq 0$ for any $j\in[n]$.
For $q\in\mathbb{Z}_{>0}$, define $\mathbb{Z}_q \coloneq \mathbb{Z}/q\mathbb{Z}$.
Then $\mathbb{Z}_q$ and $\mathbb{Z}_q^n$ naturally have structures of the $\mathbb{Z}$-module.
For $u=(u_1,\dots,u_k)\in\mathbb{Z}_q^k$ and $g=\,^{t}(g_1,\dots,g_k)\in\mathbb{Z}^k$, let $ug\coloneq \sum_{i=1}^{k}u_ig_i\in\mathbb{Z}_q$.
Define
\begin{align*}
E \coloneq \{1,2,\dots,n\}.
\end{align*}
Then $G$ defines the hyperplanes\footnote{We call $H_{j}(q)$ a hyperplane in $\mathbb{Z}_q^{k}$ by a slight abuse of terminology.} $H_{j}(q) \coloneq \{u\in\mathbb{Z}_q^k\mid u g^j=0\}$ for $j\in E$ and an arrangement $\mathcal{A}_q \coloneq \{H_{j}(q)\mid j\in E\}$ in $\mathbb{Z}_q^k$.
Let $\abs{X}$ be the cardinality of a set $X$.

\subsection{Weight enumerators}\label{subsec-weight-enumerators}
For $v=(v_1,\dots,v_n)\in\mathbb{Z}_q^n$, the \emph{support}, denoted by $\operatorname{supp}(v) \coloneq \{i\in E\mid v_i\neq 0\}$, is the set of indices of nonzero entries in $v$.
The \emph{Hamming weight} $\operatorname{wt}(v)$, or simply the \emph{weight} of $v$ is defined as $\operatorname{wt}(v) \coloneq \abs{\operatorname{supp}(v)}$, which is the number of nonzero entries in $v$.
\begin{defi}
For $G\in\operatorname{Mat}_{k\times n}(\mathbb{Z})$ and for $q\in\mathbb{Z}_{>0}$, the code $C_G(q)$ over $\mathbb{Z}_q$ is defined as
\begin{align*}\label{eq-def-code}
C_G(q) \coloneq \left\{uG\,\middle|\,u=(u_1,\dots,u_k)\in\mathbb{Z}_q^k\right\}.
\end{align*}
The matrix $G$ is called a generator matrix of $C_G(q)$, and the elements of $C_G(q)$ are called codewords.
\end{defi}
\begin{defi}
For $G\in\operatorname{Mat}_{k\times n}(\mathbb{Z})$ and for $q\in\mathbb{Z}_{>0}$, the \emph{weight enumerator} $W_G(q)=W_{G}(x,y;q)$ of $C(G,q)$ is defined as
\begin{align*}
W_G(q)=W_{G}(x,y;q)&\coloneq \sum_{c\in C_G(q)}x^{n-\operatorname{wt}(c)}y^{\operatorname{wt}(c)}
=\sum_{i=0}^n A_{G,i}(q)x^{n-i}y^i,
\end{align*}
where $A_{G,i}(q)\coloneq\abs{\{ c \in C_G(q) \mid \operatorname{wt}(c)=i \}}$.
\end{defi}
The weight enumerator $W_{G}(x,y;q)$ is invariant under elementary row operations, column permutations, and unit column multiplications for $G$ over $\mathbb{Z}$.
The coefficients $A_{G,i}(q)$ of the weight enumerator are also described as
\begin{align*}
    A_{G,i}(q)=\sum_{\substack{J \subseteq E \\ \abs{J} = n - i}}\abs{\{ c \in C_G(q) \mid \operatorname{supp}(c) = E \setminus J \}}
\end{align*}
for any $i\in\{0,1,\dots,n\}$.
The $i$-th entry of the codeword $uG$ being zero corresponds to $u\in\mathbb{Z}_q^m$ being contained in the hyperplane $H_i(q)$.
\begin{defi}
    For a non-empty subset $C$ of $\mathbb{Z}_q$, we define the \emph{minimum weight} of $C$ as $\min\{\wt(c)\mid c\in C\setminus 0\}$ if $C\neq\{0\}$ and $+\infty$ if $C=\{0\}$.
\end{defi}
The following proposition follows immediately from the definition.
\begin{prop}
    Let $q\in\mathbb{Z}_{>0}$.
    The minimum weight $d_q$ of $C_G(q)$ is
    \begin{align}
        d_q \coloneq
        \begin{cases}
            +\infty&\ \text{if}\quad q=1,\\
            \min\{i\in\mathbb{Z}_{>0}\mid A_{G,i}(q)\neq 0\} &\ \text{if}\quad q>1.
        \end{cases}
    \end{align}
\end{prop}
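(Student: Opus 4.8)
The plan is to unwind the two relevant definitions — the minimum weight of a subset of $\mathbb{Z}_q^n$ and the coefficient $A_{G,i}(q)$ — and to verify that they agree, treating the cases $q=1$ and $q>1$ separately. The proof is purely a matter of matching conventions, so I do not expect any genuine obstacle; the only point that needs a word of care is the convention $\min\emptyset=+\infty$, which is precisely what makes a single formula valid whether or not the code is trivial.

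First, for $q=1$ we have $\mathbb{Z}_1=\mathbb{Z}/1\mathbb{Z}=\{0\}$, hence $C_G(1)=\{uG\mid u\in\mathbb{Z}_1^k\}=\{0\}$, and by the definition of minimum weight the value assigned to the zero code is $+\infty$; this yields the first line. Next, fix $q>1$. By definition, $d_q=\min\{\wt(c)\mid c\in C_G(q)\setminus 0\}$ when $C_G(q)\neq\{0\}$, and $d_q=+\infty$ otherwise. The key observation is that for a codeword $c\in\mathbb{Z}_q^n$ one has $c\neq 0$ if and only if $\wt(c)=\abs{\supp(c)}\geq 1$, i.e.\ $\wt(c)\in\mathbb{Z}_{>0}$. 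Partitioning the nonzero codewords according to their weight, $C_G(q)\setminus 0=\bigsqcup_{i\in\mathbb{Z}_{>0}}\{c\in C_G(q)\mid\wt(c)=i\}$, a value $i\in\mathbb{Z}_{>0}$ is attained by some nonzero codeword exactly when $A_{G,i}(q)=\abs{\{c\in C_G(q)\mid\wt(c)=i\}}\neq 0$. Therefore $\min\{\wt(c)\mid c\in C_G(q)\setminus 0\}=\min\{i\in\mathbb{Z}_{>0}\mid A_{G,i}(q)\neq 0\}$, which is the second line.

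Finally I would record the degenerate possibility that $C_G(q)=\{0\}$ even for some $q>1$ (for instance $G=(q)$): in that situation $A_{G,i}(q)=0$ for every $i\in\mathbb{Z}_{>0}$, so the right-hand side of the claimed formula is $\min\emptyset=+\infty$ under the usual convention, in agreement with $d_q=+\infty$. This is the only spot where the argument is not a literal substitution of definitions, and it is resolved simply by adopting the empty-minimum convention.
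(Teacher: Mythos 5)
Your proof is correct and takes the same definition-unwinding approach the paper intends (the paper simply asserts the proposition ``follows immediately from the definition''). Your extra remark about the degenerate case $C_G(q)=\{0\}$ for $q>1$ (e.g.\ all entries of $G$ divisible by $q$, which is compatible with $G$ having no zero columns as integer vectors) and the convention $\min\emptyset=+\infty$ is a genuine point that the paper glosses over, and it correctly reconciles the two sides of the formula.
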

Note that $d_1$ is the minimum weight for a code consisting of only the zero codeword.
It is easy to see that $d_q =\min\{i\in\mathbb{Z}_{>0}\mid A_{G,i}(q)\neq 0\}$ for $q>1$.
\begin{rem}\label{rem-min-weight-distance}
The minimum value of $\abs{\{i\mid c_i\neq c^{\prime}_i\}}$ for any distinct $(c_1,\dots,c_n),(c^{\prime}_1,\dots,c^{\prime}_n)\in C_G(q)$ is called the minimum Hamming distance of $C_G(q)$.
It is well known that the minimum weight and the minimum distance coincide (see, for example, \cite[Proposition 5.3]{JP2013}).
The symbol $d$ is commonly used for minimum distance, but in this paper, we use it as a symbol for minimum weight.
\end{rem}

\begin{rem}\label{rem-reduct-min-dist}
    Let $q,m\in\mathbb{Z}_{>0}$ with $m\mid q$, and let $\pi \colon \mathbb{Z}_q\to\mathbb{Z}_m$ be the natural reduction map, extended coordinatewise to $\pi \colon \mathbb{Z}_q^n\to\mathbb{Z}_m^n$.
    For a code $C\le\mathbb{Z}_q^n$, the image $\pi(C)\le\mathbb{Z}_m^n$ is a code.
    For any $c\in C$, we have $\frac{q}{m}\cdot c\in C$
    and $\operatorname{supp}(\frac{q}{m}\cdot c)=\operatorname{supp}(\pi(c))$.
    Hence, every weight occurring in $\pi(C)$ also occurs in $C$, i.e., $\{\operatorname{wt}(c)\mid c\in C\setminus \{0\}\}\supseteq \{\operatorname{wt}(\pi(c))\mid c\in C\setminus \{0\}\}$.
    Taking minimum yields $d_q\le d_m$.
\end{rem}

\begin{example}\label{example-B2-20/02}
\begin{figure}[t]
  \centering
  \begin{minipage}{0.45\textwidth}
    \centering
    \begin{tikzpicture}[scale=0.8]
      \draw[step=1,black] (0,0) grid (4,4);
      \draw[red, ultra thick] (0,0) -- (4,0);
      \draw[red, ultra thick] (0,0) -- (0,4);
      \draw[red, ultra thick] (0,0) -- (4,4);
      \draw[red, ultra thick] (1,4) -- (4,1);
      \draw[red, ultra thick] (-0.5,0.5) -- (0.5,-0.5);
      \draw[fill=black] (0,0) circle (0.25);
      \node[below left] at (-0.1,-0.1) {$0$};
    \end{tikzpicture}
    \caption{Hyperplanes and intersection points of the arrangement in Example \ref{example-B2-20/02} \textup{(1)} when $q=5$}
  \end{minipage}
  \hfill
  \begin{minipage}{0.45\textwidth}
    \centering
    \begin{tikzpicture}[scale=0.7]
      \draw[step=1,black] (0,0) grid (5,5);
      \draw[red, ultra thick] (0,0) -- (5,0);
      \draw[red, ultra thick] (0,0) -- (0,5);
      \draw[red, ultra thick] (0,0) -- (5,5);
      \draw[red, ultra thick] (1,5) -- (5,1);
      \draw[red, ultra thick] (-0.5,0.5) -- (0.5,-0.5);
      \draw[fill=black] (3,3) circle (0.25);
      \draw[fill=black] (0,0) circle (0.25);
      \node[below left] at (-0.1,-0.1) {$0$};
      \node[below] at (3,0) {$\frac{q}{2}$};
      \node[left] at (0,3) {$\frac{q}{2}$};
    \end{tikzpicture}
    \caption{Hyperplanes and intersection points of the arrangement in Example \ref{example-B2-20/02} \textup{(1)} when $q=6$}
  \end{minipage}
\end{figure}
    \textup{(1)} Let $k=2$, $n=4$, and $G=\begin{pmatrix} 1&0&1&1\\ 0&1&1&-1 \end{pmatrix}$.
    Let $q\in\mathbb{Z}_{>0}$.
    For any $u=(u_1,u_2)\in \mathbb{Z}_{q}^k$, we have $uG=(u_1,u_2,u_1+u_2,u_1-u_2)$.
    Thus, the hyperplanes in $\mathcal{A}_q$ are $H_1(q)=\{u\in\mathbb{Z}_q^k\mid u_1=0\}$, $H_2(q)=\{u\in\mathbb{Z}_q^k\mid u_2=0\}$, $H_3(q)=\{u\in\mathbb{Z}_q^k\mid u_1+u_2=0\}$, and $H_4(q)=\{u\in\mathbb{Z}_q^k\mid u_1-u_2=0\}$.
    In this case, $uG=u^{\prime}G$ is equivalent to $u=u^{\prime}$ for any $u,u^{\prime}\in\mathbb{Z}_q^k$.
    Therefore, $A_{G,i}(q)$ is the number of elements $u\in\mathbb{Z}_q^k$ contained in exactly $n-i$ hyperplanes in $\mathcal{A}_q$.
    From now on, we calculate $A_{G,i}(q)$ separately for when $q$ is even and when $q$ is odd.

    Let $q$ be odd.
    Since $A_{G,4}(q)$ is the number of elements in $\mathbb{Z}_{q}^k\setminus\bigcup_{i\in E}H_i(q)$, we have $A_{G,4}(q)=q^2-4q+3=(q-1)(q-3)$ by the inclusion-exclusion principle.
    Next, $A_{G,3}(q)$ is the number of elements contained in exactly one hyperplane.
    Since the origin is contained in four hyperplanes and each hyperplane contains $q-1$ elements except for the origin, we have $A_{G,3}(q)=4(q-1)$.
    There are no elements contained in exactly two or exactly one hyperplane.
    Thus, we have $A_{G,2}(q)=A_{G,1}(q)=0$.
    Finally, $A_{G,0}(q)=1$ since the only element contained in all hyperplanes is the origin.
    Therefore, we have
    \begin{align*}
        W_G(x,y;q)=x^4+4(q-1)xy^3+(q-1)(q-3)y^4.
    \end{align*}

    Let $q$ be even.
    In this case, $(q/2,q/2)$ is contained in $H_3(q)$ and $H_4(q)$.
    By counting the number of elements in the compliment of the union of all hyperplanes, we have $A_{G,4}(q)=q^2-4q-3-1=(q-2)^2$.
    The number of elements contained in exactly one hyperplane is $A_{G,3}(q)=4(q-1)-2=4q-6$.
    Unlike the case where $q$ is odd, there exists only one element contained by exactly two hyperplanes.
    Thus, we have $A_{G,2}(q)=1$.
    Since $A_{G,1}(q)=0$ and $A_{G,0}(q)=1$, we have
    \begin{align*}
        W_G(x,y;q)=x^4+x^2y^2+(4q-6)xy^3+(q-2)^2y^4.
    \end{align*}

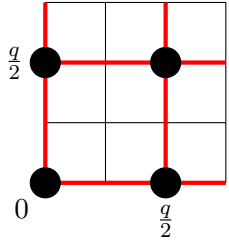
\begin{figure}[t]
  \centering
  \begin{tikzpicture}[scale=0.8]
    \draw[step=1,black] (0,0) grid (3,3);
    \draw[red, ultra thick] (0,0) -- (3,0);
    \draw[red, ultra thick] (0,0) -- (0,3);
    \draw[red, ultra thick] (2,0) -- (2,3);
    \draw[red, ultra thick] (0,2) -- (3,2);
    \foreach \x/\y in {0/0, 2/0, 0/2, 2/2}
      \draw[fill=black] (\x,\y) circle (0.25);
    \node[below left] at (-0.1,-0.1) {$0$};
    \node[below] at (2,-0.2) {$\frac{q}{2}$};
    \node[left] at (-0.2,2) {$\frac{q}{2}$};
  \end{tikzpicture}
  \caption{Hyperplanes and intersection points of the arrangement in Example \ref{example-B2-20/02} \textup{(2)} when $q=4$}
\end{figure}
    \textup{(2)} Let $k=2$, $n=2$, and $G=\begin{pmatrix} 2&0\\ 0&2 \end{pmatrix}$.
    Let $q\in\mathbb{Z}_{>0}$.
    For any $u=(u_1,u_2)\in \mathbb{Z}_{q}^k$, we have $uG=(2u_1,2u_2)$.
    The hyperplanes in $\mathcal{A}_q$ are $H_1(q)=\{u\in\mathbb{Z}_q^k\mid 2u_1=0\}$ and $H_2(q)=\{u\in\mathbb{Z}_q^k\mid 2u_2=0\}$.
    
    If $q$ is odd, then $G$ can be considered as the identity matrix.
    Thus, we have $W_G(x,y;q)=((q-1)y+x)^2=x^2+2(q-1)xy+(q-1)^2y^2$.

    Let $q$ be even.
    In this case, $uG=u^{\prime}G$ does not imply $u=u^{\prime}$.
    For $c\in C_G(q)$, there exist four elements that satisfy $u\in\mathbb{Z}_q^k$ and $uG=c$.
    In addition, $(0,0)$, $(q/2,0)$, $(0,q/2)$, and $(q/2,q/2)$ are contained in exactly two hyperplanes.
    Therefore, we have
    \begin{align*}
        &A_{G,0}(q)=\frac{4}{4}=1,\ 
        A_{G,1}(q)=\frac{4(q-2)}{4}=q-2,\ 
        A_{G,2}(q)=\frac{q^2-4q+4}{4}=\left(\frac{q}{2}-1\right)^2\\
        &W_G(x,y;q)=x^2+(q-2)xy+\left(\frac{q} {2}-1\right)^2y^2.
    \end{align*}
\end{example}
As seen in these examples, it is observed that the weight enumerators and minimum weights exhibit a certain periodicity.
In Example \ref{example-B2-20/02} \textup{(2)}, when $q=2$, the coefficients of $W_G(x,y;q)$ in $xy$ and $y^2$ are zero, which means that the codeword with weight $1$ does not appear.
On the other hand, when $q$ is even and $q>2$, the codewords with weight $1$ or weight $2$ appear, and the minimum weight is $1$.
Therefore, in some cases, the periodicity of the minimum weight also requires excluding special values of $q$.
Furthermore, in Example \ref{example-B2-20/02} \textup{(2)}, a polynomial in $q$ with rational coefficients appears.

\subsection{Characteristic quasi-polynomials}\label{subsec-Char-quasi-poly}
As described in the examples in Section \ref{subsec-weight-enumerators}, the weight enumerator can be computed by counting the number of hyperplanes on which each point $\mathbb{Z}_q^k$ lies.
This approach is well-suited to the characteristic quasi-polynomials introduced by Kamiya, Takemura, and Terao~\cite{KTT08,KTT11}. In this paper, we apply the theory of characteristic quasi-polynomials to weight enumerators and discuss the periodicity of minimum weights.

Let $R$ and $R^{\prime}$ be commutative rings with $R\subseteq R^{\prime}$. 
A function $f\colon\mathbb{Z}_{>0}\rightarrow R$ is called a \emph{quasi-polynomial} over $R^{\prime}$ if there exist a positive integer $\rho\in\mathbb{Z}_{>0}$ and polynomials $f^1(t),f^2(t)$, $\ldots,f^{\rho}(t)\in R^{\prime}[t]$ such that for $q\in\mathbb{Z}_{>0}$,
\begin{align*}
f(q)=
\begin{cases}
f^1(q)\quad&q\equiv 1\  \mod\ \rho,\\
f^2(q)\quad&q\equiv 2\  \mod\ \rho,\\
\quad\vdots&\quad\vdots\\
f^{\rho}(q)\quad&q\equiv \rho\ \mod\ \rho.\\
\end{cases}
\end{align*}
The number $\rho$ is called a \emph{period}, and the polynomial $f^m(t)$ ($m\in[\rho]$) is called the $m$-\emph{constituent} of the quasi-polynomial $f$.
The minimum value of the periods of the quasi-polynomial $f$ is called the \emph{minimum period} of $f$.
In addition, we call $f$ a monic quasi-polynomial if $f^1(t),f^2(t),\dots,f^{\rho}(t)$ are monic polynomials.
A quasi-polynomial $f$ with a period $\rho$ is said to have the \emph{gcd property with respect to $\rho$} if the $m$-constituents $f^1(t),f^2(t),\dots,f^{\rho}(t)$ depend on $m$ only through $\gcd(\rho,m)$, i.e., $f^a(t)=f^b(t)$ if $\gcd(a,\rho)=\gcd(b,\rho)$.

Let $G\in\operatorname{Mat}_{k\times n}(\mathbb{Z})$ and let $J$ be a subset of $E$ with $J\neq\emptyset$.
We write $J=\{j_{1},j_{2},\dots, j_{\abs{J}}\}$ and set $G_{J} \coloneq (g^{j_1}\ g^{j_2}\ \cdots\ g^{j_{\abs{J}}})\in \operatorname{Mat}_{k\times \abs{J}}(\mathbb{Z})$, i.e., $G_{J}$ is the submatrix of $G$ consisting of columns indexed by $J$.
An integer square matrix $P$ is said to be unimodular if $\det(P)\in\{1,-1\}$.
Note that the inverse matrix of the unimodular matrix is an integer matrix.
From the theory of the Smith normal form, there exist unimodular matrices $P\in \operatorname{Mat}_{k\times k}(\mathbb{Z})$, $Q\in \operatorname{Mat}_{\abs{J}\times \abs{J}}(\mathbb{Z})$, and positive integers $e_{1,J},e_{2,J},\dots,e_{r(J),J}\in\mathbb{Z}_{>0}$ such that
\begin{align*}
PG_J Q=\operatorname{diag}(e_{1, J},e_{2, J},\dots,e_{r(J), J},0,\dots,0)=
\begin{pmatrix}
  e_{1,J} &0 &\cdots&\cdots&\cdots&\cdots&0\\
  0& e_{2, J} &\ddots& & & &\vdots\\
  \vdots&\ddots& \ddots &\ddots& & &\vdots \\
  \vdots& &\ddots& e_{r(J), J} &\ddots& &\vdots \\
  \vdots& & &\ddots& 0 &\ddots&\vdots\\
  \vdots& & & &\ddots& \ddots &0 \\
  0&\cdots&\cdots&\cdots&\cdots&0& 0
\end{pmatrix}
\end{align*}
and $e_{1,J}|e_{2,J}|\cdots|e_{r(J),J}$.
Such integers $e_{1,J},e_{2,J},\dots,e_{r(J),J}$ are unique and are called \emph{elementary divisors} of $G_J$.
In addition, $r(J)$ is called the rank of $G_J$.
Note that $r(J)\geq 1$ in this case, since $G$ does not contain zero column vectors.
If $J\subseteq J^{\prime}$, then we have $r(J)\leq r(J^{\prime})$ since the set of columns indexed by $J$ is included in the set of columns indexed by $J^{\prime}$.
In the case where $J=\emptyset$, we define $r(\emptyset)\coloneq 0$.
Then, the \emph{lcm period} $\rho_0$ of $G$ is defined as the lcm of the last elementary divisors $e_{r(J),J}$ of $G_J$, that is,
\begin{align*}
\rho_0 \coloneq \operatorname{lcm}\left(e_{r(J),J}\,\middle|\,\emptyset\neq J\subseteq E\right).
\end{align*}
As $q\in\mathbb{Z}_{>0}$ varies, the number $\abs{\mathbb{Z}_q^k\setminus \bigcup_{j\in E}H_j(q)}$ of the complement of the union of hyperplanes can be regarded as a function of $q$. 

\begin{thm}[Kamiya, Takemura, and Terao~\cite{KTT08}]\label{thm-charquasi-KTT}
The function $\chi^{\mathrm{quasi}}_G \colon \mathbb{Z}_{>0}\to\mathbb{Z}$ defined as $\chi^{\mathrm{quasi}}_G(q)=\abs{\mathbb{Z}_q^k\setminus \bigcup_{j\in E}H_j(q)}$ is a monic quasi-polynomial over $\mathbb{Z}$ with a period $\rho_{0}$ that has the gcd property with respect to $\rho_{0}$.
\end{thm}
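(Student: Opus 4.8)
The plan is to expand $\chi^{\mathrm{quasi}}_G(q)$ by inclusion--exclusion over $E$ and to evaluate each resulting term using the Smith normal form, in the spirit of the computations in Section~\ref{subsec-weight-enumerators}. A point $u\in\mathbb{Z}_q^k$ lies in $\mathbb{Z}_q^k\setminus\bigcup_{j\in E}H_j(q)$ precisely when it avoids every hyperplane $H_j(q)$, and $\bigcap_{j\in J}H_j(q)=\{u\in\mathbb{Z}_q^k\mid uG_J=0\}$ for $\emptyset\ne J\subseteq E$; hence
\begin{align*}
\chi^{\mathrm{quasi}}_G(q)=\sum_{J\subseteq E}(-1)^{\abs{J}}\,N_J(q),\qquad N_J(q):=\bigl\lvert\{u\in\mathbb{Z}_q^k\mid uG_J=0\}\bigr\rvert,
\end{align*}
with the convention $N_\emptyset(q)=q^k$. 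It then suffices to describe each $N_J$ as a function of $q$.

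First I would compute $N_J(q)$ for $\emptyset\ne J\subseteq E$ via the Smith normal form. Writing $PG_JQ=D$ with $P,Q$ unimodular and $D=\diag(e_{1,J},\dots,e_{r(J),J},0,\dots,0)$, the reductions of $P$ and $Q$ modulo $q$ are invertible over $\mathbb{Z}_q$ because $\det(P),\det(Q)\in\{1,-1\}$ are units; hence the substitution $v=uP^{-1}$ is a bijection of $\mathbb{Z}_q^k$, and $uG_J=0$ is equivalent to $vD=0$ after right-multiplying by $Q$. The latter decouples into the scalar congruences $e_{i,J}v_i\equiv0\pmod q$ for $1\le i\le r(J)$, with $v_{r(J)+1},\dots,v_k$ unconstrained. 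Since $e\,x\equiv0\pmod q$ has exactly $\gcd(e,q)$ solutions in $\mathbb{Z}_q$, this yields
\begin{align*}
N_J(q)=q^{\,k-r(J)}\prod_{i=1}^{r(J)}\gcd(e_{i,J},q).
\end{align*}

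Next I would read off the constituents. Every $e_{i,J}$ divides $\rho_0$, so on the residue class $q\equiv a\pmod{\rho_0}$ the integer $\gcd(e_{i,J},q)=\gcd(e_{i,J},a)$ is constant, and $N_J$ there equals the monomial $\bigl(\prod_{i=1}^{r(J)}\gcd(e_{i,J},a)\bigr)q^{\,k-r(J)}$. Summing over $J$, the $a$-constituent of $\chi^{\mathrm{quasi}}_G$ is
\begin{align*}
t^k+\sum_{\emptyset\ne J\subseteq E}(-1)^{\abs{J}}\Bigl(\prod_{i=1}^{r(J)}\gcd(e_{i,J},a)\Bigr)t^{\,k-r(J)}\in\mathbb{Z}[t],
\end{align*}
a polynomial of degree $k$ with leading coefficient $1$, since every nonempty $J$ contributes only in degree $k-r(J)\le k-1$ (here $r(J)\ge1$ because $G$ has no zero columns); thus $\chi^{\mathrm{quasi}}_G$ is a monic quasi-polynomial of period $\rho_0$. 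Finally, $e_{i,J}\mid\rho_0$ implies $\gcd(e_{i,J},a)=\gcd(e_{i,J},\gcd(a,\rho_0))$, so the displayed constituent depends on $a$ only through $\gcd(a,\rho_0)$, which is exactly the gcd property with respect to $\rho_0$.

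I expect the main (mild) obstacle to be the second step: one must genuinely use that $P$ and $Q$ are \emph{unimodular}, not merely invertible over $\mathbb{Q}$, so that their mod-$q$ reductions stay invertible and preserve the number of solutions, and one must check that the diagonal system decouples into the $r(J)$ independent congruences counted by $\gcd(e_{i,J},q)$. Once this is in place, the remaining work --- collecting the inclusion--exclusion terms into per-residue polynomials and verifying monicity and the gcd property --- is elementary bookkeeping with divisibility of integers.
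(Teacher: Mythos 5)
Your proof is correct. The paper itself does not prove Theorem~\ref{thm-charquasi-KTT} but cites \cite{KTT08}; however, your argument is exactly the standard one, and it coincides with the machinery the paper builds in Section~\ref{sec-min}: your formula for $N_J(q)$ is precisely Lemma~\ref{lem-num-ele-H}, the inclusion--exclusion expansion is the $J=\emptyset$ specialization of the computation leading to Equation~\eqref{eq-AGi-sum}, and your observations that $e_{i,J}\mid\rho_0$ forces $\gcd(e_{i,J},q)$ to depend only on $q\bmod\rho_0$ (indeed only on $\gcd(q,\rho_0)$) and that $r(J)\ge1$ for nonempty $J$ gives monicity are the same ones the paper uses to establish Proposition~\ref{prop-quasipoly-AGi} and Corollary~\ref{cor-weight=n}.
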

The quasi-polynomial in Theorem \ref{thm-charquasi-KTT} is called the \emph{characteristic quasi-polynomial}.
Theorem \ref{thm-charquasi-KTT} states that $\rho_0$ is a period of the characteristic quasi-polynomial.
Although Theorem \ref{thm-charquasi-KTT} does not state whether $\rho_0$ is the minimum period, Higashitani, Tran, and Yoshinaga~\cite{HTY2023} have proven that $\rho_0$ is indeed the minimum period\footnote{The claim in the paper~\cite{HTY2023} is that “in the central case, the minimum period of the characteristic quasi-polynomial coincides with the lcm period.” Here, the central case refers to the situation where all hyperplanes pass through the origin. Several references, including the paper~\cite{HTY2023}, discuss the minimum period in non-central cases.}.
\begin{thm}[Higashitani, Tran, and Yoshinaga~\cite{HTY2023}]\label{thm-charquasi-HTY}
    The minimum period of a characteristic quasi-polynomial coincides with lcm period $\rho_0$.
\end{thm}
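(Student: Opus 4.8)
The plan is to combine Theorem~\ref{thm-charquasi-KTT} with the explicit form of the constituents coming from the Smith normal form, plus a short number-theoretic reduction. Writing $f_J(q)=\abs{\bigcap_{j\in J}H_j(q)}=q^{\,k-r(J)}\prod_{i=1}^{r(J)}\gcd(e_{i,J},q)$ for $\emptyset\neq J\subseteq E$ (and $f_\emptyset(q)=q^k$; note $k\ge 1$ since $G$ has no zero column), inclusion--exclusion gives $\chi^{\mathrm{quasi}}_G(q)=\sum_{J\subseteq E}(-1)^{\abs{J}}f_J(q)$. Since each $e_{i,J}$ divides $\rho_0$, one has $\gcd(e_{i,J},q)=\gcd(e_{i,J},\gcd(q,\rho_0))$, so on each residue class modulo $\rho_0$ the function $\chi^{\mathrm{quasi}}_G$ is an honest polynomial, depending on the class only through $g=\gcd(q,\rho_0)$, namely
\[
\phi_g(t)=\sum_{J\subseteq E}(-1)^{\abs{J}}\,t^{\,k-r(J)}\prod_{i=1}^{r(J)}\gcd(e_{i,J},g),
\]
which is monic of degree $k$ (only $J=\emptyset$ contributes to the leading term); by the gcd property of Theorem~\ref{thm-charquasi-KTT} these are exactly the constituents of $\chi^{\mathrm{quasi}}_G$.

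Next I would reduce to a statement about these constituents. The set of periods of a $\mathbb{Q}$-valued quasi-polynomial is closed under greatest common divisors (on a residue class modulo $\gcd(\rho,\sigma)$, the polynomial pieces coming from a period $\rho$ and those coming from a period $\sigma$ pairwise share a residue modulo $\gcd(\rho,\sigma)$, hence meet in infinite arithmetic progressions and must all coincide), so the minimum period divides every period; as $\rho_0$ is a period, it suffices to prove that $\rho_0/p$ is \emph{not} a period for every prime $p\mid\rho_0$. Fix such a $p$ and put $a=v_p(\rho_0)\ge 1$. The residue class $\{q>0:q\equiv p^a\pmod{\rho_0/p}\}$ contains exactly $p$ residue classes modulo $\rho_0$, those of $p^a+i(\rho_0/p)$ for $i=0,\dots,p-1$, and a direct computation of $p$-parts and prime-to-$p$ parts gives $\gcd(p^a,\rho_0)=p^a$ and $\gcd(p^a+i(\rho_0/p),\rho_0)=p^{a-1}$ for $1\le i\le p-1$. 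Hence on this class $\chi^{\mathrm{quasi}}_G$ coincides with $\phi_{p^a}$ on one infinite sub-progression and with $\phi_{p^{a-1}}$ on the others, so it is represented there by a single polynomial if and only if $\phi_{p^a}=\phi_{p^{a-1}}$. Thus the theorem reduces to the inequality of polynomials $\phi_{p^a}\neq\phi_{p^{a-1}}$ for every prime $p\mid\rho_0$.

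To establish this I would subtract the two constituent formulas. As $p^{a}$ and $p^{a-1}$ are prime powers, $\gcd(e_{i,J},p^{a})=p^{\min(v_p(e_{i,J}),\,a)}$ and likewise with $a-1$; setting $\mathcal{J}_p=\{J\subseteq E:v_p(e_{r(J),J})=a\}$, $\sigma_J=\sum_{i=1}^{r(J)}v_p(e_{i,J})$, and $m_J=\#\{i:v_p(e_{i,J})=a\}\ge 1$ for $J\in\mathcal{J}_p$ (using $e_{1,J}\mid\cdots\mid e_{r(J),J}$ and $e_{r(J),J}\mid\rho_0$), one obtains
\[
\phi_{p^a}(t)-\phi_{p^{a-1}}(t)=\sum_{J\in\mathcal{J}_p}(-1)^{\abs{J}}\,p^{\,\sigma_J-m_J}\bigl(p^{m_J}-1\bigr)\,t^{\,k-r(J)}.
\]
The whole difficulty is concentrated here: one must show this polynomial is not identically zero, i.e.\ exclude ``period collapse'' --- the possibility that the signs $(-1)^{\abs{J}}$ cancel the positive weights $p^{\sigma_J-m_J}(p^{m_J}-1)$. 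I would attack this by extracting the coefficient of the highest surviving power of $t$, namely $t^{\,k-\rho^\ast}$ with $\rho^\ast=\min\{r(J):J\in\mathcal{J}_p\}$, and showing the finite alternating sum over $\{J\in\mathcal{J}_p:r(J)=\rho^\ast\}$ does not vanish: among the subsets of smallest rank realizing $v_p(e_{r(J),J})=a$, the combinatorics of which columns may be added or removed while keeping both the rank and the $p$-part of the last elementary divisor --- together with the nesting $e_{1,J}\mid\cdots\mid e_{r(J),J}$ and the monotonicity under enlarging $J$ of the gcd's of the $i\times i$ minors --- should force a surviving term, typically via an identity of the shape $\sum_{\emptyset\neq J\subseteq S}(-1)^{\abs{J}}=-1$. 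An alternative is an induction on $n$ via deletion--restriction: a column that does not change $\rho_0$ may be deleted, while a column that does may be made primitive by an integral change of coordinates and removed by passing to the restriction (an arrangement in dimension $k-1$), tracking how $\phi_{p^a}$ and $\phi_{p^{a-1}}$ decompose.

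Granting $\phi_{p^a}\neq\phi_{p^{a-1}}$ for all primes $p\mid\rho_0$, no $\rho_0/p$ is a period, so $\rho_0$ divides the minimum period; since $\rho_0$ is itself a period, the minimum period equals $\rho_0$. In this plan the reduction steps --- monicity of the constituents from Theorem~\ref{thm-charquasi-KTT}, the gcd-closedness of the period set, and the elementary gcd computation inside the class of $p^a$ modulo $\rho_0/p$ --- are routine. The one step I expect to require real work is the non-vanishing of the alternating sum above: it is exactly the point where the arithmetic of the elementary divisors of the submatrices $G_J$, as opposed to an arbitrary nested family of integers, must be used to rule out cancellation.
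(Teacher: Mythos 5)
The paper does not prove this theorem; it cites it from Higashitani--Tran--Yoshinaga~\cite{HTY2023}, so there is no in-paper argument to compare against. Judged on its own terms, your proposal carries out the easy reduction correctly but stops exactly where the theorem actually lives. The gcd-closedness of the period set, the identification of the constituents $\phi_g$, the choice of the residue class of $p^a$ modulo $\rho_0/p$, and the computation $\gcd(p^a+i\rho_0/p,\rho_0)=p^{a-1}$ for $1\le i\le p-1$ are all correct and show that $\rho_0/p$ fails to be a period precisely when $\phi_{p^a}\neq\phi_{p^{a-1}}$. Your closed form for the difference,
\[
\phi_{p^a}(t)-\phi_{p^{a-1}}(t)=\sum_{J\in\mathcal{J}_p}(-1)^{\abs{J}}\,p^{\sigma_J-m_J}\bigl(p^{m_J}-1\bigr)\,t^{k-r(J)},
\]
is also right.

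The gap is the non-vanishing of this alternating sum, which you yourself flag as ``the one step I expect to require real work.'' You offer two hopes --- isolating the coefficient of $t^{k-\rho^\ast}$ and appealing to an identity of the shape $\sum_{\emptyset\neq J\subseteq S}(-1)^{\abs{J}}=-1$, or an induction via deletion--restriction --- but neither is carried out, and neither is routine: the set $\{J\in\mathcal{J}_p:r(J)=\rho^\ast\}$ is not of the form $\{\emptyset\neq J\subseteq S\}$, the weights $p^{\sigma_J-m_J}(p^{m_J}-1)$ vary with $J$, and for an arbitrary nested family of integers in place of elementary divisors of submatrices the corresponding sum \emph{can} cancel, so some genuinely matroidal/arithmetic input from $G$ is needed. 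This non-cancellation is precisely the substance of~\cite{HTY2023}; reducing to it and leaving it open does not constitute a proof.
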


\section{Quasi-polynomial of weight distribution}\label{sec-quasi-poly-weight-dist}
In this section, we discuss the periodicity for the minimum weights and give a necessary and sufficient condition under which the minimum weights are periodic.
\subsection{Constituents of the weight enumerator}\label{sec-min}
Let $G\in\operatorname{Mat}_{k\times n}(\mathbb{Z})$.
For $J\subseteq E$, we define
\begin{align}\label{eq-def-int-planes}
H_{J}(q) \coloneq \bigcap_{j\in J} H_{j}(q).
\end{align}
We conventionally define $\prod_{i=1}^{0}z_i \coloneq 1$.

\begin{lem}\label{lem-num-ele-H}
For $J\subseteq E$, we have
\begin{align}
\abs{H_{J}(q)}=\left(\prod_{j=1}^{r(J)}\operatorname{gcd}(q,e_{j,J})\right)q^{k-r(J)}.
\end{align}
\end{lem}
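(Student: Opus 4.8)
The plan is to reduce the count to a diagonal system by means of the Smith normal form. First I would unwind the definition: since $H_j(q) = \{u \in \mathbb{Z}_q^k \mid u g^j = 0\}$, a point $u \in \mathbb{Z}_q^k$ lies in $H_J(q) = \bigcap_{j \in J} H_j(q)$ precisely when $u g^j = 0$ in $\mathbb{Z}_q$ for every column $g^j$ of $G_J$, i.e.\ when $u G_J \equiv 0 \pmod q$, where $u$ is viewed as a row vector of length $k$. Thus $\abs{H_J(q)}$ equals the number of $u \in \mathbb{Z}_q^k$ solving the linear system $u G_J \equiv 0 \pmod q$. (The case $J = \emptyset$ is the empty intersection, which is all of $\mathbb{Z}_q^k$; with the convention $r(\emptyset) = 0$ and the empty product equal to $1$, the asserted formula reads $q^k$, so this case is consistent and may be set aside.)

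Next I would invoke the Smith normal form recalled in Section~\ref{subsec-Char-quasi-poly}: there are unimodular matrices $P \in \Mat_{k\times k}(\mathbb{Z})$ and $Q \in \Mat_{\abs{J}\times\abs{J}}(\mathbb{Z})$ with $P G_J Q = D := \diag(e_{1,J}, \dots, e_{r(J),J}, 0, \dots, 0)$. Because $P$ and $Q$ are unimodular, their reductions modulo $q$ are automorphisms of $\mathbb{Z}_q^k$ and $\mathbb{Z}_q^{\abs{J}}$ respectively. Hence $u G_J \equiv 0 \pmod q$ if and only if $(u P^{-1})(P G_J Q) \equiv 0 \pmod q$, and the substitution $v = u P^{-1}$ is a bijection of $\mathbb{Z}_q^k$ with itself. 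Therefore $\abs{H_J(q)} = \abs{\{\, v \in \mathbb{Z}_q^k \mid v D \equiv 0 \pmod q \,\}}$.

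Finally, the diagonal shape decouples the system: $v D = (e_{1,J} v_1, \dots, e_{r(J),J} v_{r(J)}, 0, \dots, 0)$, so the condition is $e_{j,J} v_j \equiv 0 \pmod q$ for $1 \le j \le r(J)$, with $v_{r(J)+1}, \dots, v_k$ unconstrained. The kernel of multiplication by $e_{j,J}$ on $\mathbb{Z}_q$ has exactly $\gcd(q, e_{j,J})$ elements, and the free coordinates contribute a factor $q^{k - r(J)}$; multiplying these independent counts gives $\abs{H_J(q)} = \big(\prod_{j=1}^{r(J)} \gcd(q, e_{j,J})\big) q^{k - r(J)}$, as claimed.

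I do not expect a genuine obstacle here: the argument is bookkeeping, and the only point requiring a word of justification is that multiplication by $e$ on $\mathbb{Z}_q$ has kernel of size $\gcd(q, e)$ (equivalently, $ex \equiv 0 \pmod q$ has $\gcd(q,e)$ solutions in $\mathbb{Z}_q$). Care is also needed to keep the row/column conventions for $u G_J$ consistent with the definition of the hyperplanes $H_j(q)$, but that is routine.
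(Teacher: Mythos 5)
Your proposal is correct and follows essentially the same route as the paper: reduce to the system $uG_J\equiv 0\pmod q$, apply the Smith normal form $PG_JQ=\diag(e_{1,J},\dots,e_{r(J),J},0,\dots,0)$, use the unimodularity of $P$ (for the bijective change of variables $v=uP^{-1}$) and of $Q$ (so that right-multiplication by $Q$ preserves the solution set), and count the decoupled diagonal system coordinate by coordinate, noting that $ev\equiv 0\pmod q$ has exactly $\gcd(q,e)$ solutions in $\mathbb{Z}_q$. This matches the paper's Lemma~\ref{lem-num-ele-H} proof step for step.
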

\begin{proof}
If $J=\emptyset$, then we have $\abs{H_{J}(q)}=\abs{\mathbb{Z}_q^k}=q^k$.
Let $J\neq\emptyset$.
For any $u\in\mathbb{Z}_q^k$, there exist unimodular matrices $P\in \operatorname{Mat}_{k\times k}(\mathbb{Z})$, $Q\in \operatorname{Mat}_{\abs{J}\times \abs{J}}(\mathbb{Z})$ such that
\begin{align*}
uG_J=0\ \Leftrightarrow\ uP^{-1}PG_JQ=0\ \Leftrightarrow\ \left(uP^{-1}\right)\operatorname{diag}(e_{1, J},e_{2, J},\dots,e_{r(J), J},0,\dots,0)=0.
\end{align*}
Since the $\mathbb{Z}$-homomorphism $\phi_{P^{-1}} \colon \mathbb{Z}_q^k\rightarrow\mathbb{Z}_q^k$ defined as $\phi_{P^{-1}}(u)=uP^{-1}$ for $u\in\mathbb{Z}_q^k$ is an isomorphism,
\begin{align*}
\abs{H_{J}(q)} &=\abs{\{u\in\mathbb{Z}_q^k\mid uG_J=0\}}\\
&=\abs{\{u^{\prime}=(u^{\prime}_1,\dots,u^{\prime}_k)\in\mathbb{Z}_q^k\mid (u^{\prime}_1e_{1,J},u^{\prime}_2e_{2,J},\dots,u^{\prime}_{r(J)}e_{r(J),J},0,\dots,0)=0\}}\\
&=\left(\prod_{j=1}^{r(J)}\operatorname{gcd}(q,e_{j,J})\right)q^{k-r(J)}.
\end{align*}
The last equality follows from $\abs{\{ y \in \mathbb{Z}_q \mid ye_{j,J} = 0 \}} = \abs{ \left\{ \frac{aq}{\operatorname{gcd}(q,e_{j,J})}\mathrel{}\middle |\mathrel{} a\in\{0,1,\dots,\operatorname{gcd}(q,e_{j,J})-1\} \right\}}$ for any $j\in [r(J)]$.
\end{proof}

\begin{lem}\label{lem-crit-thm}
For any $J\subseteq E$,
\begin{align}\label{eq-supp-HJ}
    \abs{\left\{ c \in C_{G}(q) \mid \operatorname{supp}(c) = E \setminus J \right\}} \cdot \abs{H_E(q)} = \abs{H_{J}(q) \setminus \bigcup_{j \in E \setminus J} H_{j}(q)}.
\end{align}
\end{lem}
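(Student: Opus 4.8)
The plan is to exhibit both sides of \eqref{eq-supp-HJ} as counts of the same subset of $\mathbb{Z}_q^k$: the right-hand side directly, and the left-hand side through the fibers of the encoding map. First I would consider the $\mathbb{Z}$-module homomorphism $\varepsilon\colon\mathbb{Z}_q^k\to C_G(q)$, $u\mapsto uG$, which is surjective by the definition of $C_G(q)$. Since the $j$-th coordinate of $uG$ equals $ug^j$, which vanishes in $\mathbb{Z}_q$ exactly when $u\in H_j(q)$, the kernel of $\varepsilon$ is $\{u\in\mathbb{Z}_q^k\mid uG=0\}=\bigcap_{j\in E}H_j(q)=H_E(q)$. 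Hence for every $c\in C_G(q)$ the fiber $\varepsilon^{-1}(c)$ is a coset of $H_E(q)$, and therefore has cardinality $\abs{H_E(q)}$.

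Next I would rewrite the support condition as a condition on $u$. By the same observation about coordinates, for $u\in\mathbb{Z}_q^k$ we have $\operatorname{supp}(uG)=E\setminus J$ if and only if $u\in H_j(q)$ for all $j\in J$ and $u\notin H_j(q)$ for all $j\in E\setminus J$, i.e.\ if and only if $u\in H_J(q)\setminus\bigcup_{j\in E\setminus J}H_j(q)$, using the definition \eqref{eq-def-int-planes} of $H_J(q)$. Because $\operatorname{supp}(uG)$ is a function of the codeword $uG$ alone, the condition is constant on the fibers of $\varepsilon$; so, writing $S:=\{c\in C_G(q)\mid\operatorname{supp}(c)=E\setminus J\}$, the set $H_J(q)\setminus\bigcup_{j\in E\setminus J}H_j(q)$ equals $\varepsilon^{-1}(S)=\bigsqcup_{c\in S}\varepsilon^{-1}(c)$.

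Finally I would take cardinalities: $\varepsilon^{-1}(S)$ is a disjoint union of $\abs{S}$ fibers, each of size $\abs{H_E(q)}$ by the first step, so $\abs{H_J(q)\setminus\bigcup_{j\in E\setminus J}H_j(q)}=\abs{S}\cdot\abs{H_E(q)}$, which is exactly \eqref{eq-supp-HJ}. This argument is essentially bookkeeping, and I do not expect a genuine obstacle; the only points worth a sentence are the degenerate cases. When $J=E$ the union over $E\setminus J$ is empty and $\operatorname{supp}(c)=\emptyset$ forces $c=0$, so the identity reads $\abs{H_E(q)}=\abs{H_E(q)}$; when $J=\emptyset$ we have $H_\emptyset(q)=\mathbb{Z}_q^k$, so the identity becomes $A_{G,n}(q)\cdot\abs{H_E(q)}=\chi^{\mathrm{quasi}}_G(q)$. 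Both are consistent with the conventions already fixed in the excerpt.
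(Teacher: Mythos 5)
Your proposal is correct and takes essentially the same route as the paper: both exploit the encoding map $u\mapsto uG$, identify its kernel with $H_E(q)$, translate the support condition on the codeword into membership of $u$ in $H_J(q)\setminus\bigcup_{j\in E\setminus J}H_j(q)$, and count via fibers of constant size $\abs{H_E(q)}$. The brief sanity checks for $J=E$ and $J=\emptyset$ are a nice addition but do not change the substance of the argument.
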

\begin{proof}
Let $\varphi_{G, q} \colon \mathbb{Z}_{q}^{k} \to C_G(q)$ be the $\mathbb{Z}$-homomorphism defined as $\varphi_{G, q}(u) \coloneq uG=(u\cdot g^1,u\cdot g^2,\dots,u\cdot g^n)$.
Clearly, we see that $\varphi_{G, q}$ is surjective, that is, $\operatorname{Im}\varphi_{G, q}=C_G(q)$.

Here, for any $c\in C_G(q)$ with $\operatorname{supp}(c)=E\setminus J$, there exists $u\in \mathbb{Z}_q^k$ with $\operatorname{supp}(uG)=E\setminus J$ such that $c=\varphi_{G, q}(u)$.
In addition, $\varphi_{G, q}^{-1}(c)=u+\operatorname{Ker}\varphi_{G, q}$.
Therefore, we have
\begin{multline*}
    \abs{\{c\in C_G(q)\mid \operatorname{supp}(c)=E\setminus J\}} \cdot \abs{\operatorname{Ker}\varphi_{G, q}}
    = \abs{\{u\in\mathbb{Z}_q^k\mid \operatorname{supp}(uG)=E\setminus J\}}\\
    = \abs{\{u\in\mathbb{Z}_q^k\mid ug^i = 0\ \text{if}\ i\in J,\ \text{and}\ ug^j \neq 0\ \text{if}\ j\in E\setminus J\}}
    =\abs{H_{J}(q) \setminus \bigcup_{j \in E \setminus J} H_{j}(q)}.
\end{multline*}
Since $\operatorname{Ker}\varphi_{G, q}=H_E(q)$, we obtain Equation \eqref{eq-supp-HJ}.
\end{proof}

Applying Lemma \ref{lem-crit-thm} when $J=\emptyset$, the number of codewords of weight $n$ (i.e., the maximum weight) is given by the characteristic quasi-polynomial divided by $\abs{H_E(q)}$, as follows.
\begin{cor}\label{cor-weight=n}
    We have 
    \begin{align*}
    A_{G,n}(q)=\frac{\abs{\mathbb{Z}_q^k\setminus \bigcup_{j \in E} H_{j}(q)}}{\abs{H_E(q)}}=\frac{\chi^{\mathrm{quasi}}_G(q)}{\abs{H_E(q)}}.    
    \end{align*}
\end{cor}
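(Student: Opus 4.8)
The plan is to obtain this as the $J=\emptyset$ specialization of Lemma~\ref{lem-crit-thm}, so the only real work is checking that the empty-index conventions line up. First I would observe that $E\setminus\emptyset=E$, so the left-hand side of \eqref{eq-supp-HJ} with $J=\emptyset$ becomes $\abs{\{c\in C_G(q)\mid \operatorname{supp}(c)=E\}}\cdot\abs{H_E(q)}$. By definition, a codeword $c$ satisfies $\operatorname{supp}(c)=E$ exactly when $\operatorname{wt}(c)=n$, so this count equals $A_{G,n}(q)$.

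Next, for the right-hand side I would use the convention $H_\emptyset(q)=\bigcap_{j\in\emptyset}H_j(q)=\mathbb{Z}_q^k$ coming from \eqref{eq-def-int-planes} (consistent with Lemma~\ref{lem-num-ele-H}, which gives $\abs{H_\emptyset(q)}=q^k$). Then $H_\emptyset(q)\setminus\bigcup_{j\in E}H_j(q)=\mathbb{Z}_q^k\setminus\bigcup_{j\in E}H_j(q)$, so \eqref{eq-supp-HJ} reads $A_{G,n}(q)\cdot\abs{H_E(q)}=\abs{\mathbb{Z}_q^k\setminus\bigcup_{j\in E}H_j(q)}$. By Lemma~\ref{lem-num-ele-H} we have $\abs{H_E(q)}=\bigl(\prod_{j=1}^{r(E)}\gcd(q,e_{j,E})\bigr)q^{k-r(E)}>0$, so I may divide both sides by $\abs{H_E(q)}$ to get the first claimed equality. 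The second equality is then immediate from the definition of the characteristic quasi-polynomial, $\chi^{\mathrm{quasi}}_G(q)=\abs{\mathbb{Z}_q^k\setminus\bigcup_{j\in E}H_j(q)}$, in Theorem~\ref{thm-charquasi-KTT}.

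There is essentially no obstacle: the statement is a one-line corollary of Lemma~\ref{lem-crit-thm}. The only point that warrants a moment's care is confirming that the edge-case conventions ($J=\emptyset$, empty intersection equal to the whole ambient space, empty union equal to $\emptyset$) are exactly the ones in force in \eqref{eq-def-int-planes} and Lemma~\ref{lem-num-ele-H}, so that the substitution $J=\emptyset$ into \eqref{eq-supp-HJ} is legitimate and the division by $\abs{H_E(q)}\neq 0$ is valid.
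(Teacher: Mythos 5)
Your argument is exactly the paper's: the corollary is stated as the $J=\emptyset$ specialization of Lemma~\ref{lem-crit-thm}, and your careful check of the empty-set conventions and positivity of $\abs{H_E(q)}$ makes the one-liner airtight. No differences worth noting.
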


We now describe $A_{G,i}(q)$ for $i\in\{0,1,\dots,n\}$ using elementary divisors $e_{1,J}|e_{2,J}|\cdots|e_{r(J),J}$ and provide an explicit formula for $A_{G,i}(q)$ as a quasi-polynomial.
By the inclusion–exclusion principle,
\begin{align*}
   \abs{H_{J}(q) \setminus \bigcup_{j \in E \setminus J} H_{j}(q)} &= \abs{H_{J}(q)} + \sum_{j=0}^{n - \abs{J} -1} (-1)^{n - \abs{J} - j} \sum_{\substack{J^{\prime} \subseteq E \setminus J \\ \abs{J^{\prime}} = n - \abs{J} - j}} \abs{H_{J}(q) \cap H_{J^{\prime}}(q)} \\
   &= \sum_{j=0}^{n - \abs{J}} (-1)^{n - \abs{J} - j} \sum_{\substack{J^{\prime} \subseteq E \setminus J \\ \abs{J^{\prime}} = n - \abs{J} - j}} \abs{H_{J\cup J^{\prime}}(q)}.
\end{align*}
By Lemma \ref{lem-num-ele-H}, 
\begin{align*}
    \frac{\abs{H_{J\cup J^{\prime}}(q)}}{\abs{H_E(q)}}&=\frac{\prod_{\ell=1}^{r(J\cup J^{\prime})}\gcd(q,e_{\ell, J\cup J^{\prime}}) q^{k - r(J\cup J^{\prime})}}{\prod_{\ell=1}^{r(E)}\gcd(q,e_{\ell, E}) q^{k - r(E)}}
    =\frac{\prod_{\ell=1}^{r(J\cup J^{\prime})}\gcd(q,e_{\ell, J\cup J^{\prime}}) }{\prod_{\ell=1}^{r(E)}\gcd(q,e_{\ell, E})}q^{r(E) - r(J\cup J^{\prime})}
\end{align*}
for any $J^{\prime}\subseteq E \setminus J$.
Therefore, by Lemma \ref{lem-crit-thm}, we have
\begin{align}
    A_{G,i}(q) &= \sum_{\substack{J \subseteq E \\ \abs{J} = n - i}}\abs{\{ c \in C_G(q) \mid \operatorname{supp}(c) = E \setminus J \}} \notag\\
    &=\frac{\sum_{\substack{J \subseteq E \\ \abs{J} = n - i}}\abs{H_{J}(q) \setminus \bigcup_{j \in E \setminus J} H_{j}(q)}}{\abs{H_E(q)}} \notag\\
    &= \sum_{\substack{J \subseteq E \\ \abs{J} = n - i}} \sum_{j=0}^{i} (-1)^{i - j} \sum_{\substack{J^{\prime} \subseteq E \setminus J \\ \abs{J^{\prime}} = i-j}} \frac{\abs{H_{J\cup J^{\prime}}(q)}}{\abs{H_E(q)}} \notag\\
    &= \sum_{\substack{J \subseteq E \\ \abs{J} = n - i}} \sum_{j=0}^{i} (-1)^{i-j} \sum_{\substack{J^{\prime} \subseteq E \setminus J \\ \abs{J^{\prime}} = i-j}} \frac{\prod_{\ell = 1}^{r(J\cup J^{\prime})} \gcd(q,e_{\ell, J\cup J^{\prime}})}{\prod_{\ell = 1}^{r(E)} \gcd(q,e_{\ell, E})} q^{r(E) - r(J\cup J^{\prime})} \notag\\
    &= \sum_{\substack{J \subseteq E \\ \abs{J} = n - i}}\sum_{K:J \subseteq K \subseteq E} (-1)^{\abs{K} - \abs{J}} \frac{\prod_{\ell = 1}^{r(K)} \gcd(q,e_{\ell, K})}{\prod_{\ell = 1}^{r(E)} \gcd(q,e_{\ell, E})} q^{r(E) - r(K)},\label{eq-AGi-sum}
\end{align}
where $\sum_{K:J \subseteq K \subseteq E}$ means that $K$ runs the subset of $E$ with $J\subseteq K$.
Here we note that $r(E) - r(K)\geq 0$ for any $K$ with $J\subseteq K\subseteq E$.
Let $\rho_0$ be the lcm period of $G$.
We define the polynomial $f_{i}^{m}(t)\in\mathbb{Q}[t]$ as
\begin{align}\label{eq-def-f_i^m}
  f_{i}^{m}(t) 
  &\coloneq \sum_{\substack{J \subseteq E \\  \abs{J} = n - i}}\sum_{K:J \subseteq K \subseteq E} (-1)^{\abs{K} - \abs{J}} \frac{\prod_{\ell = 1}^{r(K)} \gcd(m,e_{\ell, K})}{\prod_{\ell = 1}^{r(E)} \gcd(m,e_{\ell, E})} t^{r(E) - r(K)}
\end{align}
for any $m\in[\rho_0]$ and for any $i\in\{0,1,\dots,n\}$.

Now, let $m\in[\rho_0]$ and let $q\in m+\rho_0\mathbb{Z}_{\geq 0}$.
We can write $q=m+a\rho_0$ for some $a\in\mathbb{Z}_{\geq 0}$.
Since $e_{\ell,J}|\rho_0$, we have $\gcd(q,e_{\ell,J})=\gcd(m+a\rho_0,e_{\ell,J})=\gcd(m,e_{\ell,J})$ for any $J\subseteq E$.
From the above argument, we have the following.
\begin{prop}\label{prop-quasipoly-AGi}
    The number $A_{G,i}(q)$ of codewords with weight $i$ contained in $C_G(q)$ is a quasi-polynomial over $\mathbb{Q}$ with a period $\rho_0$ that has the gcd property with respect to $\rho_0$.
More precisely, for any $m\in[\rho_0]$, we have
\begin{align*}    
    A_{G,i}(q)=f_{i}^{m}(q)\qquad \text{if}\quad q\equiv m \mod \rho_0,
\end{align*}
and $f_{i}^{m}(t)=f_{i}^{m^{\prime}}(t)$ if $\gcd(m,\rho_0)=\gcd(m^{\prime},\rho_0)$.    
\end{prop}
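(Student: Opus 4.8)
The plan is to extract the statement directly from the closed formula \eqref{eq-AGi-sum}, since all the structural work is already finished there. Fix $i\in\{0,1,\dots,n\}$. That formula exhibits $A_{G,i}(q)$ as a finite sum, over pairs $J\subseteq K\subseteq E$ with $\abs{J}=n-i$, of terms of the shape
$(-1)^{\abs{K}-\abs{J}}\bigl(\prod_{\ell=1}^{r(K)}\gcd(q,e_{\ell,K})\bigr)\bigl(\prod_{\ell=1}^{r(E)}\gcd(q,e_{\ell,E})\bigr)^{-1}q^{\,r(E)-r(K)}$,
in which $r(E)-r(K)\ge 0$, so the entire $q$-dependence resides in the $\gcd$-factors and in genuine nonnegative powers of $q$. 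The first step is to show that, restricted to a fixed residue class modulo $\rho_0$, each $\gcd$-factor becomes a constant.

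For this I would invoke the divisibility chain $e_{1,K}\mid e_{2,K}\mid\cdots\mid e_{r(K),K}$ together with $e_{r(K),K}\mid\rho_0$ (the latter by the definition of the lcm period), which gives $e_{\ell,K}\mid\rho_0$ for every nonempty $K\subseteq E$ and every $\ell\le r(K)$, and likewise for $E$. Writing $q=m+a\rho_0$ with $a\in\mathbb{Z}_{\ge0}$ then yields $\gcd(q,e_{\ell,K})=\gcd(m,e_{\ell,K})$ for all such $K$ and $\ell$, so substituting into \eqref{eq-AGi-sum} replaces every $\gcd(q,\cdot)$ by $\gcd(m,\cdot)$ while leaving each $q^{\,r(E)-r(K)}$ untouched; the resulting expression is exactly $f_i^m(q)$. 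Since the common denominator $\prod_{\ell=1}^{r(E)}\gcd(m,e_{\ell,E})$ is a fixed positive integer, $f_i^m(t)\in\mathbb{Q}[t]$. I would also record the indexing convention that the residue class $q\equiv 0\pmod{\rho_0}$ is labelled by $m=\rho_0$, which is consistent because $\gcd(\rho_0,e_{\ell,K})=e_{\ell,K}=\gcd(q,e_{\ell,K})$ whenever $\rho_0\mid q$ and $e_{\ell,K}\mid\rho_0$. This already establishes that $A_{G,i}$ is a quasi-polynomial over $\mathbb{Q}$ of period $\rho_0$ whose $m$-constituent is $f_i^m$.

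The remaining step is the gcd property. I would reduce the identity $f_i^m(t)=f_i^{m'}(t)$, for $\gcd(m,\rho_0)=\gcd(m',\rho_0)$, to the elementary fact that $\gcd(m,e_{\ell,K})=\gcd(\gcd(m,\rho_0),e_{\ell,K})$, which holds because $e_{\ell,K}\mid\rho_0$: any common divisor of $m$ and $e_{\ell,K}$ divides $\rho_0$, hence divides $\gcd(m,\rho_0)$, while the reverse divisibility is immediate. Thus every $\gcd$-factor occurring in $f_i^m$, and therefore $f_i^m$ itself, depends on $m$ only through $\gcd(m,\rho_0)$, as required.

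I do not expect a genuine obstacle here: the substantive content---Lemmas \ref{lem-num-ele-H} and \ref{lem-crit-thm} and the inclusion--exclusion collapse producing \eqref{eq-AGi-sum}---is already in hand. The only points that demand a little care are the elementary-divisor bookkeeping $e_{\ell,K}\mid\rho_0$ (which uses the full divisibility chain, not merely the last divisor $e_{r(K),K}$) and matching the labelling $m=\rho_0$ to the residue class $q\equiv 0$.
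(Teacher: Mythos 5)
Your proposal is correct and mirrors the paper's own argument almost exactly: both extract everything from the closed formula \eqref{eq-AGi-sum}, observe that $e_{\ell,K}\mid e_{r(K),K}\mid\rho_0$ forces $\gcd(q,e_{\ell,K})=\gcd(m,e_{\ell,K})$ for $q\equiv m\pmod{\rho_0}$, and read off the constituents $f_i^m$. The only difference is cosmetic — you spell out the gcd-property step via $\gcd(m,e_{\ell,K})=\gcd(\gcd(m,\rho_0),e_{\ell,K})$, whereas the paper leaves it implicit.
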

We recall that the minimum period of the characteristic quasi-polynomial coincides with $\rho_0$ by Theorem \ref{thm-charquasi-HTY}.
By Corollary \ref{cor-weight=n}, the minimum period of $A_{G,n}(q)$ is $\rho_0$.
\begin{cor}
    The function $W_G(x,y;-):\mathbb{Z}_{> 0}\to\mathbb{Z}[x,y]$ is a quasi-polynomial over $\mathbb{Q}[x,y]$ with the minimum period $\rho_{0}$ that has the gcd property with respect to $\rho_{0}$.
\end{cor}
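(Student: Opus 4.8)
The plan is to build $W_G(x,y;-)$ coefficientwise from the quasi-polynomials $A_{G,i}(q)$ produced in Proposition~\ref{prop-quasipoly-AGi}, and then to pin down the minimum period by inspecting only the top coefficient $A_{G,n}(q)$.

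For each $m\in[\rho_0]$, set
\begin{align*}
W^m(x,y;t)\coloneq\sum_{i=0}^{n} f_i^m(t)\,x^{n-i}y^i\in\mathbb{Q}[x,y][t],
\end{align*}
with $f_i^m(t)$ as in \eqref{eq-def-f_i^m}. Since $W_G(x,y;q)=\sum_{i=0}^{n} A_{G,i}(q)\,x^{n-i}y^i$ and $A_{G,i}(q)=f_i^m(q)$ whenever $q\equiv m\bmod\rho_0$ by Proposition~\ref{prop-quasipoly-AGi}, we get $W_G(x,y;q)=W^m(x,y;q)$ for all $q\equiv m\bmod\rho_0$. As $W_G(x,y;q)\in\mathbb{Z}[x,y]$ for every $q$ while $W^m(x,y;t)\in\mathbb{Q}[x,y][t]$, this exhibits $W_G(x,y;-)$ as a quasi-polynomial over $\mathbb{Q}[x,y]$ with period $\rho_0$ and $m$-constituent $W^m(x,y;t)$. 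The gcd property transfers termwise: Proposition~\ref{prop-quasipoly-AGi} gives $f_i^m(t)=f_i^{m'}(t)$ whenever $\gcd(m,\rho_0)=\gcd(m',\rho_0)$, hence $W^m(x,y;t)=W^{m'}(x,y;t)$ under the same hypothesis.

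It remains to prove that $\rho_0$ is the minimum period. Let $\rho'$ be any period of $W_G(x,y;-)$, with constituents $V^1(x,y;t),\dots,V^{\rho'}(x,y;t)\in\mathbb{Q}[x,y][t]$. Two equal polynomials in $\mathbb{Q}[x,y][t]$ have equal coefficients of each monomial $x^{n-i}y^i$, so comparing the coefficient of $y^n$ on both sides of $W_G(x,y;q)=V^{m'}(x,y;q)$ (valid for $q\equiv m'\bmod\rho'$) shows that $A_{G,n}(q)$ is a quasi-polynomial over $\mathbb{Q}$ with period $\rho'$. By Corollary~\ref{cor-weight=n} we have $A_{G,n}(q)=\chi^{\mathrm{quasi}}_G(q)/\abs{H_E(q)}$, and, as recalled just before the statement, Theorem~\ref{thm-charquasi-HTY} then forces the minimum period of $A_{G,n}(q)$ to be $\rho_0$; hence $\rho'\ge\rho_0$. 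Since $\rho_0$ is itself a period of $W_G(x,y;-)$ by the previous paragraph, $\rho_0$ is the minimum period, as claimed.

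The only substantive ingredient is this last sharpness claim — that the minimum period of $A_{G,n}(q)$ is exactly $\rho_0$ — which rests on Theorem~\ref{thm-charquasi-HTY}; all other steps are routine termwise transfers from Proposition~\ref{prop-quasipoly-AGi}, so I anticipate no real obstacle beyond correctly invoking that sharpness statement.
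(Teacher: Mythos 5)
Your proof is correct and follows essentially the same route as the paper: assemble $W_G(x,y;-)$ coefficientwise from Proposition~\ref{prop-quasipoly-AGi} to get periodicity and the gcd property with period $\rho_0$, then deduce minimality of the period from the top coefficient $A_{G,n}(q)$ via Corollary~\ref{cor-weight=n} and Theorem~\ref{thm-charquasi-HTY}, exactly the chain the paper invokes in the lines immediately preceding the Corollary.
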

We note that the minimum period of $A_{G,i}(q)$ is a divisor of $\rho_0$ for any $i\in\{0,1,\dots,n\}$, since $\rho_0$ is a period of $A_{G,i}(q)$.

In any of the examples listed in Section \ref{sec-examples}, when the coefficients $A_{G,i}(q)$ of $W_G(x,y;q)$ are regarded as polynomials in $q$, it can be seen that all degrees $0,1,\dots,r(E)$ appear.

\begin{thm}\label{thm-degreesfm-onebyone}
    Let $m\in[\rho_0]$.
    For any $s\in\{0,\dots, r(E)\}$, there exists an index $i$ with $\deg f_i^m(t)=s$.
    Moreover, for any $s\in \{1,\dots, r(E)\}$, the following holds that
    \begin{equation*}
        \min \{i\in \{0,1,\dots,n\}\mid \deg f_i^m(t)=s-1\}<\min \{i\in \{0,1,\dots,n\}\mid \deg f_i^m(t)=s\}.
    \end{equation*}
\end{thm}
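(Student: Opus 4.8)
The plan is to reduce the whole statement to a single monotonicity property of an auxiliary function. For $J\subseteq E$ introduce the polynomial
\[
p_J^m(t):=\sum_{K:\,J\subseteq K\subseteq E}(-1)^{|K|-|J|}\,c_K^m\,t^{\,r(E)-r(K)},\qquad c_K^m:=\frac{\prod_{\ell=1}^{r(K)}\gcd(m,e_{\ell,K})}{\prod_{\ell=1}^{r(E)}\gcd(m,e_{\ell,E})}>0,
\]
so that \eqref{eq-def-f_i^m} reads $f_i^m=\sum_{|J|=n-i}p_J^m$. By the computation leading to \eqref{eq-AGi-sum} together with Lemma~\ref{lem-crit-thm}, $p_J^m$ is precisely the $m$-constituent of the quasi-polynomial $q\mapsto\bigl|\{c\in C_G(q):\operatorname{supp}(c)=E\setminus J\}\bigr|$; in particular $p_J^m(q)\ge 0$ for every $q\in m+\rho_0\mathbb{Z}_{\geq 0}$, so $p_J^m$ is either identically $0$ or has positive leading coefficient. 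Hence the top-degree terms of a sum of such polynomials never cancel, which gives the two basic facts
\[
\deg f_i^m=\max_{|J|=n-i}\deg p_J^m,\qquad \deg p_J^m\le r(E)-r(J),
\]
with the convention $\deg 0=-\infty$. The second ingredient is the identity
\[
\sum_{K:\,J\subseteq K\subseteq E}p_K^m(t)=c_J^m\,t^{\,r(E)-r(J)},
\]
obtained by summing Lemma~\ref{lem-crit-thm} over all $K$ with $J\subseteq K\subseteq E$ (the sets $H_K(q)\setminus\bigcup_{j\in E\setminus K}H_j(q)$ partition $H_J(q)$) and evaluating $|H_J(q)|$ by Lemma~\ref{lem-num-ele-H}. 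Since its right-hand side has positive leading coefficient and degree $r(E)-r(J)$, the no-cancellation principle again yields
\begin{equation*}
\max\{\deg p_K^m : J\subseteq K\subseteq E\}=r(E)-r(J)\qquad\text{for every }J\subseteq E.\tag{$\ast$}
\end{equation*}

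Next I would record the boundary behaviour: $f_0^m=p_E^m$ is the constant $1$, so $\deg f_0^m=0$, and $f_n^m=p_\emptyset^m$ has leading term $c_\emptyset^m t^{\,r(E)}$ (equivalently, use Corollary~\ref{cor-weight=n} and the monicity of the characteristic quasi-polynomial), so $\deg f_n^m=r(E)$. Define $\delta(s):=\min\{i\in\{0,\dots,n\}:\deg f_i^m\ge s\}$ for $s\in\{0,\dots,r(E)\}$. It is well defined because $\deg f_n^m=r(E)$, it is non-decreasing, $\delta(0)=0$, and by the first paragraph $\delta(s)=n-\max\{|J|:\deg p_J^m\ge s\}$. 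The core claim is that $\delta(s-1)<\delta(s)$ for all $s\in\{1,\dots,r(E)\}$, i.e. $\max\{|J|:\deg p_J^m\ge s-1\}>\max\{|J|:\deg p_J^m\ge s\}$. To see this, pick $J^*$ attaining the right-hand maximum; then $\deg p_{J^*}^m\ge s$, so $r(J^*)\le r(E)-\deg p_{J^*}^m\le r(E)-s<r(E)$. Working in the matroid $M$ on $E$ with rank function $r(\cdot)$, choose $j\in E$ with $r(J^*\cup\{j\})=r(J^*)+1$ (possible since $r(J^*)<r(E)$) and set $F:=\operatorname{cl}_M(J^*\cup\{j\})$, a flat of rank $r(J^*)+1$ with $F\supsetneq J^*$ because $j\in F\setminus J^*$. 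Applying $(\ast)$ to $F$ produces $K'$ with $F\subseteq K'\subseteq E$ and $\deg p_{K'}^m=r(E)-r(F)=r(E)-r(J^*)-1\ge s-1$; since $|K'|\ge|F|>|J^*|$, the claim follows.

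Finally I would assemble the two assertions from strict monotonicity of $\delta$. This forces $\deg f_{\delta(s)}^m=s$ exactly: it is $\ge s$ by definition of $\delta$, and if it equalled some $s'>s$ (note $s'\le r(E)$ since $\deg f_i^m\le r(E)$ always) then $\delta(s)\in\{i:\deg f_i^m\ge s'\}$ would give $\delta(s')\le\delta(s)$, contradicting $\delta(s')\ge\delta(s+1)>\delta(s)$. Hence for each $s\in\{0,\dots,r(E)\}$ the index $i=\delta(s)$ realizes $\deg f_i^m=s$, which is the first assertion. Moreover $\{i:\deg f_i^m=s\}$ is contained in $\{i:\deg f_i^m\ge s\}$ and contains $\delta(s)$, so $\min\{i:\deg f_i^m=s\}=\delta(s)$; thus the displayed inequality of the theorem is exactly $\delta(s-1)<\delta(s)$, already proved.

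The conceptual heart of the argument is the identity $(\ast)$ together with the observation that non-negativity rules out cancellation of leading terms; once these are in place, the remainder is a short matroid-closure argument. The step requiring the most care is the bookkeeping around vanishing constituents: several of the $p_J^m$, hence several of the $f_i^m$, are identically zero — as one already sees in Example~\ref{example-B2-20/02} — so one must treat the convention $\deg 0=-\infty$ consistently and make sure the non-negativity argument genuinely identifies $\deg f_i^m$ with $\max_{|J|=n-i}\deg p_J^m$ rather than merely bounding it. The matroid input itself is routine: every proper flat of $M$ is properly contained in a flat whose rank is one larger.
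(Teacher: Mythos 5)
Your proof is correct, but it takes a genuinely different route from the paper. The paper works directly with the explicit formula for $f_i^m$: it introduces the families $\mathcal{J}_s'=\{J\subseteq E\mid r(E)-r(J)=s\}$ and $\mathcal{J}_s\subseteq\mathcal{J}_s'$ of maximum cardinality $\iota_s$, sets $i_s:=n-\iota_s$, and verifies $\deg f^m_{i_s}=s$ by a careful term-by-term degree analysis (the $\mathcal{J}_s$-terms contribute a positive coefficient to $t^s$, while all other terms with $|K|\geq\iota_s$ satisfy $r(E)-r(K)<s$); it then shows $i_{s-1}<i_s$ and $i_s=\min\{i:\deg f_i^m=s\}$. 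Your argument instead isolates the ``cell'' polynomials $p_J^m$, establishes their non-negativity from the coding-theoretic interpretation of Lemma~\ref{lem-crit-thm} (so that nonzero $p_J^m$ have positive leading coefficients and no cancellation occurs), and proves the clean Möbius-type identity $\sum_{K\supseteq J}p_K^m(t)=c_J^m\,t^{r(E)-r(J)}$ — an ingredient that has no direct counterpart in the paper's proof. Combined, these give $\deg f_i^m=\max_{|J|=n-i}\deg p_J^m$ and the key equality $(\ast)$, after which the strict monotonicity of $\delta(s)$ follows from a one-line rank-increase argument (the matroid closure operator is harmless but unnecessary — applying $(\ast)$ to $J^*\cup\{j\}$ directly already gives $|K'|>|J^*|$). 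What the paper's approach buys is a very concrete description of the witness indices $i_s$ and their independence of $m$, which they then reuse (Corollary~\ref{cor-deffd'm} and the proof of Theorem~\ref{thm-mindist-ifandonlyif}); your approach buys a more conceptual explanation of why the maximum degree $r(E)-r(J)$ is attained among the supersets of any fixed $J$, via positivity rather than a direct coefficient comparison. Your own closing caveat about treating $\deg 0=-\infty$ consistently is well taken and is handled correctly in the write-up.
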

To prove Theorem \ref{thm-degreesfm-onebyone}, we add several definitions.
Let $s\in\{0,1,\dots,r(E)\}$.
Define $\mathcal{J}^{\prime}_s \coloneq \{J\in 2^E\mid r(E)-r(J)=s\}$, and we see $\mathcal{J}^{\prime}_s\neq\emptyset$.
Indeed, starting from the empty set, the rank increases by at most $1$ every time a new element is added, and thus there exists a subset $J$ of $E$ such that $r(J)=r(E)-s$.
Then, we define the set $\mathcal{J}_s$ as the subset of $\mathcal{J}^{\prime}_s$ whose elements have the maximum cardinality in $\mathcal{J}^{\prime}_s$.
The set $\mathcal{J}_s$ is also nonempty.
Let $\iota_s$ be the cardinality of elements in $\mathcal{J}_s$, that is, $\iota_s=\abs{J}$ for any $J\in\mathcal{J}_s$.
Define $i_s \coloneq n-\iota_s$.
\begin{lem}\label{lem-degreesfm-onebyone}
    Let $m\in[\rho_0]$. Then the following hold.
    \begin{itemize}
        \item[\textup{(1)}] $i_0<i_1<\cdots<i_{r(E)}$.
        \item[\textup{(2)}] $\deg f^m_{i_s}(t)=s$ for any $s\in\{0,1,\dots,r(E)\}$.
        \item[\textup{(3)}] If $\deg f^m_{i}(t)=s$, then we have $i_s\leq i$, i.e., $i_s=\min \{i\in \{0,1,\dots,n\}\mid \deg f_i^m(t)=s\}$.
    \end{itemize}
\end{lem}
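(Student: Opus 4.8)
The plan is to analyze the polynomial $f_i^m(t)$ through its leading behavior in $t$. Looking at the defining formula \eqref{eq-def-f_i^m}, each inner term contributes a monomial $t^{r(E)-r(K)}$ with a nonzero rational coefficient $\pm\prod_{\ell}\gcd(m,e_{\ell,K})/\prod_{\ell}\gcd(m,e_{\ell,E})$. Thus $\deg f_i^m(t)\le\max\{r(E)-r(K)\mid J\subseteq K\subseteq E,\ |J|=n-i\}=r(E)-\min\{r(K)\mid J\subseteq K,\ |J|=n-i\}$, with equality provided the top-degree coefficients do not all cancel. The first thing I would do is establish a \emph{grouping} of the sum in \eqref{eq-def-f_i^m} by the value of $r(K)$: for a fixed $J$ with $|J|=n-i$, collect all $K$ with $J\subseteq K\subseteq E$ and $r(K)=r(E)-s$ (equivalently $K\in\mathcal{J}'_s$) and show that, among these, the ones of maximal cardinality cannot be cancelled by anything else, because the sign $(-1)^{|K|-|J|}$ and the cardinality are linked. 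This is the mechanism that forces the leading coefficient to be nonzero at the ``extreme'' indices.

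For part \textup{(1)}, I would argue combinatorially: if $J\in\mathcal{J}_s$, i.e. $r(E)-r(J)=s$ with $|J|$ maximal among such $J$, then adding an element $e\notin J$ to $J$ that keeps the rank fixed is impossible (otherwise $J\cup\{e\}$ would be a larger set in $\mathcal{J}'_s$); hence for every $e\notin J$, $r(J\cup\{e\})=r(J)+1$. Conversely, one can show that for $J\in\mathcal{J}_s$ with $s\ge1$, removing a suitably chosen element drops the rank by exactly $1$ and the resulting set, after possibly adding back rank-preserving elements, lands in $\mathcal{J}_{s-1}$ with strictly larger cardinality. More directly: any $J\in\mathcal{J}_{s-1}$ has $r(J)=r(E)-s+1>r(E)-s$, and from it one extracts a subset of rank $r(E)-s$; the maximal such subset has cardinality $\iota_s$, and a counting argument (each rank drop forces removal of at least one column, but the maximal-cardinality representative in $\mathcal{J}_{s-1}$ already contains all rank-preserving columns it can) yields $\iota_s<\iota_{s-1}$, hence $i_s=n-\iota_s>n-\iota_{s-1}=i_{s-1}$. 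The key point is that $\mathcal{J}_s$ consists of sets that are ``closed'' under adding rank-preserving columns, i.e. they are exactly the closed sets (flats of the associated matroid) of corank $s$ of maximal size—so $\iota_s$ is the size of the largest flat of corank $s$, and these sizes are strictly decreasing in $s$.

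For part \textup{(2)}, with $i=i_s$ and $J\in\mathcal{J}_s$ fixed, I would isolate the coefficient of $t^s$ in $f_{i_s}^m(t)$: by part \textup{(1)}'s closure property, the only $K\supseteq J$ with $r(E)-r(K)=s$ is $K=J$ itself (since $J$ is a maximal-size flat of corank $s$, no proper superset has the same rank), giving a single uncancelled term $+\prod_{\ell}\gcd(m,e_{\ell,J})/\prod_{\ell}\gcd(m,e_{\ell,E})$, which is a strictly positive rational, so the $t^s$-coefficient from this $J$ is nonzero; one must also check that other $J'\in\mathcal J_s$ of the same size contribute only nonnegative multiples (they do, since each contributes $+$ its own positive ratio with no higher-degree interference), so no cancellation across the outer sum occurs, and degrees from $J\notin\mathcal J_s$ are strictly smaller. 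Hence $\deg f_{i_s}^m(t)=s$. For part \textup{(3)}, suppose $\deg f_i^m(t)=s$; then some $K$ with $|K|\ge n-i$ lying over some $J$ of size $n-i$ has $r(E)-r(K)=s$, so there is a flat (closure of that $K$) of corank $s$ containing a set of size $\ge n-i$, whence the largest flat of corank $s$ has size $\iota_s\ge n-i$, i.e. $i\ge n-\iota_s=i_s$. I expect the \textbf{main obstacle} to be the non-cancellation argument in part \textup{(2)}: one must be careful that summing over \emph{all} $J$ of cardinality $n-i_s$ (not just those in $\mathcal{J}_s$) and over all $K\supseteq J$ does not produce a cancellation of the top-degree term; the resolution is that every contribution to the coefficient of $t^s$ is a \emph{positive} rational multiple (the sign $(-1)^{|K|-|J|}$ is $+1$ precisely when $K=J$, which is the only possibility at the extreme degree for sets $J$ achieving corank $s$), so the sum of positive terms cannot vanish.
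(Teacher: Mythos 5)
Your overall approach matches the paper's: you analyze the degree of $f_i^m(t)$ through the corank exponent $r(E)-r(K)$, identify $\mathcal{J}_s$ as the maximal-size sets of corank $s$ (equivalently, the largest flats of corank $s$), and use the positivity of the $\gcd$-ratios to rule out cancellation of the top-degree term. Your arguments for parts \textup{(2)} and \textup{(3)} are essentially the paper's, and your observation that each $J\in\mathcal{J}_s$ is a flat (so $K=J$ is the unique $K\supseteq J$ with corank $s$) is a clean refinement of what the paper does implicitly.

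Part \textup{(1)}, however, has a direction error that is a genuine gap. You write that ``removing a suitably chosen element drops the rank by exactly $1$ and the resulting set, after possibly adding back rank-preserving elements, lands in $\mathcal{J}_{s-1}$.'' Removing an element from $J\in\mathcal{J}_s$ and then re-adding rank-preserving elements produces a set of rank at most $r(J)=r(E)-s$, i.e.\ corank at least $s$, so you land in $\mathcal{J}'_{s}$ or $\mathcal{J}'_{s+1}$, never $\mathcal{J}_{s-1}$. Your ``more directly'' alternative also fails: the maximal subset of a fixed $J_1\in\mathcal{J}_{s-1}$ having rank $r(E)-s$ is just one corank-$s$ set and need not have cardinality $\iota_s$, so no inequality between $\iota_s$ and $\iota_{s-1}$ follows. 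The fix is to go in the opposite direction, and your own first observation already supplies it: since every $J\in\mathcal{J}_s$ satisfies $r(J\cup\{e\})=r(J)+1$ for all $e\notin J$ (and such $e$ exists because $r(J)<r(E)$), the set $J\cup\{e\}$ has corank $s-1$ and cardinality $\iota_s+1$, giving $\iota_{s-1}\ge\iota_s+1>\iota_s$ and hence $i_{s-1}<i_s$. This is exactly the paper's argument (take $J_2\in\mathcal{J}_s$, extend it to rank $r(E)-s+1$, and compare with the maximality defining $\iota_{s-1}$). Note also that part \textup{(2)}'s claim that ``degrees from $J\notin\mathcal{J}_s$ are strictly smaller'' silently uses $\iota_{s'}<\iota_s$ for $s'>s$, so it depends on part \textup{(1)} being in place.
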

\begin{proof}
    \textup{(1)}\quad We prove that $i_{s-1}<i_{s}$ for any $s\in[r(E)]$.
    Let $J_1\in\mathcal{J}_{s-1}$ and let $J_2\in\mathcal{J}_{s}$.
    Since $r(J_1)=r(E)-s+1>r(E)-s=r(J_2)$, there exists a nonempty subset $K$ of $E\setminus J_2$ such that $r(J_1)=r(J_2\cup K)$.
    If $\abs{J_1}\leq \abs{J_2}$, then $\abs{J_1}<\abs{J_2\cup K}$ and $r(E)-s+1=r(J_1)=r(J_2\cup K)$ which contradicts the definition of $\mathcal{J}_{s-1}$.
    Thus, we have $\abs{J_1} > \abs{J_2}$.
    In other words, $\iota_{s-1} > \iota_{s}$.
    Therefore, we have $i_{s-1}<i_{s}$.
    
    \textup{(2)}\quad We first prove that $r(E)-r(K)<s$ for any $K\in2^E\setminus\mathcal{J}_s$ with $\abs{K}\geq \iota_s$.
    Let $K$ be an element in $2^E\setminus\mathcal{J}_s$ with $\abs{K}\geq \iota_s$.
    If $r(K) < r(E)-s$, then there exists a nonempty subset $K^{\prime}$ of $E\setminus K$ such that $r(K\cup K^{\prime})=r(E)-s$.
    Since $\abs{K\cup K^{\prime}} > \iota_s$ and $r(E)-r(K\cup K^{\prime})=s$, this is a contradiction to the definition of $\mathcal{J}_s$.
    If $r(K) = r(E)-s$ and $\abs{K}>\iota_s$, then this also contradicts the definition of $\mathcal{J}_s$.
    If $r(K) = r(E)-s$ and $\abs{K}=\iota_s$, then we have $K\in\mathcal{J}_s$, a contradiction.
    Thus, $r(E)-r(K)<s$.
    
    Since $r(E)-r(J)=s$ and $\abs{J}-n+i_s=0$ for any $J\in\mathcal{J}_s$, by Equation \eqref{eq-def-f_i^m}, we have
    \begin{align*}
        f_{i_s}^m(t)&=\sum_{\substack{J \subseteq E \\ \abs{J} = n - i_s}}\sum_{K:J \subseteq K \subseteq E} (-1)^{\abs{K} - \abs{J}} \frac{\prod_{\ell = 1}^{r(K)} \gcd(m,e_{\ell, K})}{\prod_{\ell = 1}^{r(E)} \gcd(m,e_{\ell, E})} t^{r(E) - r(K)}\\
        &=\sum_{\substack{J \subseteq E \\ \abs{J} \geq n - i_s}} (-1)^{\abs{J} - n + i_s} \frac{\prod_{\ell = 1}^{r(J)} \gcd(m,e_{\ell, J})}{\prod_{\ell = 1}^{r(E)} \gcd(m,e_{\ell, E})} t^{r(E) - r(J)}\\
        &=\sum_{J\in\mathcal{J}_s} \frac{\prod_{\ell = 1}^{r(J)} \gcd(m,e_{\ell, J})}{\prod_{\ell = 1}^{r(E)} \gcd(m,e_{\ell, E})} t^{s} + \sum_{\substack{K \in 2^E\setminus \mathcal{J}_s \\ \abs{K} \geq n - i_s}} (-1)^{\abs{K} - n + i_s} \frac{\prod_{\ell = 1}^{r(K)} \gcd(m,e_{\ell, K})}{\prod_{\ell = 1}^{r(E)} \gcd(m,e_{\ell, E})} t^{r(E) - r(K)}.
    \end{align*}
    Therefore, $\deg f^m_{i_s}(t)=s$ since $\displaystyle\frac{\prod_{\ell = 1}^{r(J)} \gcd(m,e_{\ell, J})}{\prod_{\ell = 1}^{r(E)} \gcd(m,e_{\ell, E})}>0$ for any $J\in\mathcal{J}_s$.

    \textup{(3)}\quad By Equation \eqref{eq-def-f_i^m}, the condition $\deg f^m_{i}(t)=s$ implies that there exists $J\in 2^E$ such that $\abs{J}\geq n-i$ and $r(E)-r(J)=s$.
    By the definition of $\iota_s$, we have $\abs{J} \leq \iota_s$.
    Therefore $i_s=n-\iota_s\leq n-\abs{J}\leq n-(n-i)=i$.
\end{proof}
Lemma \ref{lem-degreesfm-onebyone} immediately implies Theorem \ref{thm-degreesfm-onebyone}.
Furthermore, indices $i_0,i_1,\dots, i_{r(E)}$ do not depend on $m$.
In other words, for each degree $s\in\{0,1,\dots,r(E)\}$, there exists an index corresponding to that degree, independent of $m$.
Note that, Theorem~\ref{thm-degreesfm-onebyone} does not show that the degree of the polynomial is monotonic with respect to $i$. Indeed, there are examples where $\deg f^m_{i-1}> \deg f^m_{i}$ for some $i\in [n]$.
For details, see Proposition~\ref{prop-vanished-weight-Nk-Zk} and Example~\ref{ex-Z5}.

\subsection{Periods of minimum weights}\label{sec-min-preiod-weight}
In what follows, we use the fact that $A_{G,i}(q)$ is a quasi-polynomial to discuss the minimum weight of the code.

\begin{defi}
    For $m\in[\rho_0]$, we define $d_m^{\prime}$ as the smallest positive integer $i>0$ such that $f_{i}^{m}(t)$ is nonzero polynomial, that is,
    \begin{align*}
        d_m^{\prime} \coloneq \min\{i\in\mathbb{Z}_{>0}\mid f_{i}^{m}(t)\neq 0\ \text{as a polynomial}\}.
    \end{align*}
\end{defi}
Theorem \ref{thm-degreesfm-onebyone} immediately implies the following.
\begin{cor}\label{cor-deffd'm}
    For any $m\in[\rho_0]$, we have $\deg f^m_{d^{\prime}_m}(t)\leq 1$.
\end{cor}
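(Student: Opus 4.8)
The plan is to read the bound off directly from Lemma~\ref{lem-degreesfm-onebyone}, which for each degree $s$ identifies the least index $i$ whose constituent $f^m_i(t)$ has that degree as $i_s$, and orders these thresholds as $i_0<i_1<\cdots<i_{r(E)}$. First I would check that $d_m^{\prime}$ is well defined and that $d_m^{\prime}\le i_1$. Since $G$ has no zero column, $r(E)\ge 1$; moreover $\mathcal{J}^{\prime}_0=\{J\subseteq E\mid r(J)=r(E)\}$ has $E$ as its unique cardinality-maximal element, so $\iota_0=n$ and $i_0=0$, whence $i_1\ge 1$. By Lemma~\ref{lem-degreesfm-onebyone}\,(2), $\deg f^m_{i_1}(t)=1$, so $f^m_{i_1}(t)\ne 0$; thus $i_1$ is a positive index with nonzero constituent, the set defining $d_m^{\prime}$ is nonempty, and $d_m^{\prime}\le i_1$.

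Next I would argue by contradiction. Let $s$ be the degree of the nonzero polynomial $f^m_{d_m^{\prime}}(t)$, and suppose $s\ge 2$. Applying Lemma~\ref{lem-degreesfm-onebyone}\,(3) to the index $d_m^{\prime}$ gives $i_s\le d_m^{\prime}$, and then part (1) yields $i_{s-1}<i_s\le d_m^{\prime}$. But part (2) gives $\deg f^m_{i_{s-1}}(t)=s-1\ge 1$, so $f^m_{i_{s-1}}(t)$ is a nonzero polynomial, and $i_{s-1}\ge i_1\ge 1>0$. Hence $i_{s-1}$ is a positive integer with $f^m_{i_{s-1}}(t)\ne 0$ and $i_{s-1}<d_m^{\prime}$, contradicting the minimality in the definition of $d_m^{\prime}$. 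Therefore $s\le 1$, which is exactly the assertion $\deg f^m_{d_m^{\prime}}(t)\le 1$.

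There is no substantive obstacle here: all the real work is already contained in Theorem~\ref{thm-degreesfm-onebyone} and its supporting Lemma~\ref{lem-degreesfm-onebyone}, and what remains is bookkeeping with the chain $i_0<i_1<\cdots<i_{r(E)}$. The only point worth stating carefully is $i_0=0$, since it guarantees that $i_1$ is a genuinely positive index and hence that $d_m^{\prime}$ exists and does not exceed $i_1$.
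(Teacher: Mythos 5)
Your proof is correct and follows essentially the same route as the paper's: use Lemma~\ref{lem-degreesfm-onebyone} to produce a positive index strictly less than $d'_m$ whose constituent is a nonzero polynomial, contradicting minimality. Your version is in fact slightly more careful than the paper's one-line argument, which writes the contradiction hypothesis as $\deg f^m_{d'_m}(t)>2$ where it should be $\ge 2$, and your explicit check that $i_0=0$ and that $d'_m$ is well defined (via $d'_m\le i_1$) is a worthwhile clarification.
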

\begin{proof}
    If $\deg f^m_{d^{\prime}_m}(t)>2$, then $i_1<i_2\leq d^{\prime}_m$ by Theorem \ref{thm-degreesfm-onebyone}.
    Since $f^m_{i_1}(t)$ is a nonzero polynomial, this is a contradiction.
\end{proof}
Note that there are cases where the degree of $f^m_{d^{\prime}_m}(t)$ is $0$.
(see Example \ref{example-B2-20/02} \textup{(1)}, Example \ref{ex-kerdock2}, and Example \ref{ex-P8}.)
In these cases, the intersection of several hyperplanes forms an affine zero-dimensional space excluding the origin.

Furthermore, $f^m_{d^{\prime}_m}(t)$ can be represented by the following closed formula.
\begin{prop}\label{prop-f_d'm}
    For $m\in[\rho_0]$, we have
    \begin{align}\label{eq-f_d'm}
        f_{d_m^{\prime}}^{m}(t) = \sum_{\substack{J \subseteq E \\ \abs{J} = n - d_m^{\prime}}} \left( \frac{\prod_{\ell = 1}^{r(J)} \gcd(m,e_{\ell, J})}{\prod_{\ell = 1}^{r(E)} \gcd(m,e_{\ell, E})}   t^{r(E) - r(J)} - 1 \right).
    \end{align}
\end{prop}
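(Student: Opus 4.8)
The plan is to isolate the individual summands of $f_{i}^{m}$ indexed by subsets of $E$ and to exploit that, for $q$ ranging over a fixed residue class modulo $\rho_0$, each such summand counts codewords with a prescribed support and is hence nonnegative. For $J\subseteq E$ I set
\[
h_{J}^{m}(t):=\frac{\prod_{\ell=1}^{r(J)}\gcd(m,e_{\ell,J})}{\prod_{\ell=1}^{r(E)}\gcd(m,e_{\ell,E})}\,t^{r(E)-r(J)},\qquad
g_{J}^{m}(t):=\sum_{K:\,J\subseteq K\subseteq E}(-1)^{\abs{K}-\abs{J}}h_{K}^{m}(t),
\]
so that Equation \eqref{eq-def-f_i^m} reads $f_{i}^{m}(t)=\sum_{\abs{J}=n-i}g_{J}^{m}(t)$. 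Combining Lemma \ref{lem-crit-thm}, Lemma \ref{lem-num-ele-H}, the inclusion--exclusion expansion used to derive \eqref{eq-AGi-sum}, and the fact that $\gcd(q,e_{\ell,K})=\gcd(m,e_{\ell,K})$ whenever $q\equiv m\pmod{\rho_0}$ (because $e_{\ell,K}\mid\rho_0$), I first verify that
\[
g_{J}^{m}(q)=\abs{\{c\in C_G(q)\mid \operatorname{supp}(c)=E\setminus J\}}\ \ge\ 0\qquad\text{for all }q>0\text{ with }q\equiv m\pmod{\rho_0}.
\]

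Next I would bring in two further ingredients. First, Möbius inversion on the Boolean lattice $2^{E}$ converts the definition of $g_{J}^{m}$ into $h_{J}^{m}(t)=\sum_{K:\,J\subseteq K\subseteq E}g_{K}^{m}(t)$ for every $J\subseteq E$, and I record the boundary value $g_{E}^{m}(t)=h_{E}^{m}(t)=1$. Second, I claim that $g_{J}^{m}\equiv 0$ as a polynomial whenever $n-d_{m}^{\prime}<\abs{J}<n$: writing $i:=n-\abs{J}$ one has $0<i<d_{m}^{\prime}$, hence $f_{i}^{m}\equiv 0$ by the definition of $d_{m}^{\prime}$; therefore $\sum_{\abs{J^{\prime}}=n-i}g_{J^{\prime}}^{m}(q)=0$ for every $q>0$ with $q\equiv m\pmod{\rho_0}$, and since each summand is nonnegative at each such $q$, every $g_{J^{\prime}}^{m}$ vanishes at infinitely many integers and is therefore the zero polynomial.

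Finally, for each fixed $J\subseteq E$ with $\abs{J}=n-d_{m}^{\prime}$ I would split the sum $h_{J}^{m}(t)=\sum_{K:\,J\subseteq K\subseteq E}g_{K}^{m}(t)$ according to whether $K=J$, $J\subsetneq K\subsetneq E$, or $K=E$. Each intermediate $K$ satisfies $n-d_{m}^{\prime}<\abs{K}<n$, so its term vanishes by the claim above, while the $K=E$ term equals $1$; hence $h_{J}^{m}(t)=g_{J}^{m}(t)+1$, that is, $g_{J}^{m}(t)=h_{J}^{m}(t)-1$. Summing over all $J$ with $\abs{J}=n-d_{m}^{\prime}$ then gives $f_{d_{m}^{\prime}}^{m}(t)=\sum_{\abs{J}=n-d_{m}^{\prime}}\bigl(h_{J}^{m}(t)-1\bigr)$, which is precisely \eqref{eq-f_d'm}. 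Everything here is formal bookkeeping with inclusion--exclusion and Möbius inversion except for the vanishing claim, which I expect to be the main point: one must use the combinatorial nonnegativity of the individual $g_{J}^{m}(q)$, rather than only the polynomial identity $f_{i}^{m}\equiv 0$, in order to deduce that each term vanishes from the vanishing of their sum.
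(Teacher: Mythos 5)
Your proof is correct, and it takes a genuinely different route from the paper's. The paper fixes a $J$ with $\abs{J}=n-d_m^{\prime}$ and argues by contradiction that $H_K(q)=H_E(q)$ for every $K$ with $J\subsetneq K\subseteq E$ and every $q>m$ with $\gcd(q,\rho_0)=m$ (otherwise a codeword of positive weight less than $d_m^{\prime}$ would exist), then substitutes $\abs{H_K(q)}/\abs{H_E(q)}=1$ into the inclusion--exclusion expansion and uses $\sum_{K\supseteq J}(-1)^{\abs{K}-\abs{J}}=0$ before passing from infinitely many $q$ to a polynomial identity. You instead work at the level of the per-support counts $g_K^m$: Möbius inversion on $2^E$ turns the definition around to $h_J^m=\sum_{K\supseteq J}g_K^m$, the definition of $d_m^{\prime}$ plus the nonnegativity of each $g_K^m(q)$ on the residue class forces $g_K^m\equiv 0$ for all $K$ with $n-d_m^{\prime}<\abs{K}<n$, and then the telescoping $h_J^m=g_J^m+1$ drops out immediately. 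The two arguments rest on the same underlying fact (no nonzero codeword of weight strictly between $0$ and $d_m^{\prime}$), but your formulation isolates that fact cleanly as the vanishing of the intermediate $g_K^m$, keeps the bookkeeping entirely at the polynomial level after the initial nonnegativity observation, and avoids the paper's detour through the equality of the intersections $H_K(q)$ and $H_E(q)$; the paper's version, conversely, is more self-contained since it does not invoke Möbius inversion explicitly. You are right to flag the nonnegativity of the individual summands as the essential point: the polynomial identity $f_i^m\equiv 0$ alone would not force each term to vanish.
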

\begin{proof}
    First, we recall that $H_E(q)\subseteq H_K(q)$ for any $K\subseteq E$ and $q\in\mathbb{Z}_{>0}$.
    
    Let $J$ be a subset of $E$ with $\abs{J}=n-d_m^{\prime}$.
    We show $H_E(q) = H_K(q)$ for any $J\subsetneq K\subseteq E$ by contradiction.
    We assume that there exist an integer $q$ with $\gcd(q,\rho_0)=m$ and $q>m$, and a set $K$ with $J\subsetneq K \subseteq E$ such that $H_E(q)\subsetneq H_K(q)$.
    Then, we can take $u\in H_K(q)\setminus H_E(q)$.
    Since $\emptyset\subsetneq \operatorname{supp}(uG)\subseteq E\setminus K$, we have
    \begin{align*}
        0 < \operatorname{wt}(uG) = \abs{\operatorname{supp}(uG)} \leq \abs{E\setminus K} = \abs{E} - \abs{K} < \abs{E} - \abs{J} = d_m^{\prime}.
    \end{align*}
    This means that there exists a codeword $c\in C_G(q)$ such that $0 < \operatorname{wt}(c) < d_m^{\prime}$.
    However, this contradicts the minimality of $d_m^{\prime}$.
    From the above discussion, for any integer $q$ with $\gcd(q,\rho_0)=m$ and $q>m$ and for any set $K$ with $J\subsetneq K \subseteq E$, we have $H_E(q) = H_K(q)$.

    Here, by Proposition \ref{prop-quasipoly-AGi}, for any $q\in\mathbb{Z}_{>0}$ with $\gcd(q,\rho_0)=m$ and $q>m$,
    \begin{align}\label{eq-f_d'mq=sum}
        f_{d_m^{\prime}}^{m}(q) &= A_{G,d_m^{\prime}}(q) = \sum_{\substack{J \subseteq E \\ \abs{J} = n - d_m^{\prime}}}\sum_{K:J \subseteq K \subseteq E} (-1)^{\abs{K} - \abs{J}} \frac{\abs{H_K(q)}}{\abs{H_E(q)}}\notag\\
        &= \sum_{\substack{J \subseteq E \\ \abs{J} = n - d_m^{\prime}}}\left(\frac{\abs{H_J(q)}}{\abs{H_E(q)}} + \sum_{K: J \subsetneq K \subseteq E} (-1)^{\abs{K} - \abs{J}} \right)\notag\\
        &= \sum_{\substack{J \subseteq E \\ \abs{J} = n - d_m^{\prime}}}\left(\frac{\abs{H_J(q)}}{\abs{H_E(q)}} - 1 + \sum_{K:J \subseteq K \subseteq E} (-1)^{\abs{K} - \abs{J}}\right)\notag\\
        &= \sum_{\substack{J \subseteq E \\ \abs{J} = n - d_m^{\prime}}}\left(\frac{\prod_{\ell = 1}^{r(J)} \gcd(m,e_{\ell, J})}{\prod_{\ell = 1}^{r(E)} \gcd(m,e_{\ell, E})}   q^{r(E) - r(J)} -1 \right),
    \end{align}
    where, in the above equalities, we use the fact that
    \begin{align*}
        \sum_{K:J \subseteq K \subseteq E} (-1)^{\abs{K} - \abs{J}} = \sum_{i=0}^{\abs{E\setminus J}}\binom{\abs{E\setminus J}}{i}(-1)^i=(1-1)^{\abs{E\setminus J}}=0.
    \end{align*}
    Since Equation \eqref{eq-f_d'mq=sum} holds for infinitely many $q$, we see that
    \begin{align*}
        f_{d_m^{\prime}}^{m}(t) = \sum_{\substack{J \subseteq E \\ \abs{J} = n - d_m^{\prime}}}\left(\frac{\prod_{\ell = 1}^{r(J)} \gcd(m,e_{\ell, J})}{\prod_{\ell = 1}^{r(E)} \gcd(m,e_{\ell, E})}   t^{r(E) - r(J)} -1 \right)
    \end{align*}
    as a polynomial.
\end{proof}
Since the coefficient of $t^{r(E)-r(J)}$ is positive for any term of the right hand side of Equation \eqref{eq-f_d'm}, Corollary \ref{cor-deffd'm} and Proposition \ref{prop-f_d'm} imply the following corollary.
\begin{cor}\label{cor-rE-rJ<=1}
    For any $J\subseteq E$ with $\abs{J}=n-d^{\prime}_m$, we have $0\leq r(E)-r(J)\leq 1$.
\end{cor}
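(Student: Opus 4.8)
The plan is to deduce the corollary directly from the explicit formula in Proposition \ref{prop-f_d'm} together with the degree bound of Corollary \ref{cor-deffd'm}; no new idea is needed. Fix $m\in[\rho_0]$. By \eqref{eq-f_d'm},
\[
f_{d_m^{\prime}}^{m}(t) = \sum_{\substack{J \subseteq E \\ \abs{J} = n - d_m^{\prime}}} \left( c_J\, t^{\,r(E) - r(J)} - 1 \right),
\qquad
c_J \coloneq \frac{\prod_{\ell = 1}^{r(J)} \gcd(m,e_{\ell, J})}{\prod_{\ell = 1}^{r(E)} \gcd(m,e_{\ell, E})},
\]
and every $c_J$ is a strictly positive rational. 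The crucial observation is that the only negative contributions on the right-hand side are the constants $-1$, all living in degree $0$; hence for each integer $s\ge 1$ the coefficient of $t^{s}$ in $f_{d_m^{\prime}}^{m}(t)$ equals $\sum c_J$ extended over the (possibly empty) set of $J$ with $\abs{J}=n-d_m^{\prime}$ and $r(E)-r(J)=s$. In particular no cancellation can occur in positive degrees, so that coefficient is strictly positive as soon as at least one such $J$ exists.

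From here I would argue by contradiction for the upper bound. Note first that $r(E)-r(J)\ge 0$ for every $J\subseteq E$, since $J\subseteq E$ forces $r(J)\le r(E)$. Now suppose some $J$ with $\abs{J}=n-d_m^{\prime}$ satisfied $r(E)-r(J)=s\ge 2$. Then, by the previous paragraph, the coefficient of $t^{s}$ in $f_{d_m^{\prime}}^{m}(t)$ is at least $c_J>0$, whence $\deg f_{d_m^{\prime}}^{m}(t)\ge s\ge 2$. This contradicts Corollary \ref{cor-deffd'm}, which asserts $\deg f_{d_m^{\prime}}^{m}(t)\le 1$. Therefore $r(E)-r(J)\le 1$ for every $J\subseteq E$ with $\abs{J}=n-d_m^{\prime}$, and together with the trivial lower bound this gives the claim.

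I do not expect a genuine obstacle: the whole argument is a one-line consequence of Proposition \ref{prop-f_d'm} and Corollary \ref{cor-deffd'm}. The single point that deserves a sentence of care is the ``no cancellation'' remark above---one must observe that the subtracted $1$'s are confined to degree $0$, so that the coefficient of $t^{\,r(E)-r(J)}$ coming from a fixed $J$ is never cancelled by contributions of other $J$'s; without this, knowing $\deg f_{d_m^{\prime}}^{m}(t)\le 1$ would not by itself preclude a higher-degree term $c_J t^{\,r(E)-r(J)}$ from being present.
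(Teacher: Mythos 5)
Your proposal is correct and takes essentially the same approach as the paper: the paper's one-sentence argument ("Since the coefficient of $t^{r(E)-r(J)}$ is positive for any term of the right hand side of Equation \eqref{eq-f_d'm}, Corollary \ref{cor-deffd'm} and Proposition \ref{prop-f_d'm} imply the following corollary") is exactly the reasoning you spell out, with the no-cancellation observation made explicit.
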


\begin{lem}\label{lem-ejJ/ejJ'-in-Z}
Let $J\subseteq J^{\prime}\subseteq E$.
For any $0\leq j\leq r(J)$, the product of the elementary divisors $e_{1,J},\dots e_{j,J}$ is divided by the product of the elementary divisors $e_{1,J^{\prime}},\dots e_{j,J^{\prime}}$.
In other words,
\begin{align}\label{eq-ejJ/ejJ'-in-Z}
    \frac{\prod_{\ell=1}^{j}e_{\ell,J}}{\prod_{\ell=1}^{j}e_{\ell,J^{\prime}}}=\frac{e_{1,J}\cdots e_{j,J}}{e_{1,J^{\prime}}\cdots e_{j,J^{\prime}}}\in\mathbb{Z}_{>0}.
\end{align}
\end{lem}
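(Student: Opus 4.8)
The plan is to reduce the claim to a divisibility between the \emph{determinantal divisors} of the two matrices $G_J$ and $G_{J'}$, exploiting the obvious fact that, since $J\subseteq J'$, the matrix $G_J$ is literally a column submatrix of $G_{J'}$.

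First I would recall the standard consequence of the Smith normal form: for an integer matrix $M$ of rank $r$ and $0\le j\le r$, the greatest common divisor $D_j(M)$ of all $j\times j$ minors of $M$ satisfies $D_j(M)=e_1(M)e_2(M)\cdots e_j(M)$, where $e_1(M)\mid\cdots\mid e_r(M)$ are the elementary divisors of $M$ and $D_0(M):=1$. (The $j\times j$ minors only get multiplied by a unit $\pm1$ under the unimodular row and column operations bringing $M$ to Smith normal form, so $D_j$ is invariant; and for $\operatorname{diag}(e_1,\dots,e_r,0,\dots,0)$ with $e_1\mid\cdots\mid e_r$ the gcd of the $j\times j$ minors is visibly $e_1\cdots e_j$.) Applying this to $M=G_J$ and to $M=G_{J'}$, and using $j\le r(J)\le r(J')$ (the second inequality recorded in Section~\ref{sec-preliminaries}), both $D_j(G_J)$ and $D_j(G_{J'})$ are well-defined positive integers, equal respectively to the two products in \eqref{eq-ejJ/ejJ'-in-Z}.

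Next I would use the inclusion $J\subseteq J'$ directly: $G_J$ is obtained from $G_{J'}$ by deleting the columns indexed by $J'\setminus J$, so any $j\times j$ submatrix of $G_J$ (a choice of $j$ rows from $[k]$ and $j$ columns from $J$) is, verbatim, a $j\times j$ submatrix of $G_{J'}$, with the same determinant. Hence the nonempty finite set of $j\times j$ minors of $G_J$ is contained in that of $G_{J'}$. Since the gcd of a set of integers divides the gcd of any nonempty subset, this gives $D_j(G_{J'})\mid D_j(G_J)$, i.e.\ $e_{1,J'}\cdots e_{j,J'}$ divides $e_{1,J}\cdots e_{j,J}$; dividing yields that the ratio in \eqref{eq-ejJ/ejJ'-in-Z} lies in $\mathbb{Z}_{>0}$. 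The degenerate case $j=0$ is immediate, both products being the empty product $1$.

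I do not anticipate a genuine obstacle here — the argument is essentially bookkeeping. The only point deserving care is invoking (or, if a self-contained treatment is preferred, briefly re-deriving) the identity $D_j=e_1\cdots e_j$ and confirming that all quantities involved are nonzero, which the hypothesis $0\le j\le r(J)$ together with $r(J)\le r(J')$ guarantees.
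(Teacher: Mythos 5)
Your proof is correct and is essentially the same as the paper's: both invoke the determinantal-divisor identity $e_{1,J}\cdots e_{j,J}=\gcd$ of all $j\times j$ minors of $G_J$ and then observe that, since $J\subseteq J'$, every $j\times j$ minor of $G_J$ is also a $j\times j$ minor of $G_{J'}$, so the gcd for $G_{J'}$ divides that for $G_J$. The only cosmetic difference is that you spell out the $D_j$ notation and the justification of $D_j=e_1\cdots e_j$ in a bit more detail; the paper simply cites references for the same fact.
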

\begin{proof}
    If $j=0$, then $\prod_{\ell=1}^{j}e_{\ell,J}=\prod_{\ell=1}^{j}e_{\ell,J^{\prime}}=1$ and Equation \eqref{eq-ejJ/ejJ'-in-Z} holds.
    Let $j$ be an integer with $1\leq j\leq r(J)$.
    From the fundamental theorem for finitely-generated abelian groups, $e_{1,J}\cdots e_{j,J}$ is equal to the gcd of all $j\times j$ minors of $G_J$ (see~\cite[Proposition 8.1]{Miller-Reiner09} or~\cite[Theorem 2.4]{Stanley2016}).
    Since $J\subseteq J^{\prime}$, the gcd of all $j\times j$ minors of $G_{J^{\prime}}$ is a divisor of the gcd of all $j\times j$ minors of $G_J$.
    Therefore, we have Equation \eqref{eq-ejJ/ejJ'-in-Z}.
\end{proof}

\begin{lem}\label{lem-gcd-qejJ/qejJ'}
    Let $J\subseteq J^{\prime}\subseteq E$ and let $q\in\mathbb{Z}_{>0}$.
    Then we have
    \begin{align}\label{eq-gcd-qejJ/qejJ'}
    \frac{\prod_{\ell=1}^{r(J)}\gcd(q,e_{\ell,J})}{\prod_{\ell=1}^{r(J)}\gcd(q,e_{\ell,J^{\prime}})}=\frac{\gcd(q,e_{1,J})\cdots \gcd(q,e_{r(J),J})}{\gcd(q,e_{1,J^{\prime}})\cdots \gcd(q,e_{r(J),J^{\prime}})}\in\mathbb{Z}_{>0}.
\end{align}
\end{lem}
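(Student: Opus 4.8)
The plan is to establish the divisibility
\[
\prod_{\ell=1}^{r(J)}\gcd(q,e_{\ell,J'}) \mid \prod_{\ell=1}^{r(J)}\gcd(q,e_{\ell,J}),
\]
after which the quotient is automatically positive, being a ratio of products of positive integers. (Note that $J\subseteq J'$ forces $r(J)\le r(J')$, so that $e_{1,J'},\dots,e_{r(J),J'}$ are all defined and the statement makes sense.) Divisibility can be checked one prime at a time: fix a prime $p$, let $v_p$ denote the $p$-adic valuation, and set $c\coloneq v_p(q)$, $\alpha_\ell\coloneq v_p(e_{\ell,J})$, and $\beta_\ell\coloneq v_p(e_{\ell,J'})$ for $1\le\ell\le r(J)$. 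Because $e_{1,J}\mid e_{2,J}\mid\cdots$ and $e_{1,J'}\mid e_{2,J'}\mid\cdots$, the finite sequences $(\alpha_\ell)$ and $(\beta_\ell)$ are nondecreasing, and applying Lemma~\ref{lem-ejJ/ejJ'-in-Z} for $j=1,\dots,r(J)$ and taking $v_p$ yields the partial-sum inequalities $\sum_{\ell=1}^{j}\beta_\ell\le\sum_{\ell=1}^{j}\alpha_\ell$ for every $0\le j\le r(J)$. Since $v_p(\gcd(q,e))=\min(v_p(q),v_p(e))$, the required local inequality $\sum_\ell v_p(\gcd(q,e_{\ell,J'}))\le\sum_\ell v_p(\gcd(q,e_{\ell,J}))$ reduces to the combinatorial claim
\[
\sum_{\ell=1}^{r(J)}\min(c,\beta_\ell)\ \le\ \sum_{\ell=1}^{r(J)}\min(c,\alpha_\ell).
\]

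To prove this claim, write $m\coloneq r(J)$ and let $a$ (resp.\ $b$) be the number of indices $\ell\in[m]$ with $\alpha_\ell<c$ (resp.\ $\beta_\ell<c$); by monotonicity these index sets are $\{1,\dots,a\}$ and $\{1,\dots,b\}$. Splitting each sum into its terms below $c$ and its terms equal to $c$ gives $\sum_{\ell=1}^{m}\min(c,\alpha_\ell)=A_a+(m-a)c$ and $\sum_{\ell=1}^{m}\min(c,\beta_\ell)=B_b+(m-b)c$, where $A_j\coloneq\sum_{\ell=1}^{j}\alpha_\ell$ and $B_j\coloneq\sum_{\ell=1}^{j}\beta_\ell$; hence the claim is equivalent to $A_a-B_b\ge(a-b)c$. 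If $a\le b$, then $A_a-B_b=(A_a-B_a)-\sum_{\ell=a+1}^{b}\beta_\ell\ge 0-(b-a)c=(a-b)c$, using $A_a\ge B_a$ and $\beta_\ell<c$ for $a<\ell\le b$. If $a>b$, then $A_a-B_b=(A_a-B_a)+\sum_{\ell=b+1}^{a}\beta_\ell\ge 0+(a-b)c$, using $A_a\ge B_a$ and $\beta_\ell\ge c$ for $\ell>b$. Either way the inequality holds, which finishes the argument.

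The only nontrivial point is this last elementary inequality, and it is where I expect the main obstacle to lie. The hypothesis supplied by Lemma~\ref{lem-ejJ/ejJ'-in-Z} is merely the partial-sum (weak-majorization-type) domination of $(\alpha_\ell)$ over $(\beta_\ell)$, which does \emph{not} imply the termwise divisibility $\gcd(q,e_{\ell,J'})\mid\gcd(q,e_{\ell,J})$; so one cannot conclude factor by factor, and the case split above is genuinely needed. Equivalently, the key point can be phrased as: replacing both nondecreasing sequences by their truncations $\ell\mapsto\min(c,\alpha_\ell)$ and $\ell\mapsto\min(c,\beta_\ell)$ preserves every partial-sum inequality, and one then reads off the case $j=r(J)$. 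Everything else—passing to a single prime, the valuation identity for $\gcd$, and the rank inequality $r(J)\le r(J')$—is routine bookkeeping.
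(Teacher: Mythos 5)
Your proof is correct and takes essentially the same route as the paper: both reduce prime by prime, rewrite $v_p(\gcd(q,e))$ as $\min(v_p(q),v_p(e))$, and then establish $\sum_\ell\min(c,\beta_\ell)\le\sum_\ell\min(c,\alpha_\ell)$ by a two-case split on the relative order of the two ``breakpoint'' indices (your $a,b$ correspond, up to the handling of equalities $\alpha_\ell=c$, to the paper's $i_\alpha,i_\beta$), invoking the partial-sum inequality from Lemma~\ref{lem-ejJ/ejJ'-in-Z} in each case. The bookkeeping differs cosmetically but the argument is the same.
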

\begin{proof}
    The case where $J=\emptyset$ is clear, and so we assume that $J\neq \emptyset$.
    Let $p$ be a prime number.
    For any $i$ with $1\leq i\leq r(J)$, let $\alpha_i$ and $\beta_i$ denote the exponent of the prime $p$ in the prime factorization of $e_{i,J}$ $e_{i,J^{\prime}}$, respectively.
    Let $\gamma$ denote the exponent of the prime $p$ in the prime factorization of $q$.
    Since $e_{1,J}|e_{2,J}| \cdots |e_{r(J),J}$ and $e_{1,J^{\prime}}|e_{2,J^{\prime}}| \cdots |e_{r(J),J^{\prime}}$, we have $0\leq \alpha_1\leq \alpha_2\leq\cdots\leq\alpha_{r(J)}$ and $0\leq \beta_1\leq \beta_2\leq\cdots\leq\beta_{r(J)}$.
    To prove Equation \eqref{eq-gcd-qejJ/qejJ'}, it suffices to prove that
    \begin{align}\label{eq-min-alpha-beta-gamma}
        \min \{ \beta_{1}, \gamma \} + \cdots + \min \{ \beta_{r(J)}, \gamma \} \leq \min \{ \alpha_{1}, \gamma \} + \cdots + \min \{ \alpha_{r(J)}, \gamma \}.
    \end{align}
    Define $i_{\alpha} \coloneq \min \{ i \in [r(J)+1]\mid \gamma < \alpha_{i}  \}$ and $i_{\beta} \coloneq \min \{ i \in [r(J)+1]\mid \gamma < \beta_{i} \}$ where we assume $\alpha_{r(J)+1}=\beta_{r(J)+1}=\gamma+1$.

    Let us consider the case where $i_{\beta}\leq i_{\alpha}$.
    By definition, $\gamma < \beta_{i}$ for any $i \geq i_{\beta}$.
    In addition, by Lemma \ref{lem-ejJ/ejJ'-in-Z}, $\beta_1 + \cdots + \beta_j \leq \alpha_1 + \cdots +\alpha_j$ for any $0 \leq j\leq r(J)$.
    Therefore, we have
    \begin{align*}
        \min \{ \beta_{1}, \gamma \} + \cdots + \min \{ \beta_{r(J)}, \gamma \} &= \beta_1 + \cdots + \beta_{i_{\beta}-1} + (r(J)-i_{\beta}+1)\gamma\\
        &\leq \beta_1 + \cdots + \beta_{i_{\beta}-1} + \beta_{i_{\beta}} + \cdots + \beta_{i_{\alpha}-1} + (r(J)-i_{\alpha}+1)\gamma\\
        &\leq \alpha_1 + \cdots + \alpha_{i_{\alpha}-1} + (r(J)-i_{\alpha}+1)\gamma\\
        &= \min \{ \alpha_{1}, \gamma \} + \cdots + \min \{ \alpha_{r(J)}, \gamma \}.
    \end{align*}

    Next, consider the case where $i_{\alpha} < i_{\beta}$.
    By definition, $\beta_{i} \leq \gamma$ for any $i < i_{\beta}$.
    Therefore, by Lemma \ref{lem-ejJ/ejJ'-in-Z},
    \begin{align*}
        \min \{ \beta_{1}, \gamma \} + \cdots + \min \{ \beta_{r(J)}, \gamma \} &= \beta_1 + \cdots + \beta_{i_{\alpha}-1} + \beta_{i_{\alpha}} + \cdots + \beta_{i_{\beta}-1} + (r(J)-i_{\beta}+1)\gamma\\
        &\leq \beta_1 + \cdots + \beta_{i_{\alpha}-1} + (r(J)-i_{\alpha}+1)\gamma\\
        &\leq \alpha_1 + \cdots + \alpha_{i_{\alpha}-1} + (r(J)-i_{\alpha}+1)\gamma\\
        &= \min \{ \alpha_{1}, \gamma \} + \cdots + \min \{ \alpha_{r(J)}, \gamma \}.
    \end{align*}
    From the above discussion, we obtain Equation \eqref{eq-min-alpha-beta-gamma}.
\end{proof}

\begin{thm}\label{thm-mindist-ifandonlyif}
    Let $m\in [\rho_0]$ be a divisor of $\rho_0$.
    The minimum distance $d_q$ is the constant $d^\prime_m$ for any $q\in \mathbb{Z}_{>0}$ with $q>m$ and $\gcd(q,\rho_0)=m$.
    Furthermore, $d_m=d^\prime_m$ holds if and only if there exists a subset $J$ of $E$ with $\abs{J}=n-d^{\prime}_m$ such that either \textup{(i)} or \textup{(ii)} below holds:
        \begin{itemize}
            \item[\textup{(i)}] $\displaystyle \frac{\prod_{\ell=1}^{r(J)}\gcd(m,e_{\ell,J})}{\prod_{\ell=1}^{r(J)}\gcd(m,e_{\ell,E})}\neq 1$;
            \item[\textup{(ii)}] $r(E)=r(J)+1$ and $m\nmid e_{r(J)+1,E}$.
        \end{itemize}
\end{thm}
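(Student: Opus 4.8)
The plan rests on Proposition~\ref{prop-quasipoly-AGi}. Since $m\mid\rho_0$ we have $\gcd(m,\rho_0)=m$, so the gcd property yields $A_{G,i}(q)=f_i^m(q)$ for \emph{every} $q\in\mathbb{Z}_{>0}$ with $\gcd(q,\rho_0)=m$, including $q=m$ itself. Consequently, if $c\in C_G(q)$ is a nonzero codeword of weight $i'$ (necessarily $i'\ge1$) for such a $q$, then $A_{G,i'}(q)\ge1$, so $f_{i'}^m(q)\ge1$, so $f_{i'}^m\neq0$ as a polynomial, so $i'\ge d_m'$. Thus $d_q\ge d_m'$ whenever $\gcd(q,\rho_0)=m$, and $C_G(q)$ has no codeword of weight in the open interval $(0,d_m')$. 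Hence the first assertion reduces to showing $A_{G,d_m'}(q)>0$ whenever $q>m$ and $\gcd(q,\rho_0)=m$, while the equivalence $d_m=d_m'$ reduces to the single condition $A_{G,d_m'}(m)>0$. (Throughout one uses $d_m'\le n$ — e.g.\ $f_n^m$ has degree $r(E)\ge1$ by Corollary~\ref{cor-weight=n} and Theorem~\ref{thm-charquasi-KTT} — so the index sets $\{J\subseteq E\mid|J|=n-d_m'\}$ below are nonempty.)

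For the first reduction I would use the closed formula of Proposition~\ref{prop-f_d'm}. Write $c_J\coloneq\prod_{\ell=1}^{r(J)}\gcd(m,e_{\ell,J})\big/\prod_{\ell=1}^{r(E)}\gcd(m,e_{\ell,E})>0$ and $d_J\coloneq r(E)-r(J)$; by Corollary~\ref{cor-rE-rJ<=1}, $d_J\in\{0,1\}$ for every $J$ with $|J|=n-d_m'$, and the proposition reads $f_{d_m'}^m(t)=\sum_{|J|=n-d_m'}(c_Jt^{d_J}-1)$. The key point is that $c_Jm^{d_J}=|H_J(m)|/|H_E(m)|$ (Lemma~\ref{lem-num-ele-H}) is a \emph{positive integer}, since $H_E(m)\subseteq H_J(m)$; hence $c_Jm^{d_J}-1\ge0$, and for $q>m$ one has $c_Jq^{d_J}-1\ge c_Jm^{d_J}-1\ge0$ with strict inequality exactly when $d_J=1$. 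Therefore, if some $J$ has $d_J=1$ that summand is positive at $q$ and the rest are nonnegative; if all such $J$ have $d_J=0$ then $f_{d_m'}^m$ is a constant, nonzero by minimality of $d_m'$ and equal to its nonnegative value at $m$, hence positive. Either way $A_{G,d_m'}(q)=f_{d_m'}^m(q)>0$ for $q>m$ with $\gcd(q,\rho_0)=m$, so $d_q=d_m'$ there.

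For the equivalence, Proposition~\ref{prop-f_d'm} gives $A_{G,d_m'}(m)=\sum_{|J|=n-d_m'}\bigl(|H_J(m)|/|H_E(m)|-1\bigr)$, a sum of nonnegative integers, so $A_{G,d_m'}(m)>0$ iff some $J$ with $|J|=n-d_m'$ satisfies $|H_J(m)|/|H_E(m)|>1$. It remains to rewrite this inequality, again splitting on $d_J=r(E)-r(J)\in\{0,1\}$. If $r(J)=r(E)$, then $|H_J(m)|/|H_E(m)|=\prod_{\ell=1}^{r(J)}\gcd(m,e_{\ell,J})\big/\prod_{\ell=1}^{r(J)}\gcd(m,e_{\ell,E})$, a positive integer by Lemma~\ref{lem-gcd-qejJ/qejJ'}, which exceeds $1$ exactly when it is $\neq1$, i.e.\ exactly when (i) holds (and (ii) is vacuous). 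If $r(J)=r(E)-1$, the ratio factors as $A\cdot B$ with $A\coloneq\prod_{\ell=1}^{r(J)}\gcd(m,e_{\ell,J})\big/\prod_{\ell=1}^{r(J)}\gcd(m,e_{\ell,E})$ and $B\coloneq m/\gcd(m,e_{r(J)+1,E})$, both positive integers (the first by Lemma~\ref{lem-gcd-qejJ/qejJ'}), so $AB>1$ iff $A\neq1$ or $B\neq1$, i.e.\ iff (i) holds or $m\nmid e_{r(J)+1,E}$; together with $r(E)=r(J)+1$ the latter is (ii). Combining the two cases yields the stated equivalence.

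I expect the main obstacle to be the first reduction: guaranteeing $A_{G,d_m'}(q)>0$ for \emph{every} $q>m$ with $\gcd(q,\rho_0)=m$, rather than merely for all sufficiently large $q$. This is exactly what is bought by observing that each summand $c_Jt^{d_J}-1$ of $f_{d_m'}^m$, regarded as a function of $q\ge m$, is nondecreasing and already $\ge0$ at $q=m$, which ultimately rests on the integrality of $|H_J(m)|/|H_E(m)|$. A secondary point to watch is the distinction between the polynomial identity of Proposition~\ref{prop-f_d'm} (valid in particular at $t=m$) and its combinatorial reading $|H_J(q)|/|H_E(q)|-1=|\{c\in C_G(q)\mid\operatorname{supp}(c)=E\setminus J\}|$, which was established only for $q>m$; the argument above uses the former at $q=m$.
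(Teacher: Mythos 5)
Your proposal is correct, and it follows essentially the same route as the paper: reduce $d_q=d'_m$ to positivity of $A_{G,d'_m}(q)=f_{d'_m}^m(q)$, plug in the closed formula of Proposition~\ref{prop-f_d'm}, use Corollary~\ref{cor-rE-rJ<=1} to restrict $d_J\coloneqq r(E)-r(J)$ to $\{0,1\}$, and then analyze each summand separately, splitting on whether $d_J$ is $0$ or $1$; the equivalence is obtained by evaluating at $q=m$. The one genuine variation is in how you justify that each summand is nonnegative. The paper invokes Lemma~\ref{lem-gcd-qejJ/qejJ'} directly to get the factor $\prod_{\ell\le r(J)}\gcd(m,e_{\ell,J})/\prod_{\ell\le r(J)}\gcd(m,e_{\ell,E})\in\mathbb{Z}_{\ge1}$ together with $q/\gcd(m,e_{r(J)+1,E})\ge 1$, so $g_J(q)-1\ge0$. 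You instead observe that $c_Jm^{d_J}=\lvert H_J(m)\rvert/\lvert H_E(m)\rvert$ is a subgroup index, hence a positive integer, and then use monotonicity in $q$ for $d_J=1$; this is a slightly cleaner way to see $\ge0$ at $q=m$, though you still need Lemma~\ref{lem-gcd-qejJ/qejJ'} in the equivalence part to know that the factor $A$ alone (not just the product $AB$) is a positive integer, so the lemma is not dispensable. Your explicit remarks that $d'_m\le n$ (so the index set of $J$'s is nonempty) and that Proposition~\ref{prop-f_d'm} is used as a polynomial identity at $t=m$ — even though its combinatorial derivation only treated $q>m$ — make precise two points that the paper leaves implicit; both are correct and worth stating.
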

\begin{proof}
    By the definition of $d^{\prime}_{m}$, for any $q\in\mathbb{Z}_{>0}$ with $\gcd(q,\rho_0)=m$, if $A_{G,d^{\prime}_m}(q)=f_{d^{\prime}_m}^m(q)\neq 0$, then we have $d_q=d^{\prime}_m$.
    In other words, if there exist no solutions $q$ of the equation $f_{d^{\prime}_m}^m(t)=0$ such that $q\in\mathbb{Z}_{>0}$ and $\gcd(q,\rho_0)=m$, then the minimum weight $d_q$ coincides with $d^{\prime}_m$.
    
    Let $J$ be a subset of $E$ with $\abs{J}=n - d^{\prime}_m$.
    Define
    \begin{align*}
        g_J(t) \coloneq \frac{\prod_{\ell = 1}^{r(J)} \gcd(m,e_{\ell, J})}{\prod_{\ell = 1}^{r(J)} \gcd(m,e_{\ell, E})} \cdot \frac{t^{r(E) - r(J)}}{\prod_{\ell = r(J)+ 1}^{r(E)} \gcd(m,e_{\ell, E})}.
    \end{align*}
    Then $\displaystyle f_{d_m^{\prime}}^{m}(t)=\sum_{\substack{J \subseteq E \\ \abs{J} = n - d_m^{\prime}}} \left(g_J(t) -1 \right)$ by Proposition \ref{prop-f_d'm}.
    
    Since $0\leq r(E) - r(J)\leq 1$ by Corollary \ref{cor-rE-rJ<=1}, we have $0\leq \deg g_J(t)\leq 1$.
    Let $q\in \mathbb{Z}_{>0}$ be an integer with $\gcd(q,\rho_0)=m$.
    Then
    \begin{align*}
        g_J(q)-1=
        \begin{cases}
            \displaystyle \frac{\prod_{\ell = 1}^{r(J)} \gcd(m,e_{\ell, J})}{\prod_{\ell = 1}^{r(J)} \gcd(m,e_{\ell, E})}-1 &\quad \text{if}\ \ r(E) = r(J),\\[1.5em]
            \displaystyle \frac{\prod_{\ell = 1}^{r(J)} \gcd(m,e_{\ell, J})}{\prod_{\ell = 1}^{r(J)} \gcd(m,e_{\ell, E})}\cdot\frac{q}{\gcd(m,e_{r(J+1), E})}-1&\quad \text{if}\ \ r(E) = r(J)+1.
        \end{cases}
    \end{align*}
    By Lemma \ref{lem-gcd-qejJ/qejJ'}, we have $g_J(q)-1\geq 0$.
    If $r(E) = r(J)$ for any $J\subseteq E$ with $\abs{J}=n - d^{\prime}_m$, then $\displaystyle f_{d_m^{\prime}}^{m}(q)=f_{d_m^{\prime}}^{m}(t)$ is a nonzero constant by the definition of $d_m^{\prime}$.
    Next, we assume that there exists $J\subseteq E$ with $\abs{J}=n - d^{\prime}_m$ such that $r(E) = r(J) + 1$.
    If $q>m$, then $q>\gcd(m,e_{r(J+1), E})$.
    Thus, we have $\displaystyle g_J(q)-1 > 0$ so that $\displaystyle f_{d_m^{\prime}}^{m}(q) > 0$.
    Therefore, we have $d_q=d^\prime_m$ for any $q\in \mathbb{Z}_{>0}$ with $q>m$ and $\gcd(q,\rho_0)=m$.

    We now prove the remaining part of the claim.
    Assume that for any $J\subseteq E$ with $\abs{J}=n-d^{\prime}_m$, both \textup{(i)} and \textup{(ii)} do not hold.
    If $r(E)=r(J)$, then $\displaystyle g_J(m)-1=0$ by negation of \textup{(i)}.
    If $r(E)=r(J)+1$, then $\displaystyle g_J(m)-1=0$ since $m|e_{r(J)+1,E}$ means $m=\gcd(m,e_{r(J)+1,E})$.
    This implies $f_{d^{\prime}_m}^m(m)= 0$.
    Therefore, we have $d_m>d^{\prime}_m$.

    Conversely, assume that there exists a subset $J$ of $E$ with $\abs{J}=n-d^{\prime}_m$ such that either \textup{(i)} or \textup{(ii)} holds.
    Note that $m\nmid e_{r(J)+1,E}$ means $m>\gcd(m,e_{r(J)+1,E})$.
    Then, we have $g_J(m)-1>0$ in both cases where $r(E)=r(J)$ and $r(E)=r(J)+1$.
    This implies $f_{d^{\prime}_m}^m(m)\neq 0$.
    Therefore, we have $d_m=d^{\prime}_m$.
\end{proof}
Note that the elementary divisors denoted by $e_1, \dots, e_r$ in the notation of Theorem \ref{thm-sufficient-necessary-mindist} are written as $e_{1,E}, \dots, e_{r(E),E}$ in the notation of this section.
\begin{proof}[Proof of Theorem \ref{thm-sufficient-necessary-mindist}]
    \textup{(1)} Applying the first half of Theorem \ref{thm-mindist-ifandonlyif} with $m=1$, $d_q$ is a constant for any $q\in\mathbb{Z}_{>0}$ with $q>1$ and $\gcd(q,\rho_0)=1$.
    The integer $m_0$ is the  smallest integer $q$ satisfying $q > 1$ and $\gcd(q,\rho_0)=1$.
    Therefore, for any $q\in\mathbb{Z}_{>0}$ with $q>1$ and $\gcd(q,\rho_0)=1$, we have $d_q = d_m^{\prime} = d_{m_0}$.
    
    \textup{(2)} Let $m\geq 2$ and $\gcd(m,e_{r(E),E})=1$.
    Assume that for any subset $J$ of $E$ with $\abs{J}=n-d^{\prime}_m$, both \textup{(i)} and \textup{(ii)} of Theorem \ref{thm-mindist-ifandonlyif} do not hold.
    In the case where there exists $J$ such that $r(E)=r(J)+1$, we have $1=\gcd(m,e_{r(E),E})=\gcd(m,e_{r(J)+1,E})=m\geq 2$ by negation of \textup{(ii)}.
    This is a contradiction.
    Therefore, we have $r(E)=r(J)$ for any $J\subseteq E$ with $\abs{J}=n-d^{\prime}_m$.
    By negation of \textup{(i)},
    \begin{align*}
        f_{d^{\prime}_m}^m(t)=\sum_{\substack{J \subseteq E \\ \abs{J} = n - d_m^{\prime}}}\frac{\prod_{\ell = 1}^{r(J)} \gcd(m,e_{\ell, J})}{\prod_{\ell = 1}^{r(J)} \gcd(m,e_{\ell, E})}-1=0.
    \end{align*}
    This also contradicts to the definition of $d^{\prime}_m$.
    Therefore, there exists $J\subseteq E$ with $\abs{J}=n-d^{\prime}_m$, either \textup{(i)} or \textup{(ii)} of Theorem \ref{thm-mindist-ifandonlyif} hold.
    By Theorem \ref{thm-mindist-ifandonlyif}, we have $d_q=d_m$ for any $q\in\mathbb{Z}_{>0}$ with $\gcd(q,\rho_0)=m$.
    
    \textup{(3)} Assume that $m|e_{1,E}$.
    Since $e_{1,E}|\cdots|e_{r(E),E}$, we have $m|e_{\ell,E}$ for any $\ell\in[r(E)]$.
    Let $J$ be a subset of $E$ with $\abs{J}=n-d^{\prime}_m$.
    If $r(E)=r(J)+1$, then $m|e_{r(J)+1,E}$.
    This is negation of \textup{(ii)} of Theorem \ref{thm-mindist-ifandonlyif}.
    Moreover, since $e_{1,E}|e_{1,J}$ by Lemma \ref{lem-ejJ/ejJ'-in-Z}, we have $m|e_{1,E}|e_{1,J}|e_{2,J}|\cdots|e_{r(J),J}$.
    Thus, $\gcd(m,e_{\ell,J})=m$ for any $\ell\in[r(J)]$.
    This implies $\displaystyle\frac{\prod_{\ell=1}^{r(J)}\gcd(m,e_{\ell,r(J)})}{\prod_{\ell=1}^{r(J)}\gcd(m,e_{\ell,r(E)})}=\frac{m^{r(J)}}{m^{r(J)}}=1$, which is negation of \textup{(i)} of Theorem \ref{thm-mindist-ifandonlyif}.
    Therefore, if $r(E)=r(J)+1$, then both \textup{(i)} and \textup{(ii)} do not hold.

    If $r(E)=r(J)$, then \textup{(ii)} do not hold, since $r(E)\neq r(J)+1$.
    For the same reason as the above discussion, \textup{(i)} does not hold.
    Therefore, for any subset $J$ of $E$ with $\abs{J}=n-d^{\prime}_m$, both \textup{(i)} and \textup{(ii)} of Theorem \ref{thm-mindist-ifandonlyif} do not hold.
    By Theorem \ref{thm-mindist-ifandonlyif}, we have $d_m \neq d^{\prime}_{m}$.
\end{proof}

\section{The Greene type identity}\label{sec-Greene}
In this section, we provide an analog of Greene's theorem (Theorem \ref{thm-Greene}), which describes the relation between the weight enumerators of linear codes and the Tutte polynomials of the matroids associated with those codes.
The Tutte polynomial is one of the most important invariants of matroids and generalizes the chromatic polynomial and the flow polynomial of graphs, among other invariants.
First, we review the classical results.

\begin{defi}
    Let $k$ and $n$ be positive integers with $k\leq n$, let $\mathbb{F}$ be a field, and let $G\in\operatorname{Mat}_{k\times n}(\mathbb{F})$.
    The \emph{vector matroid} of $G$ is the ordered pair $M[G]\coloneq (E,r)$, where $E=[n]$ is the set of column indices of $G$, and $r$ is an integer-valued function on the power set $2^{E}$ defined as $r(K)=\rank_{\mathbb{F}} G_K$ for $K\subseteq E$, where $G_K$ is the submatrix of $G$ consisting of columns indexed by $K$.
    The \emph{Tutte polynomial} of $M[G]$ is the bivariate polynomial
    \begin{equation*}
        \mathcal{T}_{M[G]}(u,v)=\sum_{K\subseteq E}(u-1)^{r(E)-r(K)}(v-1)^{\left\lvert K\right\rvert-r(K)}\in \mathbb{Z}[u,v].
    \end{equation*}
\end{defi}
See \cite{oxley2006matroid} for the general theory of matroids and \cite{BO1992,JP2013} for Tutte polynomials.

\begin{thm}[{\cite{Greene1976}, cf.~\cite[Theorem 5.9.]{JP2013}}]\label{thm-Greene}
Let $\mathbb{F}_q$ be the finite field with $q$ elements and
let $C$ be a linear code over $\mathbb{F}_q$ with generator matrix $G\in\operatorname{Mat}_{k\times n}(\mathbb{F}_q)$; that is, $C$ is the linear subspace spanned by the row vectors of $G$ in $\mathbb{F}_q^n$. 
Let $W_{C}(x,y)\coloneq \sum_{c\in C}x^{n-\operatorname{wt}(c)}y^{\operatorname{wt}(c)}$ be the weight enumerator of $C$, where $\operatorname{wt}(c)=\abs{\{i\in [n]\mid c_i\neq 0\}}$.
Then the following holds
\begin{equation*}
    W_{C}(x,y)=\frac{y^{n-r(E)}(x-y)^{r(E)}}{\abs{\operatorname{Ker}\varphi_{G}}}\mathcal{T}_{M[G]}\left(\frac{x+(q-1)y}{x-y},\frac{x}{y}\right),
\end{equation*}
where $\varphi_{G}$ is the linear map $\varphi_{G} \colon \mathbb{F}_q^k\to \mathbb{F}_q^n$ defined as $\varphi_{G}(u)=uG$.
\end{thm}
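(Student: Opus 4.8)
The plan is to run the classical argument for Greene's identity: I would rewrite both sides as a single sum over the subsets $J\subseteq E$ of coordinates that one forces to vanish, and then compare the two expansions term by term. First I would prove the counting lemma that, for $J\subseteq E$, the number $N_J$ of codewords $c\in C$ with $c_i=0$ for all $i\in J$ equals $q^{\,r(E)-r(J)}$. The quickest route is to pull back along the surjection $\varphi_G$: the preimage of $\{c\in C\mid c_i=0\ \forall i\in J\}$ is $\{u\in\mathbb{F}_q^k\mid uG_J=0\}$, the left kernel of $G_J$, of cardinality $q^{\,k-r(J)}$; since each nonempty fiber of $\varphi_G$ is a coset of $\operatorname{Ker}\varphi_G$ and $\abs{\operatorname{Ker}\varphi_G}=q^{\,k-r(E)}$, this yields $N_J=q^{\,r(E)-r(J)}$. (Equivalently, restriction to the coordinates in $J$ is an $\mathbb{F}_q$-linear map $C\to\mathbb{F}_q^{J}$ whose image is the row space of $G_J$, so its kernel---the set counted by $N_J$---has dimension $r(E)-r(J)$.)

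Next I would expand the weight enumerator. For each $c\in C$, write $x^{\,n-\wt(c)}=\bigl((x-y)+y\bigr)^{n-\wt(c)}$ and apply the binomial theorem, indexing the expansion by a subset $J$ of the set of zero coordinates of $c$; this gives $x^{\,n-\wt(c)}y^{\,\wt(c)}=\sum_{J:\,c_i=0\ \forall i\in J}(x-y)^{\abs{J}}y^{\,n-\abs{J}}$. Summing over $c\in C$ and interchanging the two sums, the monomial $(x-y)^{\abs{J}}y^{\,n-\abs{J}}$ is collected exactly $N_J$ times, so $W_C(x,y)=\sum_{J\subseteq E}q^{\,r(E)-r(J)}(x-y)^{\abs{J}}y^{\,n-\abs{J}}$.

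On the matroid side, I would substitute $u=\frac{x+(q-1)y}{x-y}$ and $v=\frac{x}{y}$ into the subset expansion of the Tutte polynomial. Since $u-1=\frac{qy}{x-y}$ and $v-1=\frac{x-y}{y}$, collecting the powers of $q$, of $y$ and of $x-y$ gives $(u-1)^{r(E)-r(K)}(v-1)^{\abs{K}-r(K)}=q^{\,r(E)-r(K)}\,y^{\,r(E)-\abs{K}}\,(x-y)^{\abs{K}-r(E)}$, so that $\mathcal{T}_{M[G]}(u,v)=\sum_{K\subseteq E}q^{\,r(E)-r(K)}y^{\,r(E)-\abs{K}}(x-y)^{\abs{K}-r(E)}$. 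Multiplying by the prefactor $y^{\,n-r(E)}(x-y)^{r(E)}$ collapses the exponents to $q^{\,r(E)-r(K)}y^{\,n-\abs{K}}(x-y)^{\abs{K}}$, and dividing by $\abs{\operatorname{Ker}\varphi_G}=q^{\,k-r(E)}$---which is $1$ when $G$ has full row rank---the right-hand side of the asserted identity equals $\sum_{K\subseteq E}q^{\,r(E)-r(K)}y^{\,n-\abs{K}}(x-y)^{\abs{K}}$. This is the same sum as the one found for $W_C(x,y)$, so the identity follows after renaming $K$ as $J$.

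I do not expect a serious obstacle: once the counting lemma is available, the rest is bookkeeping with the two subset expansions. The genuinely substantive point is that lemma---that the number of codewords vanishing on a prescribed coordinate set $J$ is exactly $q^{\,r(E)-r(J)}$---which is where working over a field is used (through rank--nullity). This is precisely the step I would expect to need reworking for the integral analogue: there $\abs{\{u\in\mathbb{Z}_q^k\mid uG_J=0\}}=\bigl(\prod_{\ell}\gcd(q,e_{\ell,J})\bigr)q^{\,k-r(J)}$ by Lemma~\ref{lem-num-ele-H} rather than a clean power of $q$, the kernel $\operatorname{Ker}\varphi_{G,q}=H_E(q)$ is genuinely nontrivial (which is why the factor $\abs{\operatorname{Ker}\varphi_G}$ is displayed), and the ordinary Tutte polynomial has to be replaced by the arithmetic Tutte quasi-polynomial of Br\"and\'en and Moci \cite{BM2014}.
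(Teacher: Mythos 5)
The paper does not prove Theorem~\ref{thm-Greene}: it is cited from \cite{Greene1976} and \cite{JP2013} as background, and the transformation formula the paper actually proves is its quasi-polynomial analogue, Theorem~\ref{thm-GreeneQuasi}. Your argument is the standard derivation of Greene's identity and is correct, but the point you tucked into a parenthesis carries real weight: in the final step you divide by $\abs{\operatorname{Ker}\varphi_G}=q^{k-r(E)}$ and then write the result as $\sum_{K}q^{r(E)-r(K)}y^{n-\abs{K}}(x-y)^{\abs{K}}$ unchanged, which is legitimate only because $q^{k-r(E)}=1$ under the (implicit, but standard) coding-theoretic convention that a generator matrix has full row rank. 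For rank-deficient $G$ the right-hand side of the stated identity acquires a spurious factor $q^{r(E)-k}$ and the equality fails---take $G$ to be the $2\times 1$ matrix with entries $1,0$, so $W_C=x+(q-1)y$ while the right side evaluates to $(x+(q-1)y)/q$---so the full-rank hypothesis should be stated, not merely remarked. With that in place your route runs parallel to the paper's proof of Theorem~\ref{thm-GreeneQuasi}: your counting lemma $N_J=q^{r(E)-r(J)}$ is the field specialization of the ratio $\abs{H_J(q)}/\abs{H_E(q)}$ computed from Lemma~\ref{lem-num-ele-H}, your one-step binomial rewriting $W_C(x,y)=\sum_J N_J(x-y)^{\abs{J}}y^{n-\abs{J}}$ is exactly the collapsed form of the paper's two-index inclusion--exclusion sum~\eqref{eq-AGi-sum}, and your closing remarks correctly identify which ingredients change in the integer-matrix setting (the $\gcd$ factors in the count, the nontrivial kernel $H_E(q)$, and the passage from $\mathcal{T}_{M[G]}$ to $\mathcal{Q}_G$).
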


The Tutte quasi-polynomial was introduced by Br\"and\'en and Moci~\cite{BM2014} for a list of elements in a finitely generated abelian group, and generalized by Fink and Moci~\cite{FM2016} to a matroid over $\mathbb{Z}$.
However, here we restrict the definition to a list of elements in $\mathbb{Z}^k$, which suffices to discuss the connection with the weight enumerator of a code.
\begin{defi}[{\cite[Section 7]{BM2014}, see also~\cite[Section 7.1]{FM2016}}]
    Let $G=(g^1,\dots, g^n)\in\operatorname{Mat}_{k\times n}(\mathbb{Z})$ and let $\Gamma_K$ be the torsion subgroup $\mathrm{tor}\left(\mathbb{Z}^k/\langle g^j\mid  j\in K\rangle\right)$ of the quotient $\mathbb{Z}^k/\langle g^j\mid  j\in K\rangle$.
    The \emph{Tutte quasi-polynomial} of $G$ is defined as follows:
    \begin{equation*}
        \mathcal{Q}_G(u,v)\coloneq \sum_{K\subseteq E}\frac{\abs{\Gamma_K}}{\abs{(u-1)(v-1)\Gamma_K}}(u-1)^{r(E)-r(K)}(v-1)^{\left\lvert K\right\rvert-r(K)}
    \end{equation*}
    where $E=[n]$.
\end{defi}
Tutte quasi-polynomials are also discussed by Delucchi and Moci~\cite{DM2018} from the perspective of the chromatic and flow quasi-polynomials of CW-complexes.
We now give a quasi-polynomial version of Greene's theorem.

\begin{thm}\label{thm-GreeneQuasi}
Let $G\in \operatorname{Mat}_{k\times n}(\mathbb{Z})$ and let $\varphi_{G,q} \colon \mathbb{Z}_q^k\to \mathbb{Z}_q^n$ be defined as $\varphi_{G,q}(u)=uG$.
Then, we have
    \begin{equation}\label{eq-GreeneQuasi}
        W_G(x,y;q)=\frac{y^{n-{r(E)}}(x-y)^{r(E)}}{\abs{ \operatorname{Ker}\varphi_{G,q}}}\mathcal{Q}_G\left(\frac{x+(q-1)y}{x-y},\frac{x}{y}\right)
    \end{equation}
and
    \begin{equation}\label{eq-GreeneQuasi-2}
        \mathcal{Q}_G\left(u,v\right)=\frac{\abs{ \operatorname{Ker}\varphi_{G,(u-1)(v-1)}}}{(v-1)^{r(E)}}W_G(v,1;(u-1)(v-1)).
    \end{equation}
\end{thm}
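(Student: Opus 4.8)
The plan is to prove \eqref{eq-GreeneQuasi} by rewriting both sides in a common rank‑generating form indexed by the subsets $K\subseteq E$, and then to deduce \eqref{eq-GreeneQuasi-2} by a direct substitution. The starting point is the following reformulation of the left‑hand side, valid for every $q\in\mathbb{Z}_{>0}$:
\begin{equation*}
    W_G(x,y;q)=\sum_{K\subseteq E}\frac{\abs{H_K(q)}}{\abs{H_E(q)}}\,(x-y)^{\abs{K}}\,y^{n-\abs{K}}.
\end{equation*}
To obtain this, recall (as in the proof of Lemma~\ref{lem-crit-thm}) that $\operatorname{Ker}\varphi_{G,q}=H_E(q)$ and that the restriction of $\varphi_{G,q}$ to $H_K(q)=\{u\in\mathbb{Z}_q^k\mid ug^j=0\ \text{for all}\ j\in K\}$ maps onto the codewords supported in $E\setminus K$ with fibres of size $\abs{H_E(q)}$, so $\abs{H_K(q)}/\abs{H_E(q)}=\abs{\{c\in C_G(q)\mid \operatorname{supp}(c)\cap K=\emptyset\}}$. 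Interchanging the summation over $K$ with that over $c$ and using $\bigl((x-y)+y\bigr)^{n-\operatorname{wt}(c)}=x^{n-\operatorname{wt}(c)}$ for each fixed codeword $c$ yields the displayed identity.

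Next I would analyse the right‑hand side of \eqref{eq-GreeneQuasi} under the substitution $u=\frac{x+(q-1)y}{x-y}$, $v=\frac{x}{y}$. Then $u-1=\frac{qy}{x-y}$, $v-1=\frac{x-y}{y}$, and in particular $(u-1)(v-1)=q$. Since $\Gamma_K\cong\bigoplus_{\ell=1}^{r(K)}\mathbb{Z}/e_{\ell,K}\mathbb{Z}$, where $e_{1,K}\mid\cdots\mid e_{r(K),K}$ are the elementary divisors of $G_K$, and multiplication by an integer $m$ on $\mathbb{Z}/e\mathbb{Z}$ has image of order $e/\gcd(m,e)$, we get
\begin{equation*}
    \frac{\abs{\Gamma_K}}{\abs{(u-1)(v-1)\Gamma_K}}=\frac{\abs{\Gamma_K}}{\abs{q\Gamma_K}}=\prod_{\ell=1}^{r(K)}\gcd(q,e_{\ell,K}),
\end{equation*}
which by Lemma~\ref{lem-num-ele-H} is $\abs{H_K(q)}/q^{\,k-r(K)}$. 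Substituting these expressions together with $u-1$ and $v-1$ into the definition of $\mathcal{Q}_G$, collecting the powers of $y$ and of $(x-y)$ (the exponent of $(x-y)$ collapses to $\abs{K}$ and that of $y$ to $n-\abs{K}$), and multiplying by the prefactor $\frac{y^{n-r(E)}(x-y)^{r(E)}}{\abs{\operatorname{Ker}\varphi_{G,q}}}=\frac{y^{n-r(E)}(x-y)^{r(E)}}{\abs{H_E(q)}}$, each $K$‑term turns into $\frac{\abs{H_K(q)}}{\abs{H_E(q)}}(x-y)^{\abs{K}}y^{n-\abs{K}}$. Summing over $K$ reproduces exactly the expression for $W_G(x,y;q)$ from the first paragraph, which proves \eqref{eq-GreeneQuasi}. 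Strictly speaking this is first an identity of rational functions in $x,y$ (valid whenever $x\neq y$ and $y\neq 0$), but as both sides are polynomials it holds identically.

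For \eqref{eq-GreeneQuasi-2} I would simply invert \eqref{eq-GreeneQuasi}: given $u,v$ with $(u-1)(v-1)=q\in\mathbb{Z}_{>0}$, set $x=v$ and $y=1$ in \eqref{eq-GreeneQuasi}. Then $\frac{x}{y}=v$ and $\frac{x+(q-1)y}{x-y}=\frac{v+(u-1)(v-1)-1}{v-1}=1+(u-1)=u$, so \eqref{eq-GreeneQuasi} becomes
\begin{equation*}
    W_G(v,1;q)=\frac{(v-1)^{r(E)}}{\abs{\operatorname{Ker}\varphi_{G,q}}}\,\mathcal{Q}_G(u,v),
\end{equation*}
and solving for $\mathcal{Q}_G(u,v)$ with $q=(u-1)(v-1)$ gives \eqref{eq-GreeneQuasi-2}.

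The main obstacle is the bookkeeping in the middle step: one has to correctly match the torsion‑quotient sizes $\abs{\Gamma_K}/\abs{q\Gamma_K}$ appearing in $\mathcal{Q}_G$ with the hyperplane counts $\abs{H_K(q)}$ coming from Lemma~\ref{lem-num-ele-H}, and then verify that, after the substitution, the exponents $r(E)-r(K)$, $\abs{K}-r(K)$, $r(E)$, $n-r(E)$ and $k-r(K)$ recombine to leave precisely $(x-y)^{\abs{K}}y^{n-\abs{K}}$ with coefficient $\abs{H_K(q)}/\abs{H_E(q)}$. Once this identification is secured, the binomial collapse in the first paragraph and the specialization giving \eqref{eq-GreeneQuasi-2} are routine.
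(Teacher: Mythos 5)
Your proof is correct and follows the same overall strategy as the paper's: substitute $u=\tfrac{x+(q-1)y}{x-y}$ and $v=\tfrac{x}{y}$, observe $(u-1)(v-1)=q$, compute $\abs{\Gamma_K}/\abs{q\Gamma_K}=\prod_{\ell}\gcd(q,e_{\ell,K})$, identify $\abs{\operatorname{Ker}\varphi_{G,q}}$ with $\abs{H_E(q)}$, and simplify each $K$-term. The one genuine difference is in the final matching step. The paper expands $(x-y)^{\abs{K}}$ binomially into a double sum over pairs $(J',K)$ and recognizes the earlier inclusion--exclusion formula \eqref{eq-AGi-sum} for $A_{G,i}(q)$; you instead prove the compact rank-generating identity
\begin{equation*}
W_G(x,y;q)=\sum_{K\subseteq E}\frac{\abs{H_K(q)}}{\abs{H_E(q)}}(x-y)^{\abs{K}}y^{n-\abs{K}}
\end{equation*}
directly, via the support interpretation $\abs{H_K(q)}/\abs{H_E(q)}=\abs{\{c\in C_G(q)\mid \operatorname{supp}(c)\cap K=\emptyset\}}$ and the binomial collapse $\bigl((x-y)+y\bigr)^{n-\operatorname{wt}(c)}=x^{n-\operatorname{wt}(c)}$, so that the match with the substituted $\mathcal{Q}_G$ is immediate. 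This is a modest but real simplification that avoids re-traversing \eqref{eq-AGi-sum}. Your specialization $x=v$, $y=1$, $q=(u-1)(v-1)$ to obtain \eqref{eq-GreeneQuasi-2} is identical to the paper's.

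One caveat, shared with the paper's own proof and not peculiar to yours: the step asserting that each $K$-term becomes exactly $\tfrac{\abs{H_K(q)}}{\abs{H_E(q)}}(x-y)^{\abs{K}}y^{n-\abs{K}}$ silently uses $r(E)=k$. By Lemma~\ref{lem-num-ele-H}, one has $\prod_\ell\gcd(q,e_{\ell,K})=\abs{H_K(q)}/q^{k-r(K)}$, and after combining with the prefactor $q^{r(E)-r(K)}$ the $K$-term actually carries a residual factor $q^{r(E)-k}$. Likewise the paper's equality $\abs{\operatorname{Ker}\varphi_{G,q}}=\prod_{\ell}\gcd(q,e_{\ell,E})$ omits the same $q^{k-r(E)}$ factor. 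Both are harmless only under the implicit hypothesis that $G$ has full row rank, which matches the standard convention for a generator matrix; when $r(E)<k$ the displayed formula \eqref{eq-GreeneQuasi} itself needs the corresponding correction.
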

\begin{proof}
If Equation~\eqref{eq-GreeneQuasi} holds, then Equation~\eqref{eq-GreeneQuasi-2} follows from the direct calculation:
\begin{align*}
    W_G(v,1;(u-1)(v-1))&=\frac{(v-1)^{r(E)}}{\abs{ \operatorname{Ker}\varphi_{G,(u-1)(v-1)}}}\mathcal{Q}_G\left(\frac{v+(u-1)(v-1)-1}{v-1},v\right)\\
    &=\frac{(v-1)^{r(E)}}{\abs{ \operatorname{Ker}\varphi_{G,(u-1)(v-1)}}}\mathcal{Q}_G\left(u,v\right).
\end{align*}
Hence, we show Equation~\eqref{eq-GreeneQuasi}.
Let $e_{1,K},\dots,e_{r(K),K}$ denote the elementary divisors of $G_K$.
Suppose $\displaystyle u=\frac{x+(q-1)y}{x-y}$ and $\displaystyle v=\frac{x}{y}$.
By Lemma \ref{lem-num-ele-H} and the definition of $H_E(q)$, we have
\begin{equation}\label{eq-thm-quasi-greene-1}
    \abs{\operatorname{Ker}\varphi_{G,q}}=\abs{H_E(q)}={\prod_{\ell = 1}^{r(E)} \gcd(q,e_{\ell, E})}.
\end{equation}
Since $\mathbb{Z}^k/\langle g^j\mid  j\in K\rangle\cong \mathbb{Z}^{k-r(K)}\oplus\bigoplus_{\ell=1}^{r(K)}\mathbb{Z}/e_{\ell,K}\mathbb{Z}$, it follows that $\Gamma_K=\bigoplus_{\ell=1}^{r(K)}\mathbb{Z}/e_{\ell,K}\mathbb{Z}$.
Moreover, the following holds
\begin{equation*}
    \left(u-1\right)\left(v-1\right)\Gamma_K=\frac{qy}{x-y}\cdot \frac{x-y}{y}\Gamma_K=q\bigoplus_{\ell=1}^{r(K)}\mathbb{Z}/e_{\ell,K}\mathbb{Z}\cong \bigoplus_{\ell=1}^{r(K)}\mathbb{Z}/\frac{e_{\ell,K}}{\gcd(q,e_{\ell,K})}\mathbb{Z},
\end{equation*}
which yields
\begin{equation}\label{eq-thm-quasi-greene-2}
    \frac{\abs{\Gamma_K}}{\abs{ (u-1)(v-1)\Gamma_K}}=\frac{\prod_{\ell=1}^{r(K)}e_{\ell,K}}{\prod_{\ell=1}^{r(K)}e_{\ell,K}/\gcd(q,e_{\ell,K})}=\prod_{\ell=1}^{r(K)}\gcd(q,e_{\ell,K}).
\end{equation}
Furthermore, a direct calculation shows that the following holds for each $K\subseteq E$:
\begin{multline}\label{eq-thm-quasi-greene-3}
    y^{n-{r(E)}}(x-y)^{r(E)}\left(\frac{qy}{x-y}\right)^{{r(E)}-r(K)}\left(\frac{x-y}{y}\right)^{\abs{K}-r(K)}
    =q^{{r(E)}-r(K)}y^{n-\abs{K}}(x-y)^{\abs{K}}\\
    =q^{{r(E)}-r(K)}y^{n-\abs{K}}\left(\sum_{J'\subseteq K}x^{\abs{K\setminus J'}}(-y)^{\abs{J'}}\right)
    =\sum_{J'\subseteq K}q^{{r(E)}-r(K)}x^{\abs{K\setminus J'}}y^{n-\abs{K\setminus J'}}(-1)^{\abs{J'}}.
\end{multline}
By substituting Equations \eqref{eq-thm-quasi-greene-1}, \eqref{eq-thm-quasi-greene-2}, and \eqref{eq-thm-quasi-greene-3}, the right-hand side of Equation \eqref{eq-GreeneQuasi} is 
\begin{align*}
    &\frac{y^{n-{r(E)}}(x-y)^{r(E)}}{\abs{ \operatorname{Ker}\varphi_{G,q}}}\mathcal{Q}_G\left(\frac{x+(q-1)y}{x-y},\frac{x}{y}\right)\\
    &=\sum_{K\subseteq E}\frac{\abs{\Gamma_K}}{\abs{(u-1)(v-1)\Gamma_K}}\frac{y^{n-{r(E)}}(x-y)^{r(E)}}{\abs{\operatorname{Ker}\varphi_{G,q}}}\left(\frac{qy}{x-y}\right)^{{r(E)}-r(K)}\left(\frac{x-y}{y}\right)^{\abs{K}-r(K)}\\
    &=\sum_{(J',K):J'\subseteq K\subseteq E}\frac{\prod_{\ell=1}^{r(K)}\gcd(q,e_{\ell,K})}{{\prod_{\ell = 1}^{r(E)} \gcd(q,e_{\ell, E})}}q^{{r(E)}-r(K)}x^{\abs{K\setminus J'}}y^{n-\abs{K\setminus J'}}(-1)^{\abs{J'}}
\end{align*}
where the sum is taken over pairs $(J',K)$ of subsets of $E$ satisfying $J'\subseteq K$.
After replacing $J'$ by $K\setminus J$, the right-hand side of Equation \eqref{eq-GreeneQuasi} becomes
\begin{align*}
    &\sum_{(J,K):J\subseteq K\subseteq E}\frac{\prod_{\ell=1}^{r(K)}\gcd(q,e_{\ell,K})}{{\prod_{\ell = 1}^{r(E)} \gcd(q,e_{\ell, E})}}q^{{r(E)}-r(K)}x^{\abs{J}}y^{n-\abs{J}}(-1)^{\abs{K\setminus J}}\\
    &=\sum_{i=0}^n x^{i}y^{n-i}\sum_{\substack{J \subseteq E \\ \abs{J} = n - i}}\sum_{K:J \subseteq K \subseteq E} (-1)^{\abs{K} - \abs{J}} \frac{\prod_{\ell = 1}^{r(K)} \gcd(q,e_{\ell, K})}{\prod_{\ell = 1}^{r(E)} \gcd(q,e_{\ell, E})} q^{r(E) - r(K)}
    =W_G(x,y;q)
\end{align*}
where, in the last line, we used Equation~\eqref{eq-AGi-sum} and $W_G(x,y;q)=\sum_{i=0}^n A_{G,i}(q)x^iy^{n-i}$.
\end{proof}

\begin{rem}
    Two matroidal structures can be constructed from an integer matrix $G$: an arithmetic matroid $(M[G],\mu_G)$ and a real polymatroid $(E,r_{G,q})$.
    Both are defined as generalizations of the classical vector matroid associated with $G$.
    Informally, an arithmetic matroid is a matroid that adds information about the torsion arising from a $\mathbb{Z}$-module, while a real polymatroid can be viewed as a generalization obtained by allowing coefficients in a finite ring or a finite group.
    
    An arithmetic matroid is defined as a pair consisting of a matroid $M$ on the ground set $E$ and a multiplicity function $\mu \colon 2^E\to \mathbb{Z}$ satisfying certain axioms.
    For details of arithmetic matroids, see \cite{DAM2013}.
    From an integer matrix $G$, the arithmetic matroid $(M[G],\mu_G)$ is defined as the pair consisting of the vector matroid $M[G]$ over $\mathbb{R}$ and the multiplicity function $\mu_G$ given by $\mu_G(J)=\prod_{\ell=1}^{r(J)}e_{\ell,J}$ for $J\subseteq E$, where $e_{1,J},\dots,e_{r(J),J}$ are the elementary divisors of $G_J$.
    As pointed out by Br\"and\'en and Moci~\cite{BM2014}, the Tutte quasi-polynomial $\mathcal{Q}_G$ is not an invariant of the arithmetic matroid $(M[G],\mu_G)$, but rather an invariant of $G$.
    It was later shown by Fink and Moci \cite{FM2016} that the Tutte quasi-polynomial is an invariant of the matroids over $\mathbb{Z}$.
    The arithmetic matroid $(M[G],\mu_G)$ is constructed from the matroid over $\mathbb{Z}$ represented by $G$, and the arithmetic Tutte polynomial of $(M[G],\mu_G)$, which is one of the invariants of arithmetic matroids, is obtained as a constituent of the Tutte quasi-polynomial.
    
    On the other hand, the Tutte quasi-polynomial is determined by real polymatroids arising from a sequence of linear codes.
    A real polymatroid $(E,r_{G,q})$ is constructed as the pair consisting of the ground set $E=[n]$ and a real-valued function $r_{G,q}$ on the power set $2^E$ as follows: for $J\subseteq E$, let $\pi_J \colon \mathbb{Z}_q^n\to \mathbb{Z}_q^J$ be the projection defined as eliminating the coordinates corresponding to $E\setminus J$, and define $r_{G,q}(J)=\log_q \abs{\pi_J(C_G(q))}$ (see \cite[Section 3]{SWX24}).
    As shown in \cite[Theorem 10.3]{SWX24}, the weight enumerator of $C_G(q)$ is an invariant of $(E,r_{G,q})$.
    Also, Theorem \ref{thm-GreeneQuasi} shows that the weight enumerator $W_G(x,y;q)$ and Tutte quasi-polynomial $\mathcal{Q}_G(u,v)$ are equivalent invariants.
    Therefore, the Tutte quasi-polynomial can be recovered from the infinite sequence $\bigl((E,r_{G,q})\bigr)_{q\in \mathbb{Z}_{>0}}$ of real polymatroids.
    Moreover, the sequence $\bigl((E,r_{G,q})\bigr)_{q\in \mathbb{Z}_{>0}}$ determines the arithmetic matroid $(M[G],\mu_G)$, although we do not discuss the details here.
\end{rem}

\section{Characteristic quasi-polynomial of certain classes of codes}\label{sec-NkZk}

In this section, we consider two families of codes associated with two families of matroids, denoted by $N_k$ and $Z_k$ for $k\ge 2$. 
As in \cite[Example 9.4.18]{oxley2006matroid}, $N_k$ is a binary matroid represented by the $(0, 1)$-matrix
\begin{equation*}
  \widetilde{N}_{k} = \bigl(I_k \mid J_k - I_k\bigr),
\end{equation*}
where $I_k$ is the $k\times k$ identity matrix and $J_k$ is the $k\times k$ all-ones matrix. 
This matroid is used to show that there is no polynomial $f\in\mathbb{Z}[x]$ such that, for every $n\in\mathbb{Z}_{>0}$ and every matroid $M$ on $n$ elements, at most $f(n)$ queries to an independence oracle suffice to decide whether $M$ is binary (for more details, see also \cite[Proposition 9.4.17]{oxley2006matroid}).

The matroid $Z_k$ is the binary matroid represented by the matrix $\widetilde{Z}_{k}$ obtained from $\widetilde{N}_{k}$ by appending the all-ones column vector $1_k$:
\begin{equation*}
  \widetilde{Z}_{k}=(\widetilde{N}_{k} \mid 1_k)=\bigl(I_k \mid J_k - I_k \mid 1_k\bigr).
\end{equation*}
This matroid is characterized as the unique binary $k$-spike with tip $t$ \cite[Proposition 12.2.20]{oxley2006matroid}.

Let ${1}_{k-1},{0}_{k-1}\in\mathbb{Z}^{k-1}$ be the all-ones and all-zeros column vectors, respectively, and set
\begin{equation*}
  Q_{k} \coloneq
  \begin{pmatrix}
    I_{k-1} & -{1}_{k-1} \\
    {0}_{k-1}^{\top} & -1
  \end{pmatrix} \in \GL(k,\mathbb{Z}).
\end{equation*}
Let $P_{N_{k}}\in\GL(2k,\mathbb{Z})$ be the permutation matrix obtained by swapping the $k$th and $2k$th columns of the identity matrix, and set $D_{N_{k}}\coloneq \diag(I_k,-I_k)\in\GL(2k,\mathbb{Z})$.
Then a direct computation yields
\begin{equation} \label{eq-equiv-Nk}
  Q_{k} \widetilde{N}_{k} P_{N_{k}} D_{N_{k}} = \begin{pmatrix}
    I_{k-1} & {1}_{k-1} & I_{k-1} & {1}_{k-1} \\
    {0}_{k-1}^{\top} & 0 & {1}_{k-1}^{\top} & 1
\end{pmatrix}.
\end{equation}
Similarly, let $P_{Z_{k}} \in \GL(2k+1,\mathbb{Z})$ be the permutation matrix obtained by swapping the $k$th and $2k$th columns of the identity matrix, and set $D_{Z_{k}} \coloneq \diag(I_k,-I_{k+1})\in\GL(2k+1,\mathbb{Z})$.
Then
\begin{equation}\label{eq-equiv-Zk}
  Q_{k} \widetilde{Z}_{k} P_{Z_{k}} D_{Z_{k}} = \begin{pmatrix}
  I_{k-1} & {1}_{k-1} & I_{k-1} & {1}_{k-1} & {0}_{k-1} \\
  {0}_{k-1}^{\top} & 0 & {1}_{k-1}^{\top} & 1 & 1
  \end{pmatrix}.
\end{equation}
Since $Q_{k}$ and $P_{N_{k}}D_{N_{k}}$ (resp. $P_{Z_{k}}D_{Z_{k}}$) are unimodular, their reductions modulo $q$ are invertible for every $q \in \mathbb{Z}_{>0}$.
Consequently, for each $q$, the matrices $\widetilde{Z}_{k}$ and $Q_{k}\widetilde{Z}_{k}P_{N_{k}}D_{N_{k}}$
yield the $\mathbb{Z}_{q}$-linear codes which have the same weight enumerators. This is also the case for $\widetilde{Z}_{k}$ and $Q_{k}\widetilde{Z}_{k}P_{Z_{k}}D_{Z_{k}}$.
In particular, we may use the transformed matrices as generator matrices for $\mathcal{N}_{k}(q) \coloneq C_{\widetilde{N}_{k}}(q)$ and $\mathcal{Z}_{k}(q) \coloneq C_{\widetilde{Z}_{k}}(q)$.

We now consider some weights of codes associated with
$\widetilde{N}_{k}$ and $\widetilde{Z}_{k}$
by viewing them as integer $(0,1)$-matrices in
$\mathbb{Z}^{k\times 2k}$ and $\mathbb{Z}^{k\times (2k+1)}$, respectively.
Although a complete description of the weight enumerators of $\mathcal{N}_{k}(q)$ and $\mathcal{Z}_{k}(q)$
appears to be delicate, one can still derive simple parity obstructions.

\begin{prop} \label{prop-vanished-weight-Nk-Zk}
  Let $k, q \in \mathbb{Z}_{> 0}$ with $k \geq 2$.
  The code $\mathcal{N}_{k}(q)$ has no codewords of odd weight less than $k$.
  The code $\mathcal{Z}_{k}(q)$ has no codewords of odd weight at most $k$.
  Furthermore, both code $\mathcal{N}_{k}(q)$ and $\mathcal{Z}_{k}(q)$ have no codewords of weight $2$ unless $k = 2$.
\end{prop}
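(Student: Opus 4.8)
The plan is to write the codewords of $\mathcal{N}_{k}(q)$ and $\mathcal{Z}_{k}(q)$ explicitly and to read off the weight from the structure of the coordinate vector. First I would fix $u=(u_1,\dots,u_k)\in\mathbb{Z}_q^k$ and set $S=u_1+\cdots+u_k$. Since the $j$-th column of $J_k-I_k$ is the all-ones column with its $j$-th entry replaced by $0$, the codeword of $\mathcal{N}_{k}(q)$ corresponding to $u$ is
\begin{equation*}
u\widetilde{N}_{k}=(u_1,\dots,u_k,\,S-u_1,\dots,S-u_k),
\end{equation*}
and, since the last column of $\widetilde{Z}_{k}$ is $1_k$, the codeword of $\mathcal{Z}_{k}(q)$ corresponding to $u$ is obtained by appending the coordinate $u\cdot 1_k=S$:
\begin{equation*}
u\widetilde{Z}_{k}=(u_1,\dots,u_k,\,S-u_1,\dots,S-u_k,\,S).
\end{equation*}

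Next I would write $a=\abs{\{i\in[k]\mid u_i=0\}}$ and $b=\abs{\{i\in[k]\mid u_i=S\}}$, so that $\wt(u\widetilde{N}_{k})=(k-a)+(k-b)=2k-a-b$, while $\wt(u\widetilde{Z}_{k})$ equals $2k-a-b$ if $S=0$ and $2k+1-a-b$ if $S\neq 0$. The argument then splits into two cases. If $S=0$, then $a=b$ and the appended coordinate vanishes, so $\wt(u\widetilde{N}_{k})=\wt(u\widetilde{Z}_{k})=2(k-a)$ is even. If $S\neq 0$, then the index sets $\{i\in[k]\mid u_i=0\}$ and $\{i\in[k]\mid u_i=S\}$ are disjoint, hence $a+b\le k$, which forces $\wt(u\widetilde{N}_{k})\ge k$ and $\wt(u\widetilde{Z}_{k})\ge k+1$.

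From this dichotomy the first two assertions are immediate: a codeword of odd weight cannot have $S=0$, so it has $S\neq 0$, and then its weight is at least $k$ in $\mathcal{N}_{k}(q)$ and at least $k+1$ in $\mathcal{Z}_{k}(q)$; hence $\mathcal{N}_{k}(q)$ has no codeword of odd weight less than $k$, and $\mathcal{Z}_{k}(q)$ has no codeword of odd weight at most $k$. For the remaining assertion, suppose a codeword has weight $2$. If $S=0$, then $2(k-a)=2$ gives $a=k-1$, so $u$ has a single nonzero coordinate $u_j$, whence $S=u_j\neq 0$, a contradiction. If $S\neq 0$ and $k\ge 3$, then the weight is at least $k\ge 3$, again a contradiction. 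Hence for $k\ge 3$ neither code has a codeword of weight $2$; the exception $k=2$ is genuine, since for $q\ge 2$ the vector $u=(1,0)\in\mathbb{Z}_q^2$ yields the $\mathcal{N}_{2}(q)$-codeword $(1,0,0,1)$ of weight $2$.

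I do not expect a genuine obstacle. The one point that needs a little care is the passage from the parity observation to the quantitative bounds $\wt(u\widetilde{N}_{k})\ge k$ and $\wt(u\widetilde{Z}_{k})\ge k+1$ in the case $S\neq 0$, which hinges on the disjointness $\{i\in[k]\mid u_i=0\}\cap\{i\in[k]\mid u_i=S\}=\emptyset$; once that is noted, everything reduces to the direct computations above.
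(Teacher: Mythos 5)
Your proof is correct, and for the two parity/bound assertions it is the same argument as the paper's, just phrased with the counts $a$ and $b$ instead of pair-by-pair: the dichotomy on $S=0$ versus $S\neq 0$, the observation that $u_i=0$ and $S-u_i=0$ happen together when $S=0$ and are mutually exclusive when $S\neq 0$, and the resulting bounds $\wt\ge k$ (for $\mathcal{N}_k$) and $\wt\ge k+1$ (for $\mathcal{Z}_k$) are exactly what the paper does.

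For the weight-$2$ claim you take a genuinely different, and arguably cleaner, route. The paper exits the $S$-dichotomy and instead exploits the systematic form $G=(I_k\mid\ast)$ to deduce $\wt(u)\le \wt(uG)\le 2$, then reduces to two explicit subcases on the shape of $u$ and computes the weight directly. You stay entirely inside the $a,b$ bookkeeping: in the case $S=0$ you solve $2(k-a)=2$ to get $a=k-1$, so $u$ has a unique nonzero coordinate, whence $S\neq 0$, a contradiction; in the case $S\neq 0$ you simply reuse the bound $\wt\ge k\ge 3$. This avoids introducing the extra observation about systematic generator matrices and avoids the ``without loss of generality'' casework, at the modest cost of having to note that the map $u\mapsto S$ factors through the support of $u$. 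Your closing remark exhibiting $(1,0,0,1)\in\mathcal{N}_2(q)$ correctly shows that the exclusion of $k=2$ is necessary. No gap.
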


\begin{proof}
  Write a codeword of $\mathcal{Z}_{k}(q)$ in the form
  \begin{equation*}
    {c}=(u_{1}, \dots, u_{k})\widetilde{Z}_{k} = (u_1,\dots,u_k \mid u_{0}-u_1,\dots,u_{0}-u_k \mid u_{0}),
    \qquad
    \text{where}\quad u_{0} \coloneq \sum_{i=1}^k u_i.
  \end{equation*}
  
  First, assume that $u=0$. Then for each $i$, the pair $(u_i,\,u_{0}-u_i)=(u_i,\,-u_i)$ is either
  $(0,0)$ or consists of two nonzero entries. Hence, the contribution of the first $2k$ coordinates
  to the weight is even, and since the last coordinate equals $u_{0}=0$, the total weight of
  $c$ is even. This shows, in particular, that in the corresponding subcase for $\mathcal{N}_{k}(q)$
  (where the last coordinate is absent) the weight is also even.
  
   Next, assume that $u_{0} \neq 0$. For each $i$, at least one of $u_i$ and $u_{0}-u_{i}$ is nonzero
  (otherwise $u_{0} = u_{i} + (u_{0} - u_{i}) = 0$), so each of the $k$ pairs contributes at least one nonzero coordinate.
  Together with the last coordinate $u_{0} \neq 0$, we obtain $\wt({c})\ge k+1$.
  Therefore, $\mathcal{Z}_{k}(q)$ has no codewords of odd weight at most $k$.

  For $\mathcal{N}_{k}(q)$, the same argument applies to the first $2k$ coordinates:
  if $u_{0} \neq 0$, then each pair $(u_i,\,u_{0}-u_i)$ contributes at least one nonzero entry,
  so $\wt(c) \geq k$. Hence $\mathcal{N}_{k}(q)$ has no codewords of odd weight smaller than $k$.

  Finally, we suppose $k\ge 3$ and show that there are no codewords of weight $2$.
  Since $G\in \{\widetilde{N}_k,\widetilde{Z}_k\}$ forms $G=(I_k| \:\ast\:)$, we have $\wt(u)\le \wt(uG)$ for any $u\in \mathbb{Z}_q^k$.
  Thus, if there is $u\in \mathbb{Z}_q^k$ such that $\wt(uG)=2$ then $\wt(u)\le 2$.
  Without loss of generality, suppose $u_3=\dots=u_k=0$.
  If $u_1\neq 0$ and $u_2=0$, then we have $\wt(u\widetilde{N}_k)=k$ and $\wt(u\widetilde{Z}_k)=k+1$ by direct calculation.
  If both $u_1$ and $u_2$ are nonzero, then we have
  \begin{align*}
      \wt(u\widetilde{N}_k)&=\wt(u_1,u_2,0,\dots,0\mid u_2,u_1,u_1+u_2,\dots,u_1+u_2)\ge 4,\\
      \wt(u\widetilde{Z}_k)&=\wt(u_1,u_2,0,\dots,0\mid u_2,u_1,u_1+u_2,\dots,u_1+u_2\mid u_1+u_2)\ge 4
  \end{align*}
  as required.
\end{proof}

Since $\wt((1,-1,0,\dots,0)G)=4$ for $G\in \{\widetilde{N}_k,\widetilde{Z}_k\mid k\ge 4\}$, we have the following corollary.
\begin{cor}
Let $d^{\widetilde{N}_k}_q$ and $d^{\widetilde{Z}_k}_q$ be the minimum weight of $\mathcal{N}_k(q)$ and $\mathcal{Z}_k(q)$, respectively.
We have
    \begin{equation*}
        d^{\widetilde{N}_k}_q=\begin{cases}
            k&(k< 4),\\
            4&(k\ge 4),
        \end{cases}\qquad\text{and}\qquad
        d^{\widetilde{Z}_k}_q=\begin{cases}
            k+1&(k< 4),\\
            4&(k\ge 4).
        \end{cases}
    \end{equation*}
\end{cor}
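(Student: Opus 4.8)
The plan is to determine each minimum weight by matching a lower bound, coming from the parity obstructions of Proposition~\ref{prop-vanished-weight-Nk-Zk}, against an upper bound witnessed by an explicit codeword, treating the three regimes $k=2$, $k=3$, and $k\ge 4$ separately. (When $q=1$ both codes equal $\{0\}$ and the minimum weight is $+\infty$, so we assume $q\ge 2$ throughout.)

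For the lower bounds I would argue as follows. For $\mathcal{N}_k(q)$, Proposition~\ref{prop-vanished-weight-Nk-Zk} rules out codewords of odd weight less than $k$ and, when $k\ne 2$, codewords of weight $2$; hence all weights in $\{1,\dots,k-1\}$ are excluded when $k\in\{2,3\}$, giving $d^{\widetilde{N}_k}_q\ge k$, while for $k\ge 4$ the weights $1,2,3$ are all excluded, giving $d^{\widetilde{N}_k}_q\ge 4$. For $\mathcal{Z}_k(q)$, the proposition rules out codewords of odd weight at most $k$ and, when $k\ne 2$, codewords of weight $2$, which already forces $d^{\widetilde{Z}_k}_q\ge 4$ for every $k\ge 3$. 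The remaining case $\mathcal{Z}_2(q)$ is not covered by the proposition beyond weight $1$, so there I would argue by hand: a codeword has the form $(u_1,u_2)\widetilde{Z}_2=(u_1,u_2,u_2,u_1,u_1+u_2)$, and if exactly one of $u_1,u_2$ is nonzero its weight is $3$, whereas if both are nonzero its weight is at least $4$; hence $d^{\widetilde{Z}_2}_q\ge 3$.

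For the upper bounds I would produce explicit codewords. The first row $(1,0,\dots,0)\widetilde{N}_k$ equals $(1,0,\dots,0\mid 0,1,\dots,1)$ and has weight $k$, which settles $\mathcal{N}_k(q)$ for $k=2,3$; likewise $(1,0)\widetilde{Z}_2=(1,0,0,1,1)$ has weight $3$, settling $\mathcal{Z}_2(q)$. For all remaining cases I would use the witness $(1,-1,0,\dots,0)G$: since $(1,-1,0,\dots,0)$ has coordinate-sum $0$ in every $\mathbb{Z}_q$, one checks that $(1,-1,0,\dots,0)\widetilde{N}_k=(1,-1,0,\dots,0\mid -1,1,0,\dots,0)$ has weight exactly $4$ for every $k\ge 3$ and every $q\ge 2$, and that appending the all-ones column merely adds a zero coordinate, so $(1,-1,0,\dots,0)\widetilde{Z}_k$ also has weight $4$ for every $k\ge 3$. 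Combining these upper bounds with the lower bounds above yields the claimed formulas.

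The computations involved are routine; the points that need genuine care are the lower bound for $\mathcal{Z}_2(q)$, which lies outside the reach of Proposition~\ref{prop-vanished-weight-Nk-Zk} and so requires the direct parametrisation above, and checking that the witness $(1,-1,0,\dots,0)G$ really has weight $4$ uniformly in the modulus $q$ (in particular modulo $2$, where $-1=1$ but the coordinate-sum is still $0$) and for $k=3$ as well as $k\ge 4$, since the remark preceding the corollary records this only for $k\ge 4$.
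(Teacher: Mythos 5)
Your proof is correct and takes the same approach the paper intends: lower bounds from the parity obstructions of Proposition~\ref{prop-vanished-weight-Nk-Zk}, upper bounds from explicit weight-$k$, weight-$3$, or weight-$4$ codewords. You also correctly identify a point the paper glosses over: the remark before the corollary only exhibits the weight-$4$ witness $(1,-1,0,\dots,0)G$ for $k\ge 4$, and Proposition~\ref{prop-vanished-weight-Nk-Zk} only excludes weight $2$ for $k\neq 2$, so the case $\mathcal{Z}_2(q)$ is not covered by the cited results and genuinely needs the direct parametrisation $(u_1,u_2,u_2,u_1,u_1+u_2)$ that you supply; your verification that the witness retains weight $4$ uniformly in $q$ (including $q=2$) and your check that it works already at $k=3$ for $\widetilde{Z}_3$ are likewise details the paper leaves implicit.
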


We close this section with the computation of
the characteristic quasi-polynomials associated with
$\widetilde{N}_{k}$ and $\widetilde{Z}_{k}$.
For $G\in\{\widetilde{N}_{k},\,\widetilde{Z}_{k}\}$,
the $\mathbb{Z}$-homomorphism $\varphi_G$ is injective.
Hence, by Corollary~\ref{cor-weight=n}, for every $q\in\mathbb{Z}_{>0}$ we have
\begin{equation*}
  \abs{\mathbb{Z}_{q}^{k} \setminus \bigcup_{j \in E} H_{j}^{G}(q)} = A_{G, n}(q),
\end{equation*}
where we now write $H_{j}^{G}(q)$ (instead of $H_j(q)$ as in the previous sections) to make explicit the dependence on the chosen matrix $G$.
Here, $E$ denotes the set of column indexes of $G$ and $n\coloneq\abs{E}$.

To this end, we use additive characters of $\mathbb{Z}_q$.
In the remaining of this section, we sometimes identify $\mathbb{Z}_q$ as $\{0,1,\dots,q-1\}\subseteq\mathbb{Z}$.
For $q\in\mathbb{Z}_{>0}$, let $\zeta_q \coloneq \exp(2\pi i/q)$ be a primitive $q$-th root of unity.
For each $r\in\mathbb{Z}_q$, define a character $\psi_r \colon \mathbb{Z}_q\to\mathbb{C}^\times$ as
\begin{equation*}
  \psi_r(a)\coloneq \zeta_q^{ra}=\exp\!\left(\frac{2\pi i}{q}\,ra\right)
  \qquad(a\in\mathbb{Z}_q).
\end{equation*}
These characters satisfy the orthogonality relations
\begin{equation*}
  \frac{1}{q}\sum_{a\in\mathbb{Z}_q}\psi_r(a)=
  \begin{cases}
    1 & \text{if } r=0,\\
    0 & \text{if } r\neq 0,
  \end{cases}
  \quad \text{and} \quad
  \frac{1}{q}\sum_{r\in\mathbb{Z}_q}\psi_r(a)=
  \begin{cases}
    1 & \text{if } a=0,\\
    0 & \text{if } a\neq 0.
  \end{cases}
\end{equation*}
Note that, for all $a,a_1,a_2,r,r_1,r_2\in\mathbb{Z}_q$, we have
\begin{equation*}
  \psi_{r_1+r_2}(a) = \psi_{r_1}(a)\psi_{r_2}(a),\qquad
  \psi_r(a_1+a_2) = \psi_r(a_1)\psi_r(a_2),\qquad
  \psi_r(a) = \zeta_q^{ra}=\psi_a(r).
\end{equation*}

For $q\in\mathbb{Z}_{>0}$ and $G\in\{\widetilde{N}_{k},\,\widetilde{Z}_{k}\}$, set
\begin{equation*}
  \chi_{G}^{\mathrm{quasi}}(q) \coloneq \left|\mathbb{Z}_q^{k}\setminus\bigcup_{j\in E(G)}H_{G,j}(q)\right|.
\end{equation*}
We are now ready to state explicit formulas for the characteristic quasi-polynomials
$\chi_{G}^{\mathrm{quasi}}(q)$ for $G \in \{ \widetilde{N}_{k},\, \widetilde{Z}_{k} \}$, using character sums.

\begin{thm}\label{thm-char-quasi-for-Nk-and-Zk}
  For every $k, q\in\mathbb{Z}_{>0}$, we have
  \begin{align*}
    &\chi_{\widetilde{Z}_{k}}^{\mathrm{quasi}}(q)
    =
    \frac{(q - 1)(q - 2)^{k} - (-2)^{k}(2q - 1)}{q}+(-1)^{k}\sum_{j = 0}^{k} \binom{k}{j}\gcd(j - 1, q), \\
    &\chi_{\widetilde{N}_{k}}^{\mathrm{quasi}}(q) = \frac{(q -1)^{k} + (-1)^{k}(q - 1)}{q} + \chi_{\widetilde{Z}_{k}}^{\mathrm{quasi}}(q),
  \end{align*}
where $\gcd(-1, q)=1$ and $\gcd(0, q)=q$.
\end{thm}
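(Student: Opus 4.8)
The plan is to compute each characteristic quasi-polynomial directly from its definition as the number of $u\in\mathbb{Z}_q^k$ for which every coordinate of $uG$ is nonzero, using the complete orthogonal system $\{\psi_r\}_{r\in\mathbb{Z}_q}$ of additive characters to turn the conditions ``coordinate $\neq 0$'' into character sums. As in the proof of Proposition~\ref{prop-vanished-weight-Nk-Zk}, for $u=(u_1,\dots,u_k)$ and $u_0=\sum_{i=1}^k u_i$ we have $u\widetilde{Z}_k=(u_1,\dots,u_k\mid u_0-u_1,\dots,u_0-u_k\mid u_0)$ and $u\widetilde{N}_k=(u_1,\dots,u_k\mid u_0-u_1,\dots,u_0-u_k)$, so $\chi^{\mathrm{quasi}}_{\widetilde{Z}_k}(q)$ counts the $u$ with $u_i\neq 0$ and $u_i\neq u_0$ for all $i$ and with $u_0\neq 0$, and $\chi^{\mathrm{quasi}}_{\widetilde{N}_k}(q)$ counts the same set of $u$ with the last condition dropped.

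The key device is to regard $s=u_0$ as an independent variable over $\mathbb{Z}_q$ and to encode the relation $\sum_i u_i=s$ by $\tfrac1q\sum_{r\in\mathbb{Z}_q}\psi_r(\sum_i u_i-s)$ (which equals $1$ if $\sum_i u_i=s$ and $0$ otherwise); this factorizes the sum over $u$ into a product of $k$ identical one-variable sums. For $\widetilde{Z}_k$ this yields
\begin{equation*}
  \chi^{\mathrm{quasi}}_{\widetilde{Z}_k}(q)=\sum_{s\in\mathbb{Z}_q\setminus\{0\}}\frac1q\sum_{r\in\mathbb{Z}_q}\psi_r(-s)\Bigl(\sum_{a\in\mathbb{Z}_q\setminus\{0,s\}}\psi_r(a)\Bigr)^{\!k}.
\end{equation*}
I would split off the term $r=0$, which contributes $\frac{q-1}{q}(q-2)^k$, and for $r\neq 0$ use $\sum_{a\in\mathbb{Z}_q\setminus\{0,s\}}\psi_r(a)=-(1+\zeta_q^{rs})$. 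Expanding $(1+\zeta_q^{rs})^k$ by the binomial theorem and interchanging the sums over $j$, $r$, and $s$ reduces the remainder to the double sum $\sum_{s\neq 0}\sum_{r\neq 0}\zeta_q^{(j-1)rs}$, which by inclusion--exclusion on the full sums over $\mathbb{Z}_q\times\mathbb{Z}_q$ equals $q\gcd(j-1,q)-2q+1$; here one uses $\#\{r\in\mathbb{Z}_q:q\mid (j-1)r\}=\gcd(j-1,q)$ with the conventions $\gcd(-1,q)=1$ and $\gcd(0,q)=q$. Collecting the pieces and using $\sum_{j=0}^k\binom kj=2^k$ gives the stated formula for $\chi^{\mathrm{quasi}}_{\widetilde{Z}_k}$.

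For $\widetilde{N}_k$ I would partition the count by the value of $u_0$: the $u$ with $u_0=s\neq 0$ contribute precisely the sum already computed for $\widetilde{Z}_k$, while the $u$ with $u_0=0$ contribute the number of $u\in(\mathbb{Z}_q\setminus\{0\})^k$ with $\sum_i u_i=0$, which a one-line character (or inclusion--exclusion) computation evaluates to $\frac{(q-1)^k+(-1)^k(q-1)}{q}$. Adding the two pieces gives the second identity.

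The bulk of the work, the final algebraic simplification in the $\widetilde{Z}_k$ case, is mechanical; the genuinely delicate points are the bookkeeping at $s=0$ versus $s\neq 0$---the exclusion set $\{0,s\}$ degenerates to a single element when $s=0$, so one must keep $s\neq 0$ throughout and not apply the $(q-2)$-term formula there---and the exponent $j=1$, where $\zeta_q^{(j-1)rs}\equiv 1$ and the double sum is $(q-1)^2=q\cdot q-2q+1$, matching exactly the convention $\gcd(0,q)=q$. One should also check that the factors $1/q$ cancel in the final answer, which is automatic since $\chi^{\mathrm{quasi}}_G(q)$ counts a finite set.
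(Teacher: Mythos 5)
Your proposal is correct and follows essentially the same route as the paper: additive characters to encode the nonvanishing conditions, splitting off the trivial character, the binomial expansion, and the evaluation of $\sum_{s\neq 0}\sum_{r\neq 0}\zeta_q^{(j-1)rs}=q\gcd(j-1,q)-2q+1$, all of which the paper does in the same order after arriving at the same key double sum. The only divergence is cosmetic: the paper first applies the unimodular transformation of Equations~\eqref{eq-equiv-Nk}--\eqref{eq-equiv-Zk} so that $u_0$ becomes a genuine coordinate $\gamma$ and then works with $k-1$ free variables and the sets $\mathcal{U}_q^{k-1}(s;\gamma)$, whereas you stay in the original coordinates, keep all $k$ variables free, and enforce $u_0=s$ via the indicator $\tfrac1q\sum_r\psi_r(\sum_i u_i-s)$; after one substitution ($r\mapsto -\gamma$, $s\mapsto a$) your intermediate sum $\sum_{s\neq 0}\sum_{r\neq 0}\zeta_q^{-rs}(1+\zeta_q^{rs})^k$ is literally the paper's $\sum_{\gamma\neq 0}\sum_{a\neq 0}\psi_\gamma(a)(1+\psi_{-\gamma}(a))^k$.
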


\begin{proof}
  For notational simplicity, we henceforth replace $\widetilde{N}_{k}$ and $\widetilde{Z}_{k}$ by the unimodularly row-equivalent and signed-column-permuted matrices $Q_k\widetilde{N}_{k}P_{N_k}D_{N_k}$ and $Q_k\widetilde{Z}_{k}P_{Z_k}D_{Z_k}$ as shown in Equation \eqref{eq-equiv-Nk} and \eqref{eq-equiv-Zk}, and denote them again by $\widetilde{N}_{k}$ and $\widetilde{Z}_{k}$.
  Then, for every $q \in \mathbb{Z}_{>0}$, let $\mathcal{N}_{k}(q)$ and $\mathcal{Z}_{k}(q)$ be
  the $\mathbb{Z}_{q}$-code generated by $\widetilde{N}_{k}$ and $\widetilde{Z}_{k}$, respectively.
  We label the columns of $\widetilde{N}_{k}$ from left to right by $x_1,\dots,x_k,\; y_1,\dots,y_k$,
  and write $E= \{x_1,\dots,x_k,\,y_1,\dots,y_k\}$ for the resulting ground set.
  For $\widetilde{Z}_{k}$, we label the additional (last) column by $t$ and set
  $\overline{E} \coloneq E \sqcup \{ t \}$ so that the columns of $\widetilde{Z}_{k}$ are indexed by
  $x_1,\dots,x_k,\,y_1,\dots,y_k,\,t$ in this order.
  
  Consider $\widetilde{Z}_{k}$ first. From the above observation, it suffices to count the number $A_{\widetilde{Z}_{k}, 2k+1}(q)$ of codewords having full weight.
  Note that all codewords ${c} = (c_{x_{1}}, \dots, c_{x_{k}}, c_{y_{1}}, \dots, c_{y_{k}}, c_{t}) \in \mathcal{Z}_{k}(q)$ are of the form
  \begin{equation*}
    {c} = {u}\widetilde{Z}_{k} = (u_{1}, \dots, u_{k-1}, \gamma)
    \bordermatrix{
      & x_{1} \dots x_{k-1} & x_{k} & y_{1} \dots y_{k-1} & y_{k} & t \cr
      & I_{k - 1} & {1}_{k-1} & I_{k-1} & {1}_{k-1} & {0}_{k-1} \cr
      & {0}_{k-1}^{\top} & 0 & {1}_{k-1}^{\top} & 1 & 1
    }
  \end{equation*}
  for some ${u} \coloneq (u_{1}, \dots, u_{k - 1}, \gamma) \in \mathbb{Z}_{q}^{k}$.
  Then, we have
  \begin{equation*}
    \left\{
      \begin{aligned}
        c_{x_{i}} &= u_{i} & &\; \text{for all } i \in \{ 1, \dots, k - 1 \}, &\;
        c_{x_{k}} &= \sum_{i=1}^{k-1} u_{i}, \\
        c_{y_{i}} &= u_{i} + \gamma & &\;\text{for all } i \in \{ 1, \dots, k - 1 \}, &\; 
        c_{y_{k}} &= \sum_{i=1}^{k - 1} u_{i} + \gamma, & c_{t} &= \gamma.
      \end{aligned}
    \right.
  \end{equation*}
  Thus $\supp({c})=\overline{E}$ if and only if $\gamma\neq 0$,
  $u_i\notin\{0,-\gamma\}$ for all $i\in\{1,\dots,k-1\}$, and
  $\sum_{i=1}^{k-1}u_i\notin\{0,-\gamma\}$.
 Consequently, for $s \in \mathbb{Z}_{q}$ and $\gamma \in \mathbb{Z}_{q} \setminus \{ 0 \}$, set
  \begin{equation*}
    \mathcal{U}_{q}^{k - 1}(s; \gamma) \coloneq \left\{
      (u_{1}, \dots, u_{k-1}) \in (\mathbb{Z}_{q} \setminus \{ 0, -\gamma \})^{k-1}
      \mathrel{}\middle|\mathrel{}
      \sum_{i=1}^{k - 1} u_{i} = s
    \right\}.
  \end{equation*}
  Then the number of codewords that have full weight is counted as
  \begin{equation*}
    A_{\widetilde{Z}_{k}, 2k+1}(q) = \sum_{\gamma \in \mathbb{Z}_{q} \setminus \{ 0 \}}
    \left(
      \abs{\mathbb{Z}_{q} \setminus \{ 0, -\gamma \}}^{k - 1}
      - \abs{\mathcal{U}_{q}^{k-1}(0; \gamma)}
      - \abs{\mathcal{U}_{q}^{k - 1}(-\gamma; \gamma)}
    \right).
  \end{equation*}
  For $s \in \mathbb{Z}_{q}$ and $\gamma \in \mathbb{Z}_{q} \setminus \{ 0 \}$, by applying the orthogonality relation repeatedly, we obtain
  \begin{align*}
    \abs{\mathcal{U}_{q}^{k - 1}(s; \gamma)}
    &= \sum_{u_{1}, \dots, u_{k-1} \in \mathbb{Z}_{q} \setminus \{ 0, -\gamma \}} \frac{1}{q} \sum_{a \in \mathbb{Z}_{q}} \psi_{u_{1} + \dots + u_{k-1} - s}(a)
    = \sum_{u_{1}, \dots, u_{k-1} \in \mathbb{Z}_{q} \setminus \{ 0, -\gamma \}} \frac{1}{q} \sum_{a \in \mathbb{Z}_{q}} \psi_{-s}(a) \prod_{i = 1}^{k - 1} \psi_{u_{i}}(a) \\
    &= \frac{1}{q} \sum_{a \in \mathbb{Z}_{q}} \psi_{-s}(a) \prod_{i = 1}^{k-1} \left( \sum_{u_{i} \in \mathbb{Z}_{q} \setminus \{ 0, -\gamma \}} \psi_{u_{i}}(a) \right)
    = \frac{1}{q} \sum_{a \in \mathbb{Z}_{q}} \psi_{-s}(a) \left( \sum_{r \in \mathbb{Z}_{q} \setminus \{ 0, -\gamma \}} \psi_{r}(a) \right)^{k - 1}.
  \end{align*}
  Similarly, we evaluate the inner sum as follows:
  \begin{equation*}
    \sum_{r \in \mathbb{Z}_{q} \setminus \{ 0, -\gamma \}} \psi_{r}(a)
    = \left( \sum_{r \in \mathbb{Z}_{q}} \psi_{r}(a) \right) - \psi_{0}(a) - \psi_{-\gamma}(a)
    = \begin{cases}
      q - 2 & \text{if } a = 0, \\
      -(1 + \psi_{-\gamma}(a)) & \text{if } a \in \mathbb{Z}_{q} \setminus \{ 0 \}.
    \end{cases}
  \end{equation*}
  Consequently,
  \begin{equation*}
    \abs{U_{q}^{k-1}(s; \gamma)} = \frac{1}{q} \left[
      (q - 2)^{k - 1} + (-1)^{k-1} \sum_{a \in \mathbb{Z}_{q} \setminus \{ 0 \}} \psi_{-s}(a)(1 + \psi_{-\gamma}(a))^{k - 1}
    \right].
  \end{equation*}
  Combining this with the definition of $\mathcal{U}_q^{k-1}(s;\gamma)$, we get
  \begin{multline*}
    \abs{\mathbb{Z}_{q}\setminus\{0,-\gamma\}}^{k-1}
    -\abs{\mathcal{U}_{q}^{k-1}(0;\gamma)}-\abs{\mathcal{U}_{q}^{k-1}(-\gamma;\gamma)} \\
    = \frac{1}{q}\Bigl[
    q(q-2)^{k-1}-2(q-2)^{k-1} 
    -(-1)^{k-1}\sum_{a\in\mathbb{Z}_q\setminus\{0\}}
    \Bigl(\psi_{0}(a)+\psi_{\gamma}(a)\Bigr)\bigl(1+\psi_{-\gamma}(a)\bigr)^{k-1}
    \Bigr].
  \end{multline*}
  Since $\psi_{0}(a) + \psi_{\gamma}(a) = 1 + \zeta_{q}^{\gamma a} = \zeta_{q}^{\gamma a}(1 + \zeta_{q}^{-\gamma a}) = \psi_{\gamma}(a)(1 + \psi_{-\gamma}(a))$, it follows that
  \begin{align*}
    A_{\widetilde{Z}_{k}, 2k+1}(q) = \frac{1}{q} \left[
      (q - 1)(q - 2)^{k} + (-1)^{k} \sum_{\gamma \in \mathbb{Z}_{q} \setminus \{ 0 \}} \sum_{a \in \mathbb{Z}_{q} \setminus \{ 0 \}} \psi_{\gamma}(a)(1 + \psi_{-\gamma}(a))^{k}
    \right].
  \end{align*}
  Applying the binomial theorem and using
  $\psi_{\gamma}(a)\psi_{-\gamma}(a)^j=\psi_{-\gamma}(a(j-1))$,
  we obtain
  \begin{align*}
    &\sum_{\gamma \in \mathbb{Z}_{q} \setminus \{ 0 \}} \sum_{a \in \mathbb{Z}_{q} \setminus \{ 0 \}} \psi_{\gamma}(a)(1 + \psi_{-\gamma}(a))^{k}
    = \sum_{\gamma \in \mathbb{Z}_{q} \setminus \{ 0 \}} \sum_{a \in \mathbb{Z}_{q} \setminus \{ 0 \}} \psi_{\gamma}(a) \sum_{j = 0}^{k} \binom{k}{j} \psi_{-\gamma}(a)^{j} \\
    ={}& \sum_{\gamma \in \mathbb{Z}_{q} \setminus \{ 0 \}} \sum_{a \in \mathbb{Z}_{q} \setminus \{ 0 \}} \sum_{j=0}^{k} \binom{k}{j} \psi_{-\gamma}(a(j - 1))
    = \sum_{j = 0}^{k} \binom{k}{j} \sum_{\gamma \in \mathbb{Z}_{q} \setminus \{ 0 \}} \sum_{a \in \mathbb{Z}_{q} \setminus \{ 0 \}} \psi_{-\gamma}(a(j - 1)).
  \end{align*}
  By applying the orthogonality relation again, we deduce
  \begin{align*}
    \sum_{a \in \mathbb{Z}_{q} \setminus \{ 0 \}} \psi_{-\gamma}(a(j - 1))
    &= \sum_{a \in \mathbb{Z}_{q} \setminus \{ 0 \}} \psi_{-\gamma(j - 1)}(a)
    = \sum_{a \in \mathbb{Z}_{q}} \psi_{-\gamma(j - 1)}(a) - \psi_{-\gamma(j - 1)}(0) \\
    &= \begin{cases}
      q - 1 & \text{if } -\gamma(j - 1) \equiv 0 \pmod{q}, \\
      -1 & \text{if } -\gamma(j - 1) \not\equiv 0 \pmod{q}.
    \end{cases}
  \end{align*}
  Set $m_{j - 1} \coloneq \gcd(j - 1, q)$, and then let $\overline{j - 1} \coloneq (j - 1)/m_{j-1}$ and $\overline{q} \coloneq q/m_{j-1}$. Note that
  \begin{equation*}
    -\gamma(j - 1) \equiv 0 \pmod{q}
    \iff \gamma \cdot \overline{j - 1} \equiv 0 \pmod{\overline{q}} 
    \iff \gamma \equiv 0 \pmod{\overline{q}}.
  \end{equation*}
  Then, the number of $\gamma \in \mathbb{Z}_{q} \setminus \{ 0 \}$ that satisfies $\gamma \equiv 0 \pmod{\overline{q}}$ is $m_{j - 1} - 1$, that is, $\overline{q}, 2\overline{q}, \dots, (m_{j-1} - 1) \overline{q}$.
  The number of the remaining elements is $q - 1 - (m_{j - 1} - 1) = q - m_{j - 1}$, and they satisfy $\gamma(j - 1) \not\equiv 0 \pmod{q}$. Consequently,
  \begin{equation*}
    \sum_{\gamma \in \mathbb{Z}_{q} \setminus \{ 0 \}} \sum_{a \in \mathbb{Z}_{q} \setminus \{ 0 \}} \psi_{-\gamma}(a(j - 1))
    = (m_{j - 1} - 1)(q - 1) + (q - m_{j - 1}) \cdot (-1) 
    = q\gcd(j - 1, q) - 2q + 1.
  \end{equation*}
  Therefore, we have
  \begin{align*}
    \chi_{\widetilde{Z}_{k}}^{\mathrm{quasi}}(q) &= A_{\widetilde{Z}_{k}, 2k+1}(q)
     = \frac{1}{q}\left[
      (q - 1)(q - 2)^{k}
      + (-1)^{k} \sum_{j = 0}^{k} \binom{k}{j} \bigl(q\gcd(j - 1, q) - 2q + 1\bigr)
     \right]\\
     &=\frac{(q - 1)(q - 2)^{k}}{q}+(-1)^{k}\sum_{j = 0}^{k} \binom{k}{j}\gcd(j - 1, q)-\frac{(-1)^{k}(2q - 1)}{q}\sum_{j = 0}^{k} \binom{k}{j}\\
     &=\frac{(q - 1)(q - 2)^{k}-(-2)^{k}(2q - 1)}{q}+(-1)^{k}\sum_{j = 0}^{k} \binom{k}{j}\gcd(j - 1, q),
  \end{align*}
  as required.

  For $\widetilde{N}_k$, we proceed as in the case of $\widetilde{Z}_k$, with the only difference being that
  the parameter $\gamma$ may now be zero (since there is no coordinate $t$).
  When $\gamma=0$, we have $\supp({c})=E$ if and only if $u_i\neq 0$
  for all $i \in [k-1]$ and $\sum_{i=1}^{k-1}u_i\neq 0$.
  The number of such codewords is given by
  the characteristic polynomial of the uniform matroid $U_{k-1,k}$
  (equivalently, the reduced chromatic polynomial of the cycle graph $C_k$), namely
  $\frac{(q-1)^k+(-1)^k(q-1)}{q}$ (see \cite[Example 7.2.2]{White1987}).
  Therefore,
  \begin{equation*}
    \chi_{\widetilde{N}_{k}}^{\mathrm{quasi}}(q)
    =A_{\widetilde{N}_{k}, 2k}(q) =\frac{(q-1)^k+(-1)^k(q-1)}{q} + A_{\widetilde{Z}_{k}, 2k+1}(q),
  \end{equation*}
  which completes the proof.
\end{proof}

As an immediate consequence of Theorem~\ref{thm-char-quasi-for-Nk-and-Zk}, we can read off the minimum periods of the resulting quasi-polynomials.

\begin{cor}\label{cor-period-for-Nk-and-Zk}
  The weight enumerators $W_{\widetilde{N}_{k}}(x, y; q)$ and $W_{\widetilde{W}_{k}}(x, y; q)$ have the same minimum period
  $\rho_0=\lcm(1,2,\dots,k-1)$.
\end{cor}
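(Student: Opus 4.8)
The plan is to identify the minimum period of the weight enumerators with the minimum period of the explicit characteristic quasi-polynomials produced in Theorem~\ref{thm-char-quasi-for-Nk-and-Zk}. By the corollary following Proposition~\ref{prop-quasipoly-AGi}, for any integer matrix $G$ the weight enumerator $W_G(x,y;-)$ has minimum period equal to the lcm period $\rho_0$ of $G$; and since the arrangements $\mathcal{A}_q$ associated with $\widetilde{N}_k$ and $\widetilde{Z}_k$ are central (every hyperplane contains the origin), Theorem~\ref{thm-charquasi-HTY} tells us that $\rho_0$ is also exactly the minimum period of $\chi_G^{\mathrm{quasi}}$. Hence it suffices to show that $\chi_{\widetilde{N}_k}^{\mathrm{quasi}}$ and $\chi_{\widetilde{Z}_k}^{\mathrm{quasi}}$ both have minimum period $\lcm(1,2,\dots,k-1)$.

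First I would separate the polynomial and the genuinely quasi parts of the formulas in Theorem~\ref{thm-char-quasi-for-Nk-and-Zk}. In the sum $\sum_{j=0}^{k}\binom{k}{j}\gcd(j-1,q)$ the terms $j=0,1,2$ equal $1$, $kq$, and $\binom{k}{2}$ respectively (using $\gcd(-1,q)=1$, $\gcd(0,q)=q$, $\gcd(1,q)=1$), and the prefactor $\frac{(q-1)(q-2)^k-(-2)^k(2q-1)}{q}$ is a polynomial in $q$ because its numerator is divisible by $q$; likewise $\frac{(q-1)^k+(-1)^k(q-1)}{q}$ is a polynomial. Re-indexing $j=d+1$, this yields, for $G\in\{\widetilde{N}_k,\widetilde{Z}_k\}$,
\begin{equation*}
  \chi_G^{\mathrm{quasi}}(q)=A_G(q)+\sum_{d=2}^{k-1}(-1)^k\binom{k}{d+1}\gcd(d,q)
\end{equation*}
for suitable $A_G\in\mathbb{Q}[q]$; crucially, the periodic contribution is the same for $\widetilde{N}_k$ and $\widetilde{Z}_k$.

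The heart of the matter is then an elementary lemma I would prove on its own: if $A\in\mathbb{Q}[t]$ and $S$ is a finite set of integers $\ge 2$ with nonzero real coefficients $c_d$ ($d\in S$) all of the same sign, then the quasi-polynomial $q\mapsto A(q)+\sum_{d\in S}c_d\gcd(d,q)$ has minimum period $\lcm(S)$ (with the convention $\lcm(\emptyset)=1$). The function $h(q):=\sum_{d\in S}c_d\gcd(d,q)$ is honestly periodic of period $\lcm(S)$; since $h$ is bounded and $A$ is a polynomial, a polynomial bounded on an arithmetic progression is constant, so $\rho$ is a period of the quasi-polynomial $A+h$ if and only if $h$ is $\rho$-periodic in the usual sense. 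It remains to show $\lcm(S)$ is the minimum period of $h$: it is a period, and if some period $\rho$ failed to be divisible by $\lcm(S)$, one picks a prime $p$ with $a:=v_p(\rho)<b:=\max_{d\in S}v_p(d)$ (where $v_p$ is $p$-adic valuation), writes $\rho=p^a\rho'$ with $p\nmid\rho'$, and compares $q_1:=p^a\rho'$ with $q_2:=p^b\rho'$. Then $q_1-q_2=\rho(1-p^{\,b-a})$ is a multiple of $\rho$, while the identity $\gcd(d,p^j\rho')=p^{\min(v_p(d),j)}\gcd(d/p^{v_p(d)},\rho')$ gives
\begin{equation*}
  h(q_1)-h(q_2)=\sum_{\substack{d\in S\\ v_p(d)>a}}c_d\bigl(p^a-p^{v_p(d)}\bigr)\gcd\!\bigl(d/p^{v_p(d)},\rho'\bigr)\neq 0,
\end{equation*}
since all summands have the same sign and the set $\{d\in S:v_p(d)>a\}$ contains a $d$ with $v_p(d)=b$. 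This contradicts the $\rho$-periodicity of $h$, so $\lcm(S)\mid\rho$.

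Finally I would apply the lemma with $S=\{2,3,\dots,k-1\}$ and $c_d=(-1)^k\binom{k}{d+1}$ (all of sign $(-1)^k$, hence of a common sign, and all nonzero): the minimum period of $\chi_G^{\mathrm{quasi}}$, and therefore of $W_G(x,y;-)$, is $\lcm(2,3,\dots,k-1)=\lcm(1,2,\dots,k-1)$ for both $G=\widetilde{N}_k$ and $G=\widetilde{Z}_k$, the case $k=2$ being the degenerate one where $S=\emptyset$ and the period is $1$. The one genuinely substantive step is the minimum-period lower bound in the lemma, namely the $p$-adic construction of a pair $q_1\equiv q_2\pmod\rho$ with $h(q_1)\neq h(q_2)$; the reduction to $\chi_G^{\mathrm{quasi}}$ and the separation of the polynomial part from the $\gcd$-sum are routine bookkeeping.
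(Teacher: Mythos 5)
Your proof is correct, and you take the same overall route as the paper for the reduction steps: both reduce the period of $W_G$ to the period of $\chi_G^{\mathrm{quasi}}$, both observe the periodic contribution to $\chi_{\widetilde N_k}^{\mathrm{quasi}}$ and $\chi_{\widetilde Z_k}^{\mathrm{quasi}}$ is identical, and both peel off the polynomial part (including the $j=0,1,2$ terms of the binomial--gcd sum) so that only $\sum_{d=2}^{k-1}(-1)^k\binom{k}{d+1}\gcd(d,q)$ remains to analyze. Where you diverge is the lower bound on the minimum period. The paper's proof dispatches it in one line by observing that the gcd-sum (with positive coefficients) is maximized precisely when $\lcm(1,\dots,k-1)\mid q$, so any period $\rho'$ dividing $\lcm(1,\dots,k-1)$ and admitting the value $q=\rho'$ in the residue class $0$ would also have to attain the maximum there, forcing $\lcm(1,\dots,k-1)\mid\rho'$. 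You instead prove a freestanding lemma: for any polynomial $A$ plus a gcd-sum $\sum_{d\in S}c_d\gcd(d,q)$ with nonzero same-sign coefficients, the minimum period is $\lcm(S)$, established by a $p$-adic construction of a pair $q_1=\rho$, $q_2=p^{\,b-a}\rho$ with $q_1\equiv q_2\pmod\rho$ but $h(q_1)\neq h(q_2)$. This is slightly longer and requires the intermediate observation that adding a polynomial cannot shrink the period (a bounded polynomial on an arithmetic progression is constant), but it is more explicit — it exhibits the obstruction concretely rather than via an extremality argument — and it isolates the general principle as a reusable lemma. Both approaches use the same-sign hypothesis in an essential way (the paper to guarantee a unique maximizing residue class, you to guarantee the cancellation-free sum in the $p$-adic comparison), and both handle the $k=2$ degeneracy correctly.
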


\begin{proof}
  It is sufficient to prove that the characteristic quasi-polynomials
  $\chi_{\widetilde{N}_{k}}^{\mathrm{quasi}}(q)$ and 
  $\chi_{\widetilde{Z}_{k}}^{\mathrm{quasi}}(q)$ have
  the minimum period $\lcm(1,2,\dots,k-1)$.
  In the formulas of Theorem~\ref{thm-char-quasi-for-Nk-and-Zk},
  the dependence on the residue classes of $q$ arises only through the terms $\gcd(j-1,q)$.
  (Here the case $j=1$ gives $\gcd(0,q)=q$, which contributes only polynomial terms
  and hence does not affect the period.)
  For each integer $m \ge 1$, the function $q \mapsto \gcd(m,q)$ is periodic with period $m$;
  therefore, $\lcm(1,2,\dots,k-1)$ is a period of both quasi-polynomials.

  Since the term $\sum_{j = 0}^{k} \binom{k}{j}\gcd(j - 1, q)$ is maximized if and only if $\lcm(1,2,\dots,k-1)$ divides $q$, the minimum period is $\lcm(1,2,\dots,k-1)$.
\end{proof}

\section{Appendix: sample computations}\label{sec-examples}
In this section, we list examples of weight enumerators for notable codes.
SageMath~\cite{SageMath} computer programming is used for calculations.
In all the following examples, when $\gcd(q, \rho_0) = 1$, the nonzero coefficients $A_{G,i}(q)$ ($i\neq 0$) are divisible by $q - 1$.
This corresponds to the fact that $C_G(1)=\{0\}$ for any $G\in\Mat_{k\times n}(\mathbb{Z})$.

\begin{example}\label{ex-2004}
Let
\begin{equation*}
  G=\begin{pmatrix}
    2 & 0\\
    0 & 4
  \end{pmatrix}.
\end{equation*}
Then $\rho_{0} = 4$, and
\begin{align*}
    W_G(x,y; q) =
    \left\{
    \begin{aligned}
    &\begin{multlined}
    x^{2} + 2(q - 1)xy + (q - 1)^2y^{2}
    \end{multlined}
    & \quad\text{if } \gcd(q,\rho_0)=1,\\[0.5em]
    &\begin{multlined}
    x^{2} + (q - 2)xy + \frac{1}{4}(q - 2)^2y^{2}
    \end{multlined}
    & \quad\text{if } \gcd(q,\rho_0)=2,\\[0.5em]
    &\begin{multlined}
    x^{2} + \frac{1}{4}(3q - 8)xy + \frac{1}{8}(q - 2)(q - 4)y^{2}
    \end{multlined}& \quad \text{if } \gcd(q,\rho_0)=4.
    \end{aligned}
    \right.
\end{align*}
\end{example}
We consider the case where $m=4$.
There are no integer solutions for the coefficient of $xy$ in $W_G(x,y; q)$.
This implies that $d_q=d_m$ for any $q\in\mathbb{Z}_{>0}$ with $\gcd(q,\rho_0)=m$.
On the other hand, since the last elementary divisor is $e_r=4$, we have $\gcd(m,e_r)=m$.
Therefore, Example \ref{ex-2004} is a counterexample to the converse of Theorem \ref{thm-sufficient-necessary-mindist} \textup{(2)}.

\begin{example}[A generator matrix of $\lbrack 7,4 \rbrack$ Hamming code over $\mathbb{Z}_2$]
Let
\begin{equation*}
  G=\begin{pmatrix}
    1 & 0 & 0 & 0 & 0 & 1 & 1 \\
    0 & 1 & 0 & 0 & 1 & 0 & 1 \\
    0 & 0 & 1 & 0 & 1 & 1 & 0 \\
    0 & 0 & 0 & 1 & 1 & 1 & 1
  \end{pmatrix}.
\end{equation*}
Then $\rho_{0} = 2$, and
\begin{align*}
    W_G(x,y; q) =
    \left\{
    \begin{aligned}
    &\begin{aligned}
    &x^{7}+ 6(q - 1)x^{4}y^{3}+ 11(q - 1)x^{3}y^{4}  + 3(q - 1)(7q - 16)x^{2}y^{5}\\
    &+(q - 1)(7q^{2} - 35q + 46)xy^{6} +(q - 1)(q - 2)(q^{2} - 4q + 7)y^{7}
    \end{aligned}
    & \quad\text{if } \gcd(q,\rho_0)=1,\\[0.5em]
    &\begin{aligned}
    &x^{7} + (6q - 5)x^{4}y^{3} + (11q - 15)x^{3}y^{4} + 3(q - 2)(7q - 9)x^{2}y^{5}\\
    &+ (q - 2)(7q^{2} - 28q + 25)xy^{6} + (q^{4} - 7q^{3} + 21q^{2} - 29q + 15)y^{7}
    \end{aligned}& \quad \text{if } \gcd(q,\rho_0)=2.
    \end{aligned}
    \right.
\end{align*}
\end{example}

\begin{example}[A generator matrix of $\lbrack 8,4 \rbrack$ Extended Hamming code over $\mathbb{Z}_2$] \label{ex-exham}
Let
  \begin{equation*}
    G = \widetilde{N}_{4}= \begin{pmatrix}
      1 & 0 & 0 & 0 & 0 & 1 & 1 & 1 \\
      0 & 1 & 0 & 0 & 1 & 0 & 1 & 1 \\
      0 & 0 & 1 & 0 & 1 & 1 & 0 & 1 \\
      0 & 0 & 0 & 1 & 1 & 1 & 1 & 0
    \end{pmatrix}.
  \end{equation*}
  Then $\rho_{0} = 6$, and the weight enumerators are listed in Table~\ref{table-exham}.
  \begin{table}[H]
    \centering
    \begin{tabular}{c|l}
    $\gcd(q, \rho_{0})$ & $W_{G}(x, y; q)$ \\ \hline
    $1$ &
    \begin{minipage}{0.5\hsize}
    \vspace{-0.7em}
      {\begin{align*}
        &x^{8}+ 10(q - 1)x^{4}y^{4} + 16(q - 1)x^{3}y^{5} + 4(q - 1)(7q - 20)x^{2}y^{6}\\
        &+ 8(q - 1)(q^{2} - 6q + 10)xy^{7} + (q - 1)(q^{3} - 7q^{2} + 21q - 25)y^{8}
      \end{align*}}\mbox{}
      \vspace{-1.4em}
    \end{minipage}\\ \hline
    $2$ & 
    \begin{minipage}{0.5\hsize}
    \vspace{-0.7em}
      {\begin{align*}
        &x^{8} + 2(5q - 3)x^{4}y^{4} + 16(q - 2)x^{3}y^{5} + 4(q - 2)(7q - 13)x^{2}y^{6} \\
        &+ 8(q - 2)^{2}(q - 3)xy^{7} + (q^{4} - 8q^{3} + 28q^{2} - 46q + 29)y^{8}
      \end{align*}}\mbox{}
      \vspace{-1.4em}
    \end{minipage} \\ \hline
    $3$ &
    \begin{minipage}{0.5\hsize}
    \vspace{-0.7em}
      {\begin{align*}
        &x^{8} +  2(5q - 4)x^{4}y^{4} + 8(2q - 3)x^{3}y^{5} + 4(7q^{2} - 27q + 23)x^{2}y^{6} \\
        &+ 8(q^{3} - 7q^{2} + 16q - 11)xy^{7} + (q^{4} - 8q^{3} + 28q^{2} -46q + 27)y^{8}
      \end{align*}}\mbox{}
      \vspace{-1.4em}
    \end{minipage} \\ \hline
    $4$ &
    \begin{minipage}{0.5\hsize}
    \vspace{-0.7em}
      {\begin{align*}
        &x^{8} + 2(5q - 2)x^{4}y^{4} + 8(2q - 5)x^{3}y^{5} + 4(7q^{2} - 27q + 29)x^{2}y^{6} \\
        &+ 8(q^{3} - 7q^{2} + 16q - 13)xy^{7} + (q^{4} - 8q^{3} + 28q^{2} -46q + 31)y^{8}
      \end{align*}}\mbox{}
      \vspace{-1.4em}
    \end{minipage} \\
    \end{tabular}
    \caption{The weight enumerators in Example \ref{ex-exham} for each $q\in \mathbb{Z}_{>0}$.} \label{table-exham}
  \end{table}
\end{example}
It is well known that when $q=2$, the weights of the codewords of the $[8,4]$ extended Hamming code can only be $1$, $4$, or $8$.
On the other hand, if we consider the weight enumerator as a quasi-polynomial, then the coefficients of $x^{8-i} y^i$ ($4 < i < 8$) in the $2$-constituent are all nonzero polynomials and divisible by $q-2$.

\begin{example}[A generator matrix of $\lbrack 7,3 \rbrack$ Simplex code over $\mathbb{Z}_2$]
Let
\begin{equation*}
  G= \widetilde{Z}_{3} = \begin{pmatrix}
    1 & 0 & 0 & 0 & 1 & 1 & 1 \\
    0 & 1 & 0 & 1 & 0 & 1 & 1 \\
    0 & 0 & 1 & 1 & 1 & 0 & 1
  \end{pmatrix}.
\end{equation*}
Then $\rho_{0} = 2$, and
\begin{align*}
    W_G(x,y; q) =
    \left\{
    \begin{aligned}
    &\begin{multlined}
    x^{7} + 6(q - 1)x^{3}y^{4} + 3(q - 1)x^{2}y^{5} \\
    + (q - 1)(7q - 17)xy^{6} + (q - 1)(q - 3)^{2}y^{7}  
    \end{multlined}
    & \quad\text{if } \gcd(q,\rho_0)=1,\\[0.5em]
    &\begin{multlined}
    x^{7} + (6q - 5)x^{3} y^{4} + 3(q - 2)x^{2} y^{5} \\
    + (q - 2)(7q - 10)xy^{6} + (q - 2)(q^{2} - 5q + 5)y^{7}
    \end{multlined}& \quad \text{if } \gcd(q,\rho_0)=2.
    \end{aligned}
    \right.
\end{align*}
\end{example}

\begin{example}[A generator matrix of Kerdock code $\mathcal{K}(2)$~\cite{HKCSS94}]\label{ex-kerdock2}
Let
\begin{equation*}
  G=\begin{pmatrix}
    3 & 1 & 0 & 0 \\
    3 & 0 & 1 & 0 \\
    3 & 0 & 0 & 1
  \end{pmatrix}.
\end{equation*}
Then $\rho_{0} = 3$, and
\begin{align*}
    W_G(x,y; q) =
    \left\{
    \begin{aligned}
    &\begin{multlined}
    x^{4} + 6(q - 1)x^{2}y^{2} + 4(q - 1)(q - 2)xy^{3} \\
    +(q-1)(q^{2} -3q + 3)y^{4}
    \end{multlined}
    & \quad\text{if } \gcd(q,\rho_0)=1,\\[0.5em]
    &\begin{multlined}
    x^{4} + 6x^{3}y + 12(q - 2)x^{2}y^{2} + 2(3q^{2} - 12q + 13)xy^{3} \\ 
    + (q -3)(q^{2} - 3q + 3)y^{4}
    \end{multlined}& \quad \text{if } \gcd(q,\rho_0)=3.
    \end{aligned}
    \right.
\end{align*}
\end{example}

\begin{example}[The incidence matrix of $4$-cycle graph]\label{ex-4cycle}
Let
\begin{equation*}
  G= \begin{pmatrix}
    -1 & 1 & 0 & 0 \\
    -1 & 0 & 1 & 0 \\
    -1 & 0 & 0 & 1
  \end{pmatrix}.
\end{equation*}
Then $\rho_{0} = 1$, and
\begin{equation*}
    W_G(x,y; q) =
    x^{4} + 6(q - 1)x^{2} y^{2} + 4(q - 1)(q - 2)xy^{3} + (q - 1)(q^{2} - 3q + 3)y^{4}
\end{equation*}
for any $q\in\mathbb{Z}_{>0}$.
\end{example}
Note that the generator matrices of Examples \ref{ex-kerdock2} and \ref{ex-4cycle} define the same code over $\mathbb{Z}_4$.

\begin{example}[A generator matrix of Kerdock code $\mathcal{K}_{4}$~\cite{HKCSS94}]\label{ex-Kerdoc4}
Let
\begin{equation*}
  G=\begin{pmatrix}
    1 & 1 & 1 & 1 \\
    0 & 2 & 0 & 2 \\
    0 & 0 & 2 & 2
  \end{pmatrix}.
\end{equation*}
Then $\rho_{0} = 2$, and
\begin{align*}
    W_G(x,y; q) =
    \left\{
    \begin{aligned}
    &\begin{multlined}
    x^{4} + 6(q - 1)x^{2} y^{2} + 4(q - 1)(q - 2)xy^{3} \\
    +(q - 1)(q^{2} - 3q + 3)y^{4}
    \end{multlined}
    & \quad\text{if } \gcd(q,\rho_0)=1,\\[0.5em]
    &\begin{multlined}
    x^{4} + 3(q - 2)x^{2}y^{2} + (q - 2)(q - 4)xy^{3} \\
    +\frac{1}{4}(q^{3} - 4q^{2} + 12q - 12)y^{4}
    \end{multlined}& \quad \text{if } \gcd(q,\rho_0)=2
    \end{aligned}
    \right.
\end{align*}
\end{example}
We consider the case where $m=2$.
Although $m$ does not divide the first elementary divisor $e_1=1$, we have $4=d_2>d_4=2$.
Therefore, Example \ref{ex-Kerdoc4} is a counterexample to the converse of Theorem \ref{thm-sufficient-necessary-mindist} \textup{(3)}.

\begin{example}[{A representation of the matroid $P_{8}$~\cite{Oxley1986}}]\label{ex-P8}
Let
\begin{equation*}
  G=\begin{pmatrix}
    1 & 0 & 0 & 0 & 2 & 1 & 1 & 0 \\
    0 & 1 & 0 & 0 & 1 & 1 & 0 & 1 \\
    0 & 0 & 1 & 0 & 1 & 0 & 1 & 1 \\
    0 & 0 & 0 & 1 & 0 & 1 & 1 & 2
  \end{pmatrix}.
\end{equation*}
Then $\rho_{0} = 4$, and the weight enumerators are listed in Table \ref{table-P8}.

\begin{table}[H]
  \centering
  {\begin{tabular}{c|l}
    $\gcd(q, \rho_{0})$ & $W_{G}(x, y; q)$ \\ \hline
    $1$ &
    \begin{minipage}{0.6\hsize}
    \vspace{-0.7em}
    {\begin{align*}
        &x^8 + 8(q - 1)(q^{2} - 6q + 10)xy^{7} + 10(q - 1)x^4 y^{4} +  16(q - 1)x^3 y^{5}\\
        &+ 4(q - 1)(7q - 20)x^2 y^{6} + (q - 1)(q^{3} - 7q^{2} + 21q - 25)y^{8}
      \end{align*}}\mbox{}
      \vspace{-1.4em}
    \end{minipage} \\ \hline
    $2$ &
    \begin{minipage}{0.6\hsize}
    \vspace{-0.7em}
      {\begin{align*}
        &x^8 + x^6 y^{2} + 4x^5 y^{3} + (10q - 17)x^4 y^{4} + 2(13q - 24)x^3 y^{5} + (29q^{2} - 138q + 163)x^2 y^{6}\\
        & + 2(q - 2)(4q^{2} - 21q + 37)x y^{7}
        + (q - 2)(q^{3} - 6q^{2} + 17q - 22)y^{8}
      \end{align*}}\mbox{}
      \vspace{-1.4em}
    \end{minipage} \\ \hline
    $4$ &
    \begin{minipage}{0.6\hsize}
    \vspace{-0.7em}
      {\begin{align*}
        &x^8 + x^6 y^{2} + 4x^5 y^{3} + (10q - 13)x^4 y^{4} + 2(13q - 32)x^3 y^{5} + (29q^{2} -138q + 187)x^2 y^{6}\\
        & + (8q^{3} - 58q^{2} + 158q - 164)x y^{7}
        + (q^{4} - 8q^{3} + 29q^{2} - 56q + 48)y^{8}
      \end{align*}}
    \end{minipage}
  \end{tabular}
  }
\caption{The weight enumerators in Example \ref{ex-P8} for each $q\in \mathbb{Z}_{>0}$.} \label{table-P8}
\end{table}
\end{example}

\begin{example}[$\widetilde{Z}_{5}$]\label{ex-Z5}
Let $G=\widetilde{Z}_{5}$. Then $\rho_{0} = 12$, and the weight enumerators are listed in Table \ref{table-Z5}.

\begin{table}[H]
  \centering
  \scalebox{0.95}{
  \begin{tabular}{c|l}
    $\gcd(q, \rho_{0})$ & $W_{G}(x, y; q)$ \\ \hline
    $1$ & 
    \begin{minipage}{0.5\hsize}
    \vspace{-0.7em}
      {\begin{align*}
        &x^{11} + 10(q - 1)x^{7}y^{4} + 5(q - 1)(2q - 3)x^{5}y^{6} + 55(q - 1)x^{4}y^{7} \\
        &+ 5(q - 1)(q^{2} + 13q - 51)x^{3}y^{8} + 10(q - 1)(4q^{2} - 24q + 43)x^{2}y^{9} \\
        &+ (q - 1)(11q^{3} - 84q^{2} + 246q - 299)xy^{10}
        + (q - 1)(q^{4} - 10q^{3} + 40q^{2} - 80q + 75)y^{11}
      \end{align*}}\mbox{}
      \vspace{-1.4em}
    \end{minipage} \\ \hline
    $2$ &
    \begin{minipage}{0.5\hsize}
    \vspace{-0.7em}
      {\begin{align*}
        &x^{11} + 10(q - 1)x^{7}y^{4} + (10q^{2} - 25q + 26)x^{5}y^{6} + 55(q - 2)x^{4}y^{7} \\
        &+ 5(q^{3} + 12q^{2} - 64q + 73)x^{3}y^{8} + 10(q - 2)(4q^{2} - 20q + 27)x^{2}y^{9} \\
        &+ (q - 2)(11q^{3} - 73q^{2} + 184q - 177)xy^{10} + (q - 2)(q^{4} - 9q^{3} + 32q^{2} - 56q + 43)xy^{11}
      \end{align*}}\mbox{}
      \vspace{-1.4em}
    \end{minipage} \\ \hline
    $3$ &
    \begin{minipage}{0.5\hsize}
    \vspace{-0.7em}
      {\begin{align*}
        &x^{11} + 10(q - 1)x^{7}y^{4} + 5(2q^{2} - 5q + 5)x^{5}y^{6} + 5(11q - 21)x^{4}y^{7} \\
        &+ 5(q^{3} + 12q^{2} - 64q + 71)x^{3}y^{8} + 10(4q^{3} - 28q^{2} + 67q - 53)x^{2}y^{9} \\
        &+ (11q^{4} - 95q^{3} + 330q^{2} - 525q + 349)xy^{10} + (q^{5} - 11q^{4} + 50q^{3} - 120q^{2} + 155q - 85)y^{11}
      \end{align*}}\mbox{}
      \vspace{-1.4em}
    \end{minipage}\\ \hline
    $4$ & 
    \begin{minipage}{0.5\hsize}
    \vspace{-0.7em}
      {\begin{align*}
        &x^{11} + 10(q - 1)x^{7}y^{4} + (10q^{2} - 25q + 28)x^{5}y^{6} + 5(11q - 24)x^{4}y^{7}\\
        &+ 5(q^{3} + 12q^{2} - 64q + 77)x^{3}y^{8} + 10(4q^{3} - 28q^{2} + 67q - 56)x^{2}y^{9} \\
        &+ (11q^{4} - 95q^{3} + 330q^{2} - 545q + 364)xy^{10} + (q^{5} - 11q^{4} + 50q^{3} - 120q^{2} + 155q - 88)y^{11}
      \end{align*}}\mbox{}
      \vspace{-1.4em}
    \end{minipage}\\ \hline
    $6$ & \begin{minipage}{0.5\hsize}
    \vspace{-0.7em}
      {\begin{align*}
        &x^{11} + 10(q - 1)x^{7}y^{4} + (10q^{2} - 25q + 36)x^{5}y^{6} + 5(11q - 32)x^{4}y^{7}\\
        &+ 5(q^{3} + 12q^{2} - 64q + 93)x^{3}y^{8} + 10(4q^{3} - 28q^{2} + 67q - 64)x^{2}y^{9} \\
        &+ (11q^{4} - 95q^{3} + 330q^{2} - 545q + 404)xy^{10} + (q^{5} - 11q^{4} + 50q^{3} - 120q^{2} + 155q - 96)y^{11}    
      \end{align*}}\mbox{}
      \vspace{-1.4em}
    \end{minipage}\\ \hline
    $12$ & \begin{minipage}{0.5\hsize}
    \vspace{-0.7em}
      {\begin{align*}
        &x^{11} + 10(q - 1)x^{7}y^{4} + (10q^{2} - 25q + 38)x^5y^{6} + 10(q - 1)x^7y^{4} + 5(11q - 34)x^4y^{7}\\
        &+ 5(q^{3} + 12q^{2} - 64q + 97)x^3y^{8} + 10(4q^{3} - 28q^{2} + 67q - 66)x^2y^{9}\\
        &+ (11q^{4} - 95q^{3} + 330q^{2} - 545q + 414)xy^{10} + (q^{5} - 11q^{4} + 50q^{3} - 120q^{2} + 155q - 98)y^{11}
      \end{align*}}
    \end{minipage}
  \end{tabular}
  }
\caption{The weight enumerators in Example \ref{ex-Z5} for each $q\in \mathbb{Z}_{>0}$.} \label{table-Z5}
\end{table}
\end{example}
In other examples, as $i$ increases, the degree in $q$ of the coefficient $A_{G,i}(q)$ of $x^{n-i}y^i$ also increases.
In contrast, in Example \ref{ex-Z5}, when $\gcd(q,\rho_0)\neq 1$, we observe a phenomenon in which the degree of the coefficients goes up and down: the coefficient of $x^7 y^4$ has degree $1$, the coefficient of $x^6 y^5$ is zero as a polynomial, the coefficient of $x^5 y^6$ has degree $2$, the coefficient of $x^4 y^7$ has degree $1$, etc.

\section*{Acknowledgements}
This work was supported by the Sumitomo Foundation Grant for Basic Science Research Projects (No.~2501013)
and the Japan Society for the Promotion of Science (JSPS) KAKENHI Grants JP25K17298.

\bibliography{reference}
\bibliographystyle{amsalpha}

\end{document}